\newcommand{\1}{\ensuremath{\mathbf{1}}}
\theoremstyle{plain} \newtheorem{theorem}{Theorem}[section]
\theoremstyle{plain} \newtheorem{proposition}[theorem]{Proposition}
\theoremstyle{plain} 
\theoremstyle{plain} \newtheorem{lemma}[theorem]{Lemma}
\theoremstyle{remark} \newtheorem{remark}[theorem]{Remark}
\newcounter{hypA}
\newenvironment{hypA}{\refstepcounter{hypA}\begin{itemize}
\item[{\bf A\arabic{hypA}}]}{\end{itemize}}
\newcommand{\osc}{\mathrm{osc}}
\newcommand{\eqdef}{\ensuremath{\stackrel{\mathrm{def}}{=}}}
\def\Xset{\mathbb{X}}
\def\Yset{\mathbb{Y}}
\def\Zset{\mathbb{Z}}
\newcommand{\pscal}[2]{\left\langle #1, #2 \right\rangle}
\newcommand{\K}{\mathcal{K}}
\newcommand{\limE}[2]{\mathsf{S}(#1,#2)}
\newcommand{\Rset}{\mathbb{R}}
\newcommand{\Nset}{\mathbb{N}}
\newcommand{\eqsp}{\;}
\newcommand{\jacG}{\Gamma}
\newcommand{\param}{\theta}
\newcommand{\paramset}{\Theta}
\newcommand{\rmd}{\mathrm{d}}
\newcommand{\rmL}{\mathrm{L}}
\newcommand{\bfy}{\mathbf{y}}
\newcommand{\bfY}{\mathbf{Y}}
\newcommand{\bfX}{\mathbf{X}}
\newcommand{\PP}{\mathbb{P}}
\newcommand{\Staset}{\mathcal{L}}
\newcommand{\loglik}[2]{\ell_{#2}^{#1}}
\newcommand{\CE}[4][]
{
\ifthenelse{\equal{#1}{}}{\mathbb{E}^{#2}_{#3}\left[#4\right]}{\mathbb{E}^{#2}_{#3}\left[#4\middle | #1\right]}
}
\newcommand{\lpnorm}[2]{\ensuremath{\left\| #1 \right\|_{#2}}}
\newcommand{\smoothfunc}[2]{\Phi_{#2}^{#1}}
\newcommand{\lyap}{\mathrm{W}}
\newcommand{\limT}[1]{\underset{#1\to +\infty}{\longrightarrow}}
\newcommand{\ps}[1]{#1\mathrm{-a.s.}}
\newcommand{\limEMmap}{\mathrm{G}}
\newcommand{\limEMmapt}{\mathrm{R}}
\newcommand{\mapS}{\bar{\mathrm{S}}}
\newcommand{\PPim}{\PP}
\newcommand{\Sset}{\mathcal{S}}
\newcommand{\shift}{\vartheta}
\newcommand{\averageparam}[1][]%
{\ifthenelse{\equal{#1}{}}{\ensuremath{\widetilde{\theta}}}{\ensuremath{\widetilde{\theta}}_{#1}}
}
\newcommand{\mix}[1][]%
{\ifthenelse{\equal{#1}{a}}{\alpha}{\beta}
}
\def\sigmaX{\mathcal{X}}
\def\sigmaY{\mathcal{Y}}
\newcommand{\adjfunc}[4][]
{\ifthenelse{\equal{#1}{}}{\ifthenelse{\equal{#4}{}}{\upsilon_{#2}}{\upsilon_{#2}(#4)}}
{\ifthenelse{\equal{#1}{smooth}}{\ifthenelse{\equal{#4}{}}{\tilde{\upsilon}_{#2}}{\tilde{\upsilon}_{#2}(#4)}}
{\ifthenelse{\equal{#1}{fully}}{\ifthenelse{\equal{#4}{}}{\upsilon^\star_{#2}}{\upsilon^\star_{#2}(#4)}}{\mathrm{erreur}}}}}
\newcommand{\kiss}[3][]
{\ifthenelse{\equal{#1}{}}{p_{#2}}
{\ifthenelse{\equal{#1}{fully}}{p^{\star}_{#2}}
{\ifthenelse{\equal{#1}{smooth}}{\tilde{r}_{#2}}{\mathrm{erreur}}}}}
\newcommand{\Kiss}[3][]
{\ifthenelse{\equal{#1}{}}{P_{#2}}
{\ifthenelse{\equal{#1}{fully}}{P^{\star}_{#2}}
{\ifthenelse{\equal{#1}{smooth}}{\tilde{R}_{#2}}{\mathrm{erreur}}}}}
\newcommand{\XinitIS}[2][]{\ifthenelse{\equal{#1}{}}{\ensuremath{\rho_{#2}}}{\ensuremath{\check{\rho}_{#2}}}}
\newcommand{\filt}[2][]%
{
\ifthenelse{\equal{#1}{}}{\ensuremath{\phi_{#2}}}%
{\ifthenelse{\equal{#1}{hat}}{\ensuremath{\phi^{N}_{#2}}}
{\ifthenelse{\equal{#1}{tilde}}{\ensuremath{\tilde{\phi}^{N}_{#2}}}
{\ifthenelse{\equal{#1}{tar}}{\ensuremath{\phi^{N,\mathrm{t}}_{#2}}}
{\ifthenelse{\equal{#1}{aux}}{\ensuremath{\phi^{N,\mathrm{a}}_{#2}}}
}
}
}
}
}
\newcommand{\M}{\ensuremath{M}}
\newcommand{\post}[3][]%
{
\ifthenelse{\equal{#1}{}}{\ensuremath{\phi_{#2|#3}}}%
{\ifthenelse{\equal{#1}{hat}}{\ensuremath{\phi^{N}_{#2|#3}}}
{\ifthenelse{\equal{#1}{tilde}}{\ensuremath{\tilde{\phi}^{N}_{#2|#3}}}
{\ifthenelse{\equal{#1}{tar}}{\ensuremath{\phi^{N,\mathrm{t}}_{#2|#3}}}}
}
}
}
\newcommand{\chunk}[4][]%
{\ifthenelse{\equal{#1}{}}{\ensuremath{{#2}_{#3:#4}}}{\ensuremath{#2^#1}_{#3:#4}}
}
\newcommand{\sumwght}[2][]{%
\ifthenelse{\equal{#1}{}}{\ensuremath{\Omega_{#2}}}{\ensuremath{\Omega_{#2}^{#1}}}}
\begin{document}
 
 \title{Online Expectation Maximization based algorithms for inference in Hidden Markov Models}
 \author{Sylvain Le Corff\,\footnote{LTCI, CNRS and TELECOM ParisTech, 46 rue Barrault 75634 Paris Cedex 13, France. sylvain.lecorff@telecom-paristech.fr}\;\,\footnote{This
    work is partially supported by the French National Research Agency, under
    the programs ANR-08-BLAN-0218 BigMC and ANR-07-ROBO-0002.}\, and Gersende Fort\,\footnote{LTCI, CNRS and TELECOM ParisTech, 46 rue Barrault 75634 Paris Cedex 13, France. gersende.fort@telecom-paristech.fr}}

\maketitle

\begin{abstract}
  The Expectation Maximization (EM) algorithm is a versatile tool for model
  parameter estimation in latent data models. When processing large data sets
  or data stream however, EM becomes intractable since it requires the whole
  data set to be available at each iteration of the algorithm.  In this
  contribution, a new generic online EM algorithm for model parameter inference
  in general Hidden Markov Model is proposed. This new algorithm updates the
  parameter estimate after a block of observations is processed (online).  The
  convergence of this new algorithm is established, and the rate of convergence
  is studied showing the impact of the block-size sequence. An averaging procedure is
  also proposed to improve the rate of convergence.  Finally, practical
  illustrations are presented to highlight the performance of these algorithms in comparison to other online maximum likelihood procedures.
\end{abstract}

\section{Introduction}
\label{BOEM:sec:intro}
A hidden Markov model (HMM) is a stochastic process $\{X_k,
Y_k\}_{k\geq 0}$ in $\Xset\times\Yset$, where the state sequence $\{X_k\}_{k\geq 0}$ is a Markov chain and where the observations $\{Y_k\}_{k\geq 0}$ are independent conditionally on $\{X_k\}_{k \geq 0}$. Moreover, the conditional distribution of $Y_k$ given the state sequence depends only on
$X_k$.  The sequence $\{X_k\}_{k\geq 0}$ being unobservable, any statistical inference task is carried out using the observations $\{Y_k\}_{k\geq 0}$. These HMM can be applied in a large
variety of disciplines such as financial econometrics
(\cite{mamon:elliott:2007}), biology
(\cite{churchill:1992}) or  speech recognition (\cite{juang:rabiner:1991}).

The Expectation Maximization (EM) algorithm is an iterative algorithm
used to solve maximum likelihood estimation in HMM, see~\cite{dempster:laird:rubin:1977}. The EM algorithm is generally simple to implement since it relies on complete data computations. Each iteration is decomposed into two steps: the E-step computes the conditional expectation of the complete data log-likelihood given the observations and the M-step updates the parameter estimate based on this conditional expectation. In many situations of interest, the complete data likelihood belongs to the curved exponential family. In this case, the E-step boils down to the computation of the conditional expectation of the complete data sufficient statistic. Even in this case, except for simple models such as linear Gaussian models or HMM with finite state-spaces, the E-step is intractable and has to be approximated e.g.  by Monte Carlo methods such as Markov Chain Monte Carlo methods or Sequential Monte Carlo
methods (see \cite{carlin:polson:stoffer:1992} or \cite{cappe:moulines:ryden:2005,doucet:defreitas:gordon:2001} and the references therein).

However, when processing large data sets or data streams, the EM algorithm might
become impractical.  {\em Online} variants of the EM algorithm have been first
proposed for independent and identically distributed (i.i.d.)  observations, see~\cite{cappe:moulines:2009}.  When the complete data likelihood belongs to the cruved exponential family, the E-step is replaced by a stochastic
approximation step while the M-step remains unchanged. The convergence of this online variant of the EM algorithm for
i.i.d.  observations is addressed by~\cite{cappe:moulines:2009}: the limit points are the stationary points of the Kullback-Leibler divergence between the marginal distribution of the
observation and the model distribution.

An online version of the EM algorithm for HMM when both the
observations and the states take a finite number of values (resp. when the
states take a finite number of values) was recently proposed by~\cite{mongillo:deneve:2008} (resp. by~\cite{cappe:2011}).  This algorithm has been extended to the case of general state-space models by
substituting deterministic approximation of the smoothing probabilities for
Sequential Monte Carlo algorithms (see~\cite{cappe:2009,delmoral:doucet:singh:2010a,lecorff:fort:moulines:2011}). There do not exist convergence results for these online EM algorithms for general state-space models (some insights on
the asymptotic behavior are nevertheless given in~\cite{cappe:2011}):
the introduction of many approximations at different steps of the algorithms
makes the analysis quite challenging.

In this contribution, a new online EM algorithm is proposed for HMM with complete data likelihood belonging to the curved exponential family. This algorithm sticks closely to the principles
of the original batch-mode EM algorithm. The M-step (and thus, the update of
the parameter) occurs at some deterministic times $\{T_k\}_{k \geq 1}$ i.e. we
propose to keep a fixed parameter estimate for blocks of observations of
increasing size.  More precisely, let $\{T_k\}_{k\geq0}$ be an increasing
sequence of integers $(T_0=0)$. For each $k\geq0$, the parameter's value is
kept fixed while accumulating the information brought by the observations
$\{Y_{T_{k}+1},\cdots,Y_{T_{k+1}}\}$.  Then, the parameter is updated at the end of the block.
This algorithm is an online algorithm since the sufficient statistics of the
$k$-th block can be computed on the fly by updating an intermediate quantity
when a new observation $Y_t$, $t \in \{T_{k}+1, \dots, T_{k+1}\}$ becomes
available. Such recursions are provided in recent works
  on online estimation in HMM, see \cite{cappe:2009,cappe:2011,delmoral:doucet:singh:2010a}.

  This new algorithm, called {\em Block Online EM} (BOEM) is derived
  in Section~\ref{BOEM:sec:BOEM:description} together with an {\em averaged}
  version.  Section~\ref{BOEM:sec:MCexperiments} is devoted to practical
  applications: the BOEM algorithm is used to perform parameter inference in HMM where the
  forward recursions mentioned above are available explicitly. In the case of finite
  state-space HMM, the BOEM algorithm is compared to a gradient-type recursive maximum
  likelihood procedure and to the online EM algorithm of~\cite{cappe:2011}. The convergence of the BOEM algorithm is addressed in Section~\ref{BOEM:sec:convergence}.  The BOEM algorithm is seen as a perturbation of a deterministic {\em limiting EM} algorithm which is shown to converge to the stationary points of the limiting relative entropy (to which the true parameter belongs if the model is well specified). The perturbation is shown to vanish (in some sense) as the number of
  observations increases thus implying that the BOEM algorithms inherits the asymptotic
  behavior of the limiting EM algorithm.  Finally, in Section~\ref{BOEM:sec:averaging}, we
  study the rate of convergence of the BOEM algorithm as a function of the block-size sequence. We prove that the averaged BOEM algorithm is rate-optimal when the block-size sequence grows polynomially.  All the proofs are
  postponed to Section~\ref{BOEM:sec:proofs}; supplementary proofs and comments are provided
  in~\cite{lecorff:fort:2011-supp}.

\section{The Block Online EM algorithms}
\label{BOEM:sec:BOEM:description}
\subsection{Notations and Model assumptions}
Our model is defined as follows. Let $\paramset$ be a compact subset of $\Rset^{d_\theta}$. We are given a family of transition kernels
$\{M_{\param}\}_{\param\in\paramset}$, $M_{\param}:\Xset\times\sigmaX\to[0,1]$, a positive $\sigma$-finite
measure $\mu$ on $(\Yset,\sigmaY)$, and a family of transition densities with respect to $\mu$, $\{g_{\param}\}_{\param\in\paramset}$,  $g_{\param}:\Xset\times\Yset\to\mathbb{R}_+$.  For each
$\param\in\paramset$, define the transition kernel $K_\theta$ on $\Xset\times\Yset$ by
\[
	K_\theta \left[(x,y), C\right] \eqdef
	\int \1_C(x',y') \, g_\theta(x',y')\,\mu(\rmd y')\,
	\M_\theta(x,\rmd x')\eqsp.
\]
Denote by $\{X_k,Y_k\}_{k\ge 0}$ the canonical
coordinate process on the measurable space $\left((\Xset\times\Yset)^\Nset,(\sigmaX\otimes\sigmaY)^{\otimes\Nset}\right)$. For any $\param\in\paramset$ and any probability distribution $\chi$ on $(\Xset,\sigmaX)$, let $\PP^\chi_\theta$ be the probability distribution on $((\Xset\times\Yset)^\Nset,(\sigmaX\otimes\sigmaY)^{\otimes\Nset})$ such that $\{X_k,Y_k\}_{k\ge 0}$ is Markov chain with initial distribution $\PP^\chi_\param((X_0,Y_0)\in C)=\int
\1_C(x,y)\,g_\param(x,y)\, \mu(\rmd y)\,\chi(\rmd x)$ and transition
kernel $K_\theta$. The expectation with
respect to $\PP^\chi_\param$ is denoted by $\mathbb{E}^\chi_\theta$. Throughout this paper, it is assumed that the Markov transition kernel $K_\param$ has a unique invariant distribution $\pi_\param$ (see below for further comments). For the stationary Markov chain with initial distribution $\pi_\param$, we write $\PP_\param$ and $\mathbb{E}_\theta$ instead of $\PP^{\pi_\param}_\param$ and $\mathbb{E}^{\pi_\param}_\theta$. Note also that the stationary Markov chain $\{X_k,Y_k\}_{k\ge 0}$ can be extended to a two-sided Markov chain $\{X_k,Y_k\}_{k\in \Zset}$.

It is assumed that, for any $\param\in\paramset$ and any $x\in\Xset$, $M_{\param}(x,\cdot)$ has a density $m_{\param}(x,\cdot)$ with respect to a finite measure $\lambda$ on $(\Xset,\sigmaX)$. Define the complete data likelihood by
\begin{equation}
\label{BOEM:eq:completelik}
p_{\param}(x_{0:T},y_{0:T}) \eqdef g_{\param}(x_{0},y_{0})\prod_{i=0}^{T-1}m_\param(x_i,
  x_{i+1})g_{\param}(x_{i+1},y_{i+1})\eqsp,
\end{equation}
where, for any $u \leq s$, we will use the shorthand notation $x_{u:s}$ for the sequence $(x_u,
\cdots, x_s)$. For any probability distribution $\chi$ on $(\Xset,\sigmaX)$, any $\param\in\paramset$ and any $s\le u\le v\le t$, we have
\[
\CE[Y_{s:t}]{\chi}{\param}{f(X_{u:v})} = \int f(x_{u:v})\filt{\param,u:v|s:t}^{\chi}(\rmd x_{u:v})\eqsp,
\]
where $\filt{\param,u:v|s:t}^{\chi}$ is the so-called fixed-interval smoothing distribution. We also define the  fixed-interval smoothing distribution when $X_s\sim \chi$:
\begin{multline}
\label{eq:deflaws:shift}
\CE[Y_{s+1:t}]{\chi,s}{\param}{f(X_{u:v})} \\
=\frac{\int \prod_{i=s+1}^{t}\{m_{\param}(x_{i-1},x_{i})g_{\param}(x_{i},Y_{i})\}f(x_{u:v})\chi(\rmd x_{s})\lambda(\rmd x_{s+1:t})}{\int \prod_{i=s+1}^{t}\{m_{\param}(x_{i-1},x_{i})g_{\param}(x_{i},Y_{i})\}\chi(\rmd x_{s})\lambda(\rmd x_{s+1:t})}\eqsp.
\end{multline}

Given an initial distribution $\chi$ on $(\Xset,\sigmaX)$ and $T+1$ observations $Y_{0:T}$, the EM algorithm maximizes the so-called incomplete data log-likelihood $\param\mapsto \ell_{\param,T}^{\chi}$ defined by
\begin{equation}
\label{BOEM:eq:loglik}
\ell_{\param,T}^{\chi}(\bfY) \eqdef \log\int p_{\param}(x_{0:T},Y_{1:T})\chi(\rmd x_{0})\lambda(\rmd x_{1:T})\eqsp.
\end{equation}
The central concept of the EM algorithm is that the intermediate quantity defined by
\[
\param\mapsto Q(\param,\param')\eqdef \CE[Y_{1:T}]{\chi}{\param'}{\log p_{\param}(X_{0:T},Y_{1:T})}
\]
may be used as a surrogate for $\ell_{\param,T}^{\chi}(Y_{0:T})$ in the maximization procedure. Therefore, the EM algorithm iteratively builds a sequence $\{\param_{n}\}_{n\ge 0}$ of parameter estimates following the two steps:
\begin{enumerate}[i)]
\item \label{BOEM:Estep}Compute $\param\mapsto Q(\param,\param_{n})$.
\item \label{BOEM:Mstep}Choose $\param_{n+1}$ as a maximizer of $\param \mapsto Q(\param,\param_{n})$.
\end{enumerate}
In the sequel,  it is assumed that  there exist functions $S$, $\phi$ and $\psi$ such that (see A\ref{BOEM:assum:exp} for a more precise definition), for any $(x,x')\in\Xset^{2}$ and any $y\in\Yset$,
\[
m_\param(x,x') g_\param(x',y) = \exp\left\{\phi(\param) +
  \pscal{S(x,x,',y)}{\psi(\param)}\right\}\eqsp.
  \]
 Therefore, the complete data likelihood belongs to the curved exponential family and the step \ref{BOEM:Estep}) of the EM algorithm amounts to computing
  \[
  \param\mapsto Q(\param,\param_{n}) = \phi(\param) + \pscal{\frac{1}{T}\sum_{t=1}^{T}\CE[Y_{1:T}]{\chi}{\param_{n}}{S(X_{t-1},X_{t},Y_{t})}}{\psi(\param)}\eqsp,
  \]
  where $\pscal{\cdot}{\cdot}$ is the scalar product on $\Rset^{d}$ (and where the contribution of $g_{\param}(x_0,Y_0)$ is omitted for brevity).
  It is also assumed that for any $s\in\Sset$, where $\Sset$ is an appropriately defined set, the function $\param\mapsto \phi(\param) + \pscal{s}{\psi(\param)}$ has a unique maximum denoted by $\bar\param(s)$. Hence, a step of the EM algorithm writes
  \[
  \param_n = \bar \param\left(\frac{1}{T}\sum_{t=1}^{T}\CE[Y_{1:T}]{\chi}{\param_{n-1}}{S(X_{t-1},X_{t},Y_{t})}\right)\eqsp.
  \]
\subsection{The Block Online EM (BOEM) algorithms}
We now derive an online version of the EM algorithm. Define $\bar S_{\tau}^{\chi,T}(\param, \bfY)$ as the intermediate quantity of the EM algorithm computed with the observations $Y_{T:T+\tau}$:
\begin{equation}
  \label{BOEM:eq:rewrite:barS}
  \bar S_{\tau}^{\chi,T}(\param, \bfY) \eqdef
\frac{1}{\tau} \sum_{t=T+1}^{T+\tau}\CE[Y_{T+1:T+\tau}]{\chi,T}{\param}{S(X_{t-1},X_{t},Y_{t})}\eqsp,
\end{equation}
where $\CE[Y_{T+1:T+\tau}]{\chi,T}{\param}{\cdot}$ is defined by \eqref{eq:deflaws:shift}.
Let $\{\tau_n \}_{n \geq 1}$ be a sequence of positive integers such that $\lim_{n\to \infty}\tau_{n}=+\infty$ and set
\begin{equation}
\label{BOEM:eq:timeupdate}
T_n\eqdef \sum_{k=1}^n \tau_k\quad\mbox{and}\quad T_0 \eqdef 0\eqsp;
\end{equation}
$\tau_n$ denotes the length of the $n$-th block. Given an initial value $\param_{0} \in \paramset$, the BOEM algorithm defines a sequence $\{\param_{n}\}_{n \geq 1}$ by
\begin{equation}
\label{BOEM:eq:Bonem:recursion}
\param_{n}  \eqdef \bar\param \left[S_{n-1}\right]\eqsp,\;\mbox{and}\; S_{n-1}\eqdef \bar S_{\tau_n}^{\chi_{n-1},T_{n-1}}(\param_{n-1}, \bfY)\eqsp,
\end{equation}
where $\{\chi_n\}_{n\ge 0}$ is a family of probability distributions on $(\Xset,\sigmaX)$. By analogy to the regression problem, an estimator with reduced variance can be obtained by averaging and weighting the
successive estimates (see
\cite{kushner:yin:1997,polyak:juditsky:1992} for a discussion on
the averaging procedures). Define $\Sigma_0 \eqdef 0$ and for $n \geq 1$,
\begin{equation}
\label{BOEM:eq:Bonem:averaged}
\Sigma_{n} \eqdef
\frac{1}{T_n} \sum_{j=1}^n \tau_j  \, S_{j-1}\eqsp.
\end{equation}
Note that this quantity can be computed iteratively and does not require to
store the past statistics $\{S_{j}\}_{j=0}^{n-1}$.  Given an initial value $\widetilde
\param_{0}$, the averaged BOEM algorithm defines a sequence $\{\widetilde
\param_{n}\}_{n \geq 1}$ by
\begin{equation}
  \label{BOEM:eq:abonem:recursion}
  \widetilde \param_n \eqdef \bar\param \left(\Sigma_n\right) \eqsp.
\end{equation}
The algorithm above relies on the assumption that $S_{n}$ can be computed in closed form. In the HMM case, this property is satisfied only for linear Gaussian models or when the state-space is finite. In all other cases, $S_{n}$ cannot be computed
explicitly and will be replaced by a Monte Carlo approximation $\widetilde{S}_{n}$. Several Monte Carlo approximations can be used to compute $\widetilde{S}_{n}$. The convergence properties of the Monte Carlo BOEM algorithms rely on the assumption that the Monte Carlo error can be controlled on each block. \cite{lecorff:fort:2012} provides examples of applications when Sequential Monte Carlo algorithms are used. Hereafter, we use the same notation $\{\param_{n}\}_{n\ge 0}$ and $\{\widetilde\param_{n}\}_{n\ge 0}$ for the original BOEM algorithm or its Monte Carlo approximation.

Our algorithms update the parameter after processing a block of observations.  Nevertheless, the intermediate quantity $S_{n}$ can be either exactly computed or approximated in such a way that the observations are processed online. In this case, the intermediate quantity $S_n$ or $\widetilde S_n$ is updated online for each observation. Such an algorithm is described in \cite[Section~$2.2$]{cappe:2011} and \cite[Proposition~$2.1$]{delmoral:doucet:singh:2010a} and can be applied either to finite state-space HMM or to linear Gaussian models. \cite{delmoral:doucet:singh:2010a} proposed a Sequential Monte Carlo approximation to compute $\widetilde{S}_{n}$ online for more complex models (see also \cite{lecorff:fort:2012}).

The classical theory of maximum likelihood estimation often relies on the assumption that the "true" distribution of the observations belongs to the specified parametric family of
distributions. In many cases, it is doubtful that this assumption is satisfied.
It is therefore natural to investigate the convergence of the BOEM algorithms and to identify the possible limit
for misspecified models i.e. when the observations $\{Y_k\}_{k\ge 0}$ are from an ergodic process which is not necessarily an HMM.

\section{Application to inverse problems in Hidden Markov Models}
\label{BOEM:sec:MCexperiments}
In Section~\ref{BOEM:sec:LGM}, the performance of the BOEM algorithm and its averaged version are illustrated in a linear Gaussian model. In Section~\ref{BOEM:sec:appli:finiteHMM}, the BOEM algorithm is compared to online maximum likelihood procedures in the case of finite state-space HMM.

Applications of the Monte Carlo BOEM algorithm to more complex models with Sequential Monte Carlo methods can be found in \cite{lecorff:fort:2012}.

\subsection{Linear Gaussian Model}
\label{BOEM:sec:LGM}
Consider the linear Gaussian model:
\begin{equation*}
  X_{t+1} = \phi X_t + \sigma_uU_t\eqsp, \qquad \qquad Y_t = X_t + \sigma_vV_t\eqsp,
\end{equation*}
where $X_0\sim\mathcal{N}\left(0,\sigma_u^2 (1-\phi^2)^{-1}\right)$,
$\{U_t\}_{t\geq 0},\{V_t\}_{t\geq 0}$ are independent i.i.d. standard Gaussian r.v., independent from
$X_0$. Data are sampled using $\phi = 0.9$, $\sigma_{u}^{2} = 0.6$ and
$\sigma_{v}^{2} = 1$. All runs are started with $\phi = 0.1$, $\sigma_{u}^{2} =
1$ and $\sigma_{v}^{2} = 2$.

We illustrate the convergence of the BOEM algorithms. We choose $\tau_n = n^{1.1}$. We display in Figure~\ref{BOEM:fig:LGM} the median and lower and upper quartiles  for the estimation of $\phi$ obtained with $100$ independent Monte Carlo
experiments. Both the BOEM algorithm and its averaged version converge to the true value $\phi=0.9$; the averaging procedure clearly improves the variance of the estimation.

\begin{figure}[!h]
  \centering
  \subfloat[The BOEM algorithm without averaging.]{\label{BOEM:fig:LGMnonaveraging}\includegraphics[width=.75\textwidth]{./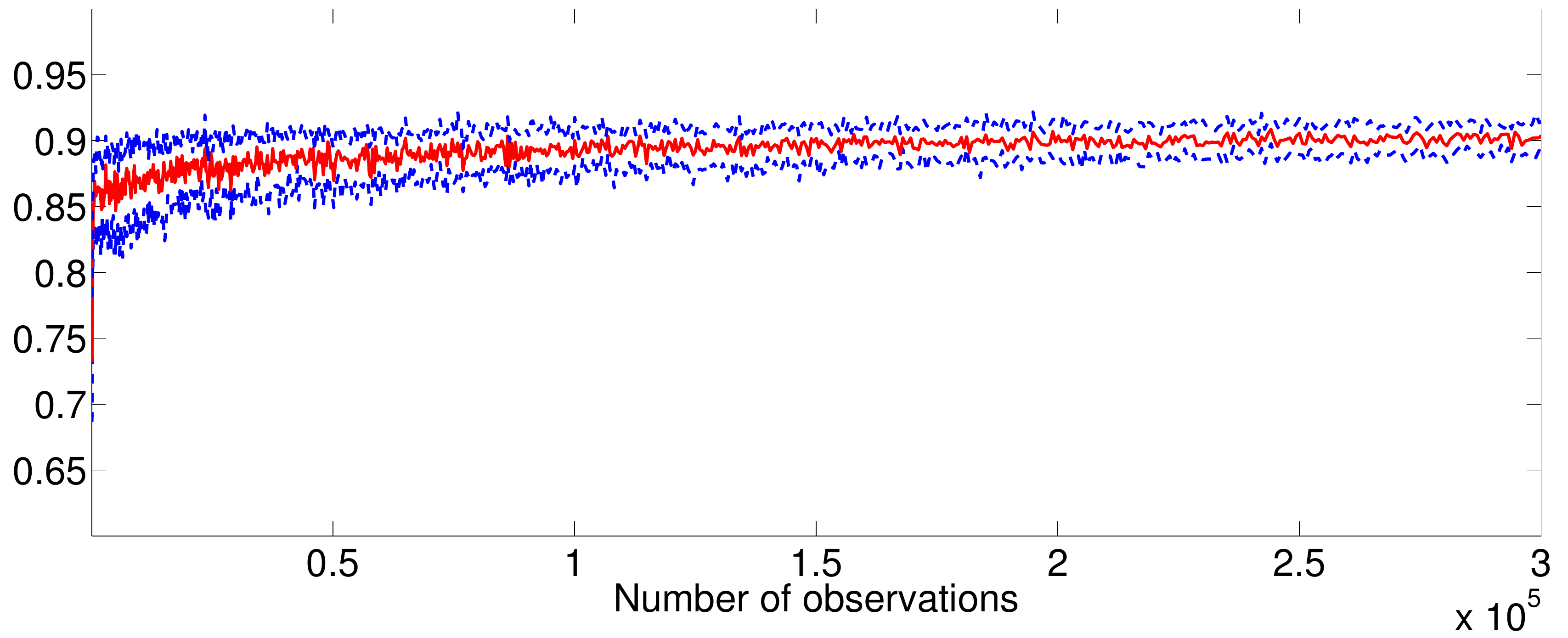}}\\
  \subfloat[The BOEM algorithm with averaging.]{\label{BOEM:fig:LGMaveraging}\includegraphics[width=.75\textwidth]{./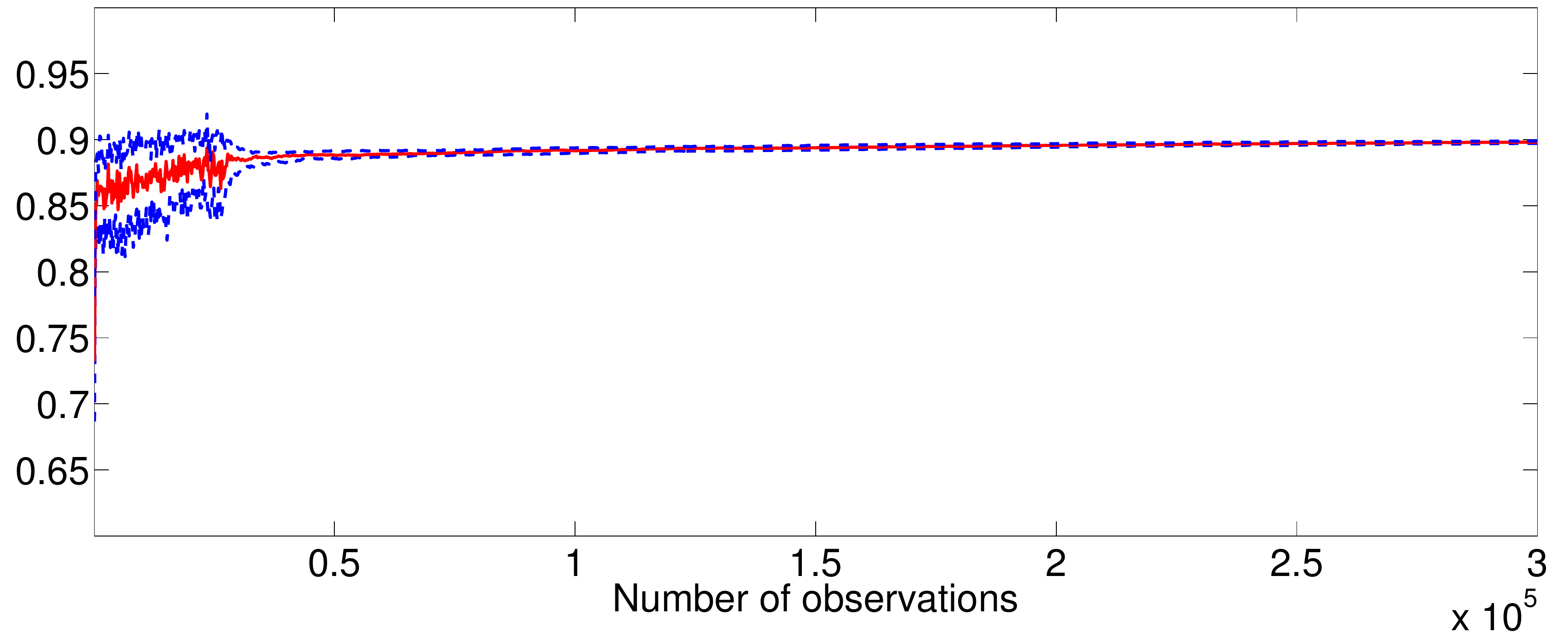}}
   \caption{Estimation of $\phi$.}
   \label{BOEM:fig:LGM}
\end{figure}

We now discuss the role of $\{\tau_{n}\}_{n\geq 0}$.
Figure~\ref{BOEM:fig:LGMblocksize} displays the empirical variance, when estimating
$\phi$, computed with $100$ independent Monte Carlo runs, for different numbers
of observations and, for both the BOEM algorithm and its averaged version. We consider
four polynomial rates $\tau_n \sim n^{b}$, $b \in \{1.2, 1.8, 2, 2.5 \}$.
Figure~\ref{BOEM:fig:LGMblocksize:nonavg} shows that the choice of
$\{\tau_{n}\}_{n\geq 0}$ has a great impact on the empirical variance of the
(non averaged) BOEM path $\{\theta_n\}_{n \geq 0}$.  To reduce this
variability, a solution could consist in increasing the block sizes $\tau_n$ at
a larger. The influence of the block size sequence $\tau_n$ is greatly reduced with the averaging procedure as shown in Figure~\ref{BOEM:fig:LGMblocksize:avg}. We will show in
Section~\ref{BOEM:sec:averaging} that averaging really improves the rate of
convergence of the BOEM algorithm.
 \begin{figure}[!h]
   \centering
   \subfloat[The BOEM algorithm, without averaging]{\label{BOEM:fig:LGMblocksize:nonavg}\includegraphics[width=0.7\textwidth]{./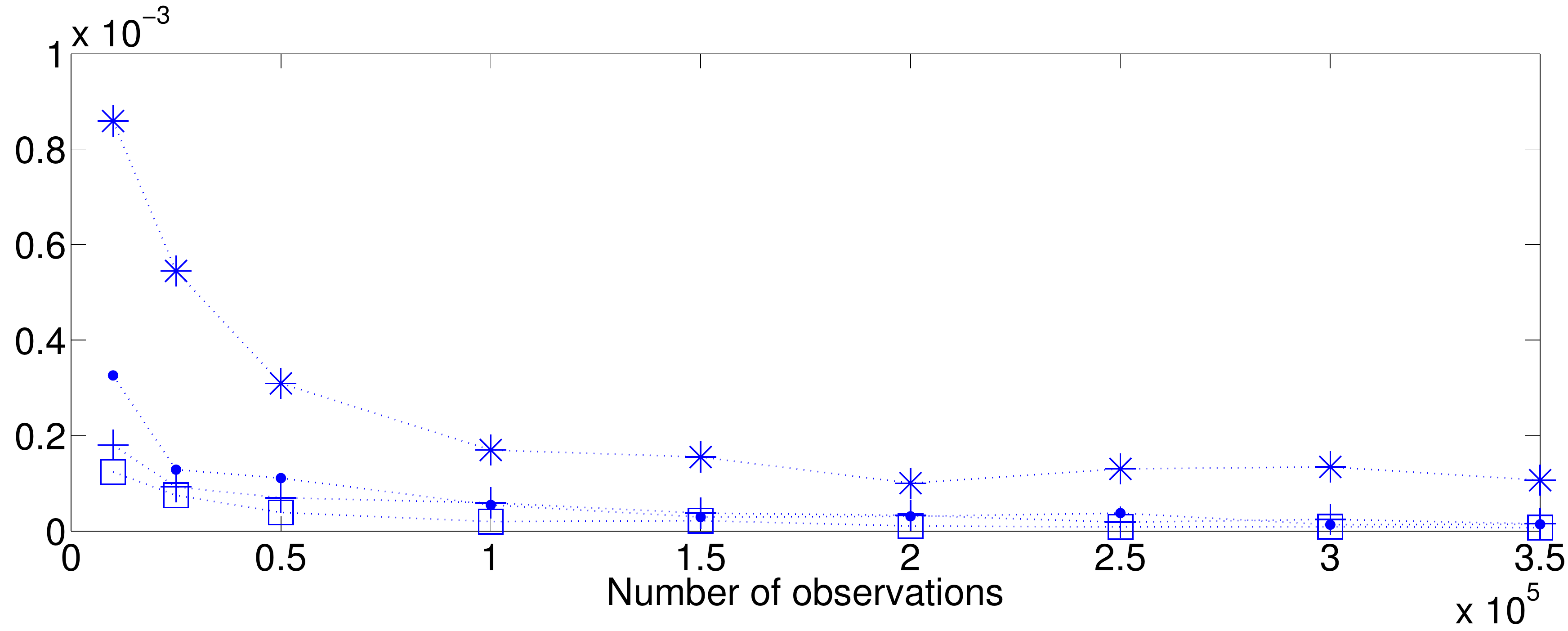}}\\
   \subfloat[The BOEM algorithm, with averaging]{\label{BOEM:fig:LGMblocksize:avg}\includegraphics[width=0.7\textwidth]{./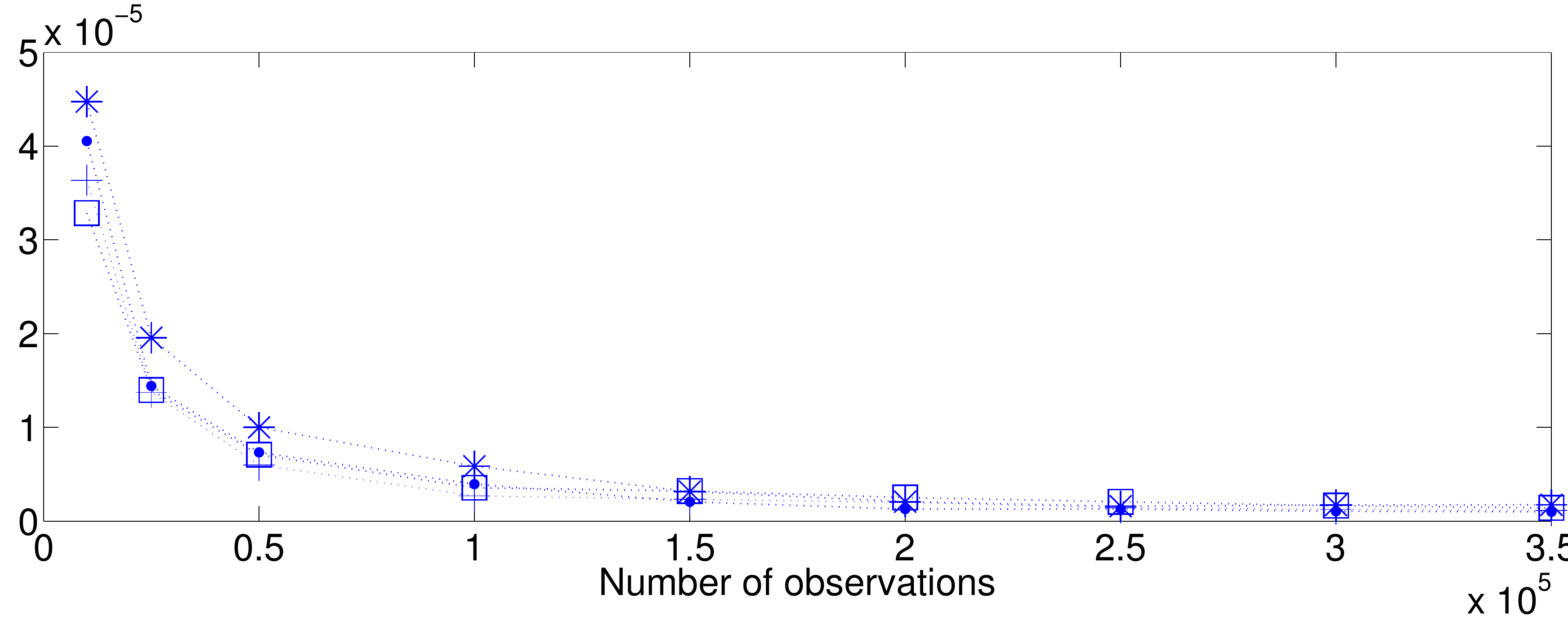}}
   \caption{The BOEM algorithm: empirical variance of the estimation of $\phi$ after $n = 0.5 \ell \, \cdot10^5$  observations ($\ell \in \{1,\cdots,7\}$) for different block size schemes $\tau_{n} \sim n^{1.2}$ (stars), $\tau_{n} \sim n^{1.8}$ (dots), $\tau_{n} \sim n^{2}$ (crosses) and $\tau_{n} \sim n^{2.5}$ (squares).}
   \label{BOEM:fig:LGMblocksize}
 \end{figure}

\subsection{Finite state-space HMM}
\label{BOEM:sec:appli:finiteHMM}
  We consider a Gaussian mixture process with Markov dependence of the form: $Y_t = X_t + V_t$ where $\{X_{t}\}_{t\geq 0}$ is a Markov chain taking values in
$\Xset\eqdef\{x_{1}, \dots, x_{d}\}$, with initial distribution $\chi$ and a $d \times
d$ transition matrix $m$. $\{V_t\}_{t\geq 0}$ are i.i.d.  $\mathcal{N}(0,v)$ r.v., independent from $\{X_{t}\}_{t\geq 0}$, i.e., for all $(x,y)\in\Xset\times \Yset$,
\[
g_{\param}(x,y) \eqdef (2\pi v)^{-1/2}\exp\left\{-\frac{(y-x)^{2}}{2v}\right\}\eqsp,
\]
where $\param\eqdef \left(v, x_{1:d}, (m_{i,j})_{i,j=1}^{d}\right)$. The true transition matrix is given by
\[
m =  \begin{pmatrix}0.5 & 0.05 & 0.1 & 0.15 & 0.15 & 0.05\\ 
    0.2& 0.35  &0.1 &0.15 &0.05& 0.15\\
          0.1& 0.1  &0.6 & 0.05 &0.05 &0.1\\
          0.02 &0.03& 0.1 &0.7 &0.1& 0.05\\
          0.1 &0.05& 0.13 &0.02 &0.6 &0.1\\
          0.1& 0.1& 0.13 &0.12& 0.1& 0.45\end{pmatrix}\eqsp.
\]
In the experiments below, the initial distribution below is chosen as the uniform distribution on $\Xset$. The statistics used to estimate $\param$ are, for all $(i,j)\in\{1,\cdots,d\}$ and all $(x,x')\in\Xset^{2}$,
\begin{align}
S^{i,0}(x,x',y) &= \1_{x_{i}}(x')\eqsp,\quad\hspace{.4cm} S^{i,1}(x,x',y) = y\1_{x_{i}}(x')\eqsp,\label{BOEM:eq:stats}\\
S^{i,2}(x,x',y) &= y^{2}\1_{x_{i}}(x')\eqsp,\quad S_{i,j}(x,x',y) = \1_{x_{i}}(x)\1_{x_{j}}(x')\nonumber\eqsp.
\end{align}
The online computation of these intermediate quantities is given \cite[Section~$2.2$]{cappe:2011}. The computations below are performed for each statistic in \eqref{BOEM:eq:stats}. Define, for all $x\in\Xset$, $\phi_{0}(x)=\chi(x)$ and $\rho_{0}(x)=0$.
\begin{enumerate}[i)]
\item For $t\in\{1,\cdots,\tau\}$, compute, for any $x\in\Xset$,
\[
\phi_{t}(x) = \frac{\sum_{x'\in \Xset}\phi_{t-1}(x')m_{x',x}g_{\param}(x,Y_{t+T})}{\sum_{x',x''\in \Xset}\phi_{t-1}(x')m_{x',x''}g_{\param}(x'',Y_{t+T})}\eqsp,
\]
and
\[
r_{t}(x,x') = \frac{\phi_{t-1}(x')m_{x',x}}{\sum_{x''\in\Xset}\phi_{t-1}(x'')m_{x'',x}}\eqsp.
\]
\begin{equation*}
\rho_{t}(x) = \sum_{x'\in\Xset}\left[\frac{1}{t}S(x,x',Y_{t+T}) + \left(1-\frac{1}{t}\right)\rho_{t-1}(x')\right]r_{t}(x,x')\eqsp.
\end{equation*}
\item Set
\[
\bar S_{\tau}^{\chi,T}(\param,\bfY) = \sum_{x\in\Xset}\rho_{\tau}(x)\phi_{\tau}(x)\eqsp.
\]
\end{enumerate}
At the end of the block, the new estimate is given, for all $(i,j)\in\{1,\cdots,d\}^{2}$ by (the dependence on $\bfY$, $\param$, $\chi$, $T$ and $\tau$ is dropped from the notation)
\[
m_{i,j} = \frac{\bar S_{i,j}}{\sum_{j=1}^{d}\bar S_{i,j}}\eqsp,\; x_{i} = \frac{\bar S^{i,1}}{\bar S^{i,0}},\; v =\sum_{i=1}^{d}\bar S^{i,2}+\sum_{i=1}^{d}x_{i}^{2}\bar S^{i,0}-2\sum_{i=1}^{d}x_{i}\bar S^{i,1}\eqsp.
\]

We first compare the averaged BOEM algorithm to the online EM (OEM) procedure of \cite{cappe:2011} combined with a Polyak-Ruppert averaging (see \cite{polyak:juditsky:1992}). Note that the convergence of the OEM algorithm is still an open problem. In this case, we want to estimate the variance $v$ and the states $\{x_{1}, \dots, x_{d}\}$. All the runs are started from $v = 2$ and from the initial states $\{-1;0;.5;2;3;4\}$.  The algorithm in \cite{cappe:2011} follows a stochastic approximation update and depends on a step-size sequence $\{\gamma_{n}\}_{n\geq 0}$. It is expected that the rate of convergence in $\rmL_{2}$ after $n$ observations is $\gamma_n^{1/2}$ (and $n^{-1/2}$ for its averaged version) - this assertion relies on classical results for stochastic approximation.  We prove in Section~\ref{BOEM:sec:averaging} that the rate of convergence of the BOEM algorithm is
$n^{-b/(2(b+1))}$ (and $n^{-1/2}$ for its averaged
version) when $\tau_n \propto  n^b$. Therefore, we set $\tau_{n} = n^{1.1}$ and $\gamma_{n} = n^{-0.53}$. Figure~\ref{BOEM:fig:quantilev} displays the empirical median and first and last quartiles for the estimation of $v$ with both algorithms and their averaged versions as a function of the number of observations. These estimates are obtained over $100$ independent Monte Carlo runs. Both the BOEM and the OEM algorithms converge to the true value of $v$ and the averaged versions reduce the variability of the estimation.  Figure~\ref{BOEM:fig:quantilex1} shows the similar behavior of both averaged algorithms for the estimation of $x_{1}$ in the same experiment. Some supplementary graphs on the estimation of the states can be found in \cite[Section~$4$]{lecorff:fort:2011-supp}).

 \begin{figure}[!h]
   \centering
   \subfloat[The BOEM algorithm.]{\includegraphics[width=0.5\textwidth]{./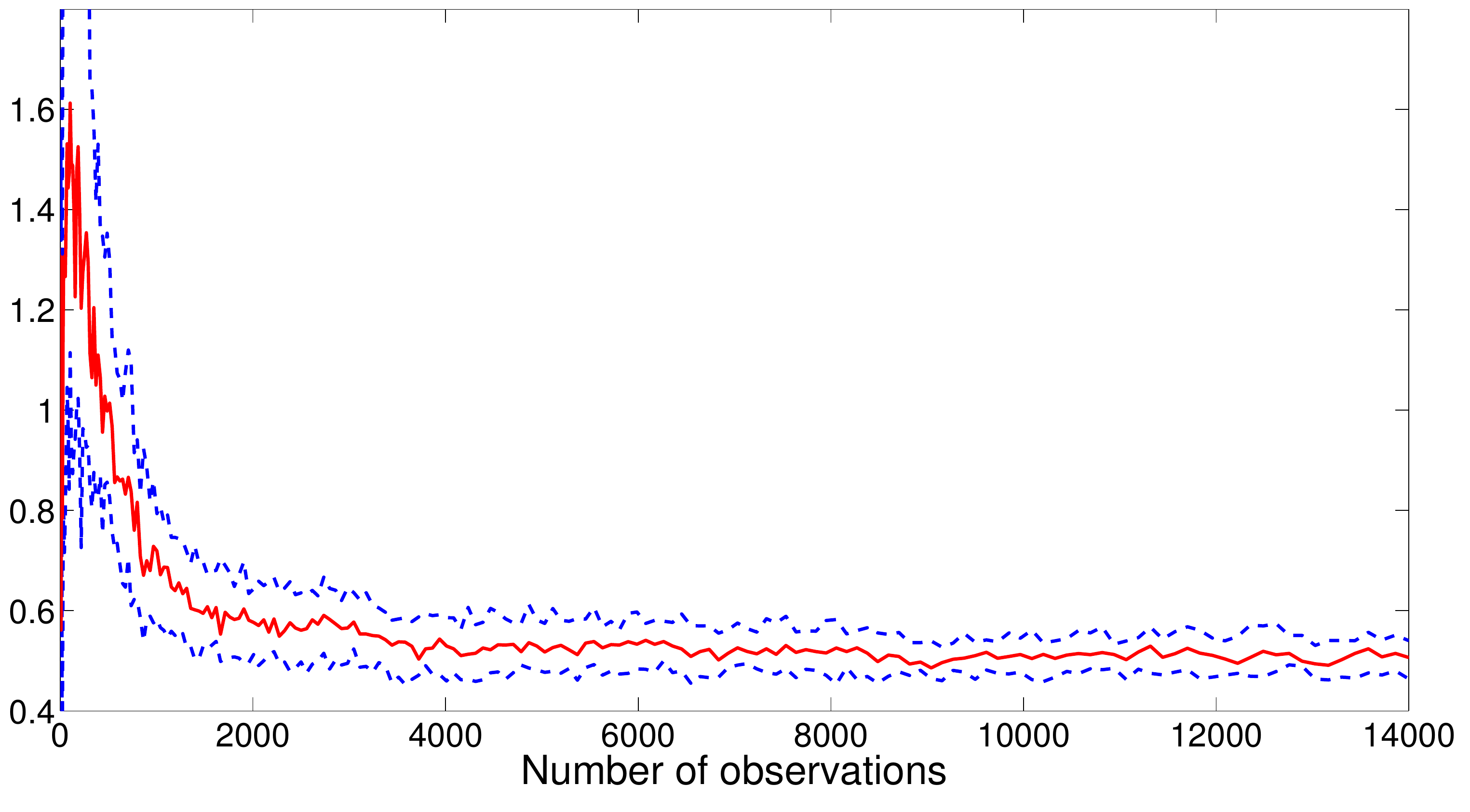}}
   \subfloat[The OEM algorithm.]{\includegraphics[width=0.5\textwidth]{./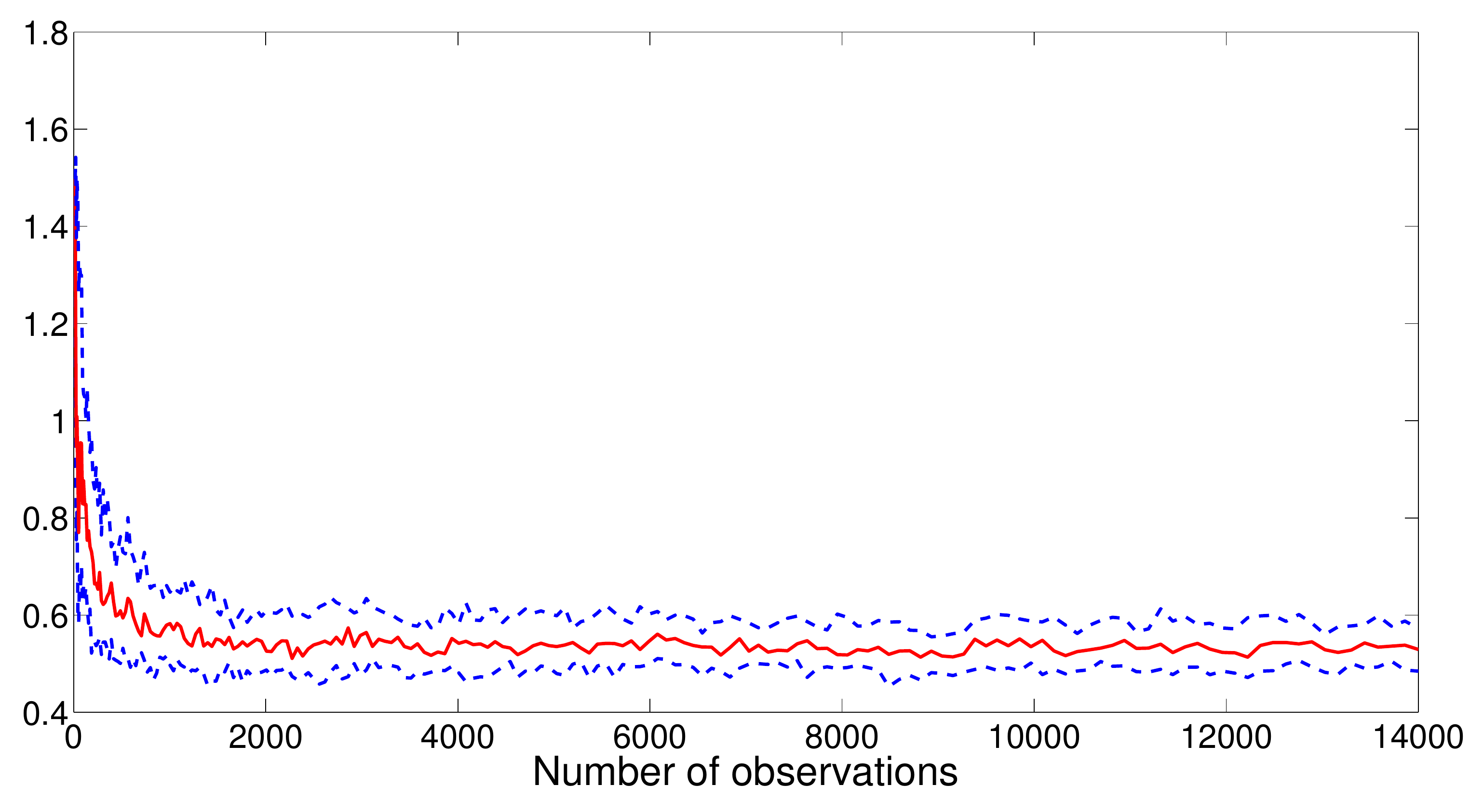}}\\
   \subfloat[The averaged BOEM algorithm.]{\includegraphics[width=0.5\textwidth]{./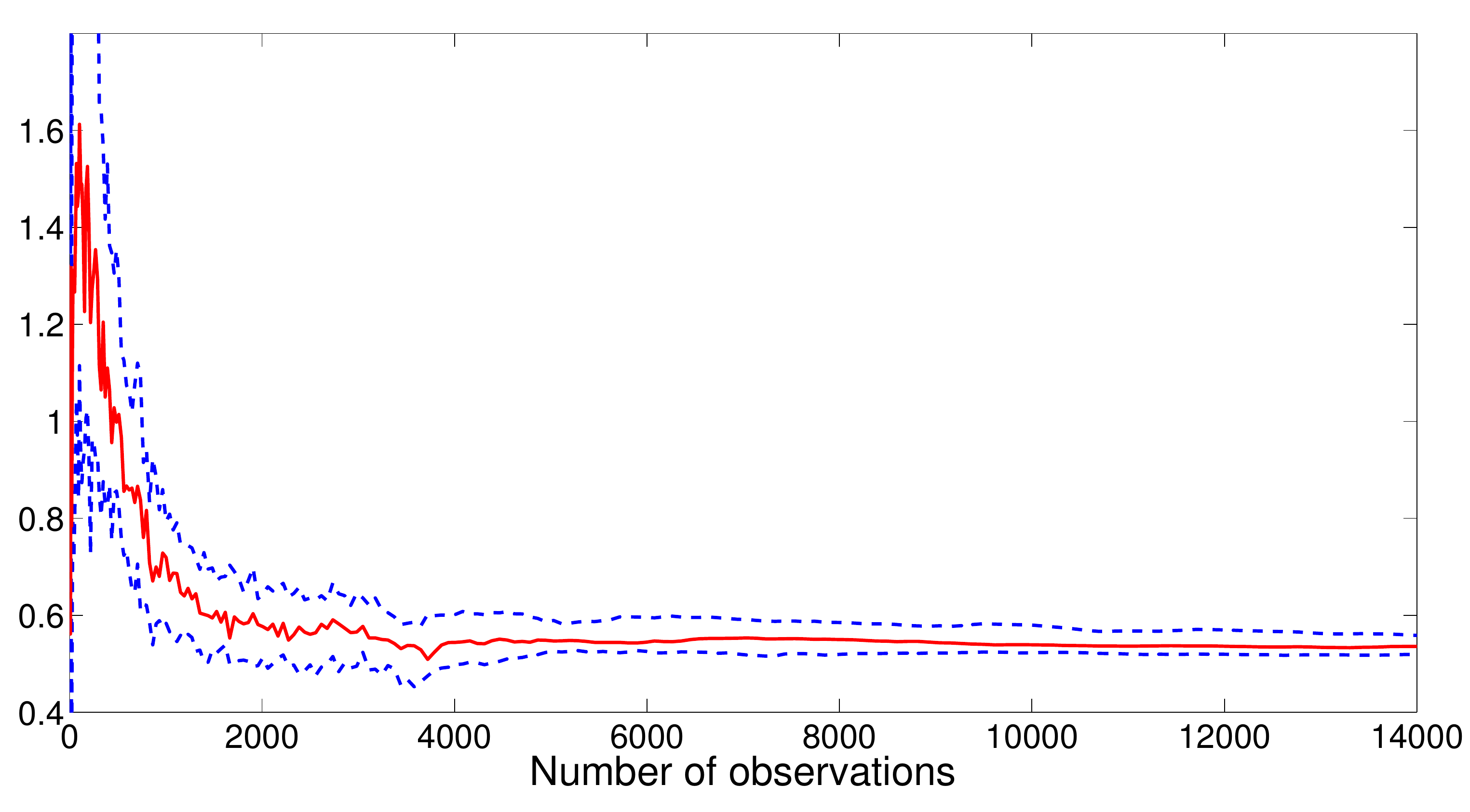}}
   \subfloat[The averaged OEM algorithm.]{\includegraphics[width=0.5\textwidth]{./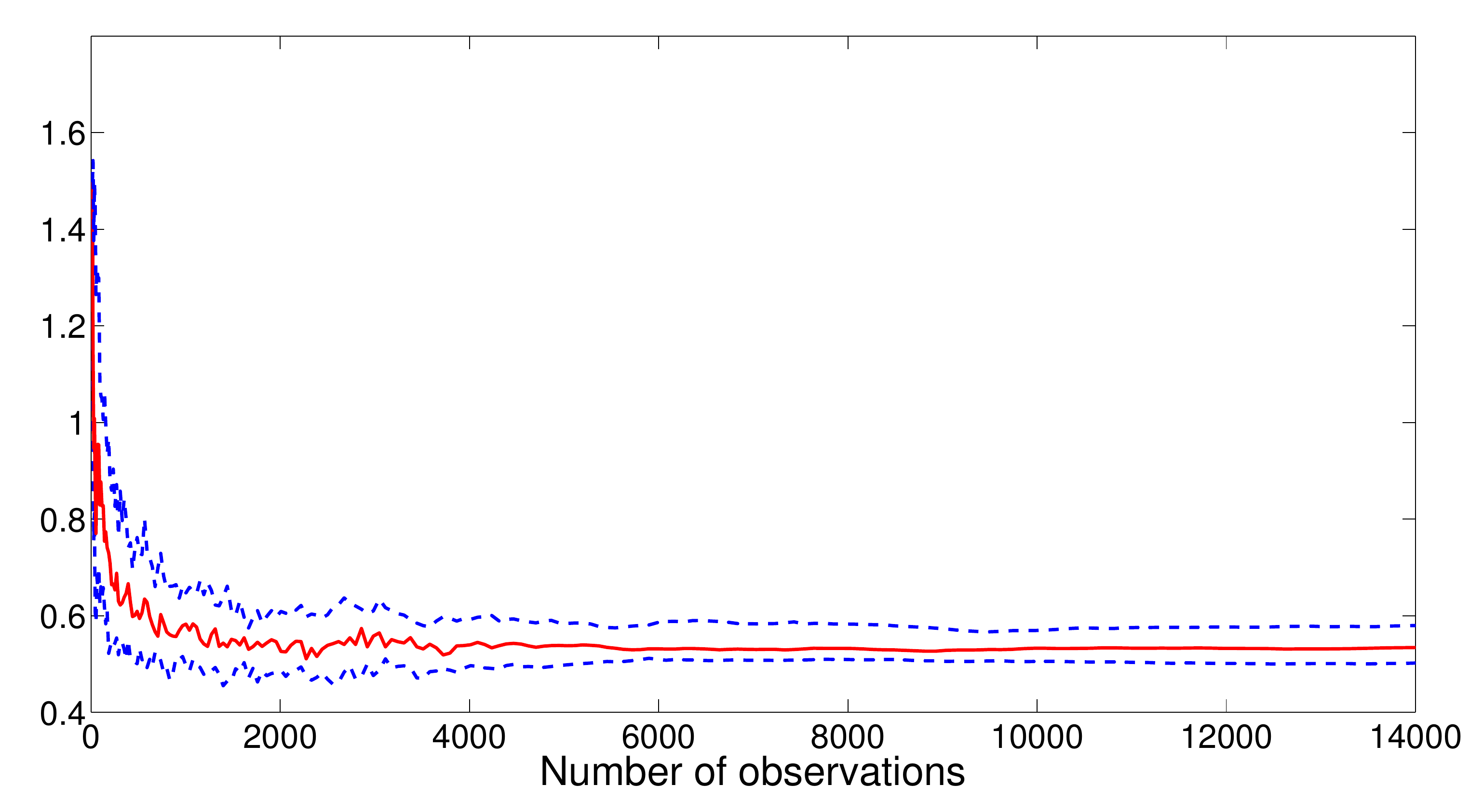}}
   \caption{Estimation of $v$ using the online EM and the BOEM algorithms (top)  and their averaged versions (bottom). Each plot displays the empirical median (bold line) and the first and last quartiles (dotted lines) over $100$ independent Monte Carlo runs with $\tau_{n} = n^{1.1}$ and $\gamma_{n} = n^{-0.53}$.}
   \label{BOEM:fig:quantilev}
 \end{figure}

 \begin{figure}[!h]
   \centering
   \subfloat[The averaged BOEM algorithm.]{\includegraphics[width=0.5\textwidth]{./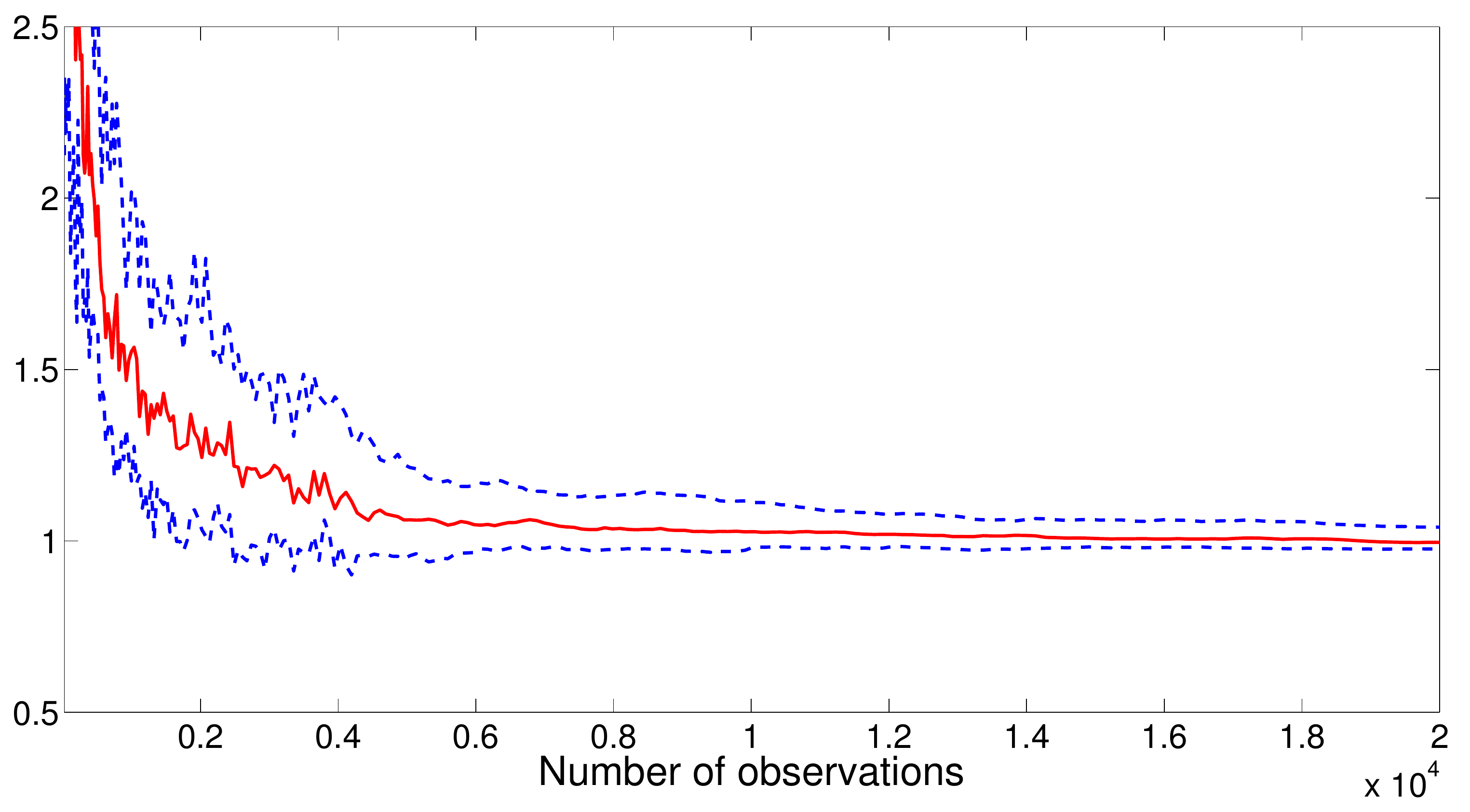}}
   \subfloat[The averaged OEM algorithm.]{\includegraphics[width=0.5\textwidth]{./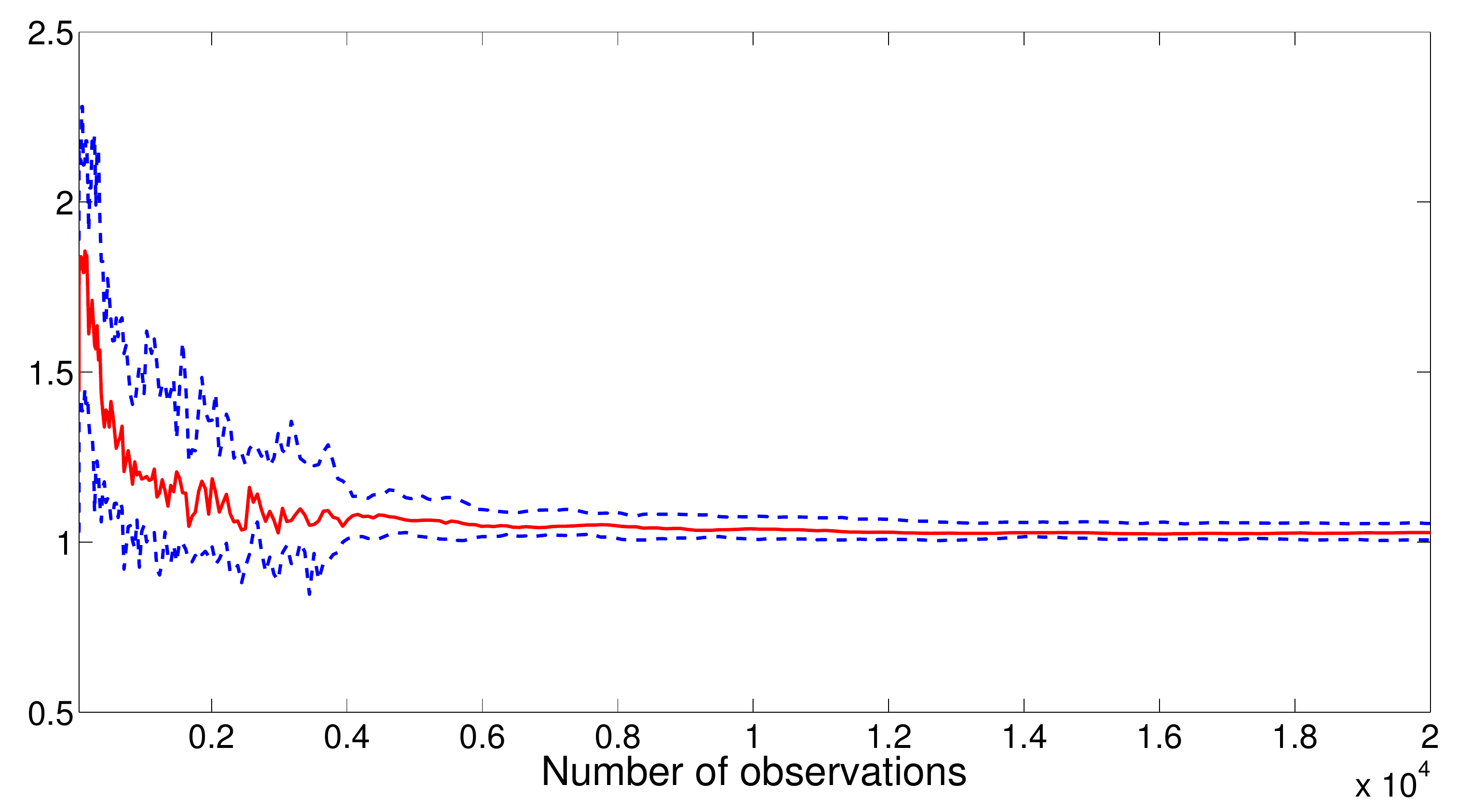}}
   \caption{Estimation of $x_{1}$ using the averaged OEM and the averaged BOEM algorithms. Each plot displays the empirical median (bold line) and the first and last quartiles (dotted lines) over $100$ independent Monte Carlo runs with $\tau_{n} = n^{1.1}$ and $\gamma_{n} = n^{-0.53}$. The first ten observations are omitted for a better visibility.}
   \label{BOEM:fig:quantilex1}
 \end{figure}

We now compare the averaged BOEM algorithm to a recursive maximum likelihood (RML) procedure (see \cite{legland:mevel:1997,tadic:2010}) combined with Polyak-Ruppert averaging (see \cite{polyak:juditsky:1992}). We want to estimate the variance $v$ and the transition matrix $m$. All the runs are started from $v = 2$ and from a matrix $m$ with each entry equal to $1/d$.  The RML algorithm  follows a stochastic approximation update and depends on a step-size sequence $\{\gamma_{n}\}_{n\geq 0}$ which is chosen in the same way as above.
Therefore, for a fair comparison, the RML algorithm (resp. the BOEM algorithm) is run with $\gamma_n= n^{-0.53}$ (resp.  $\tau_n = n^{1.1}$).  Figure~\ref{BOEM:fig:boxion} displays the empirical median and empirical first and last quartiles of the estimation of $m(1,1)$ as a function of the number of observations over $100$ independent Monte Carlo runs. For both algorithms, the bias and the variance of the estimation decrease as $n$ increases. Nevertheless, the bias and/or the variance of the averaged BOEM algorithm decrease faster than those of the averaged RML algorithm (similar graphs have been obtained for the estimation of the other entries of the matrix $m$ and for the estimation of $v$; see \cite[Section~$4$]{lecorff:fort:2011-supp}). As a conclusion, it is advocated to use the averaged BOEM algorithm instead of the averaged RML algorithm.
\begin{figure}[!h]
   \centering
   \subfloat[The averaged BOEM algorithm.]{\includegraphics[width=0.45\textwidth]{./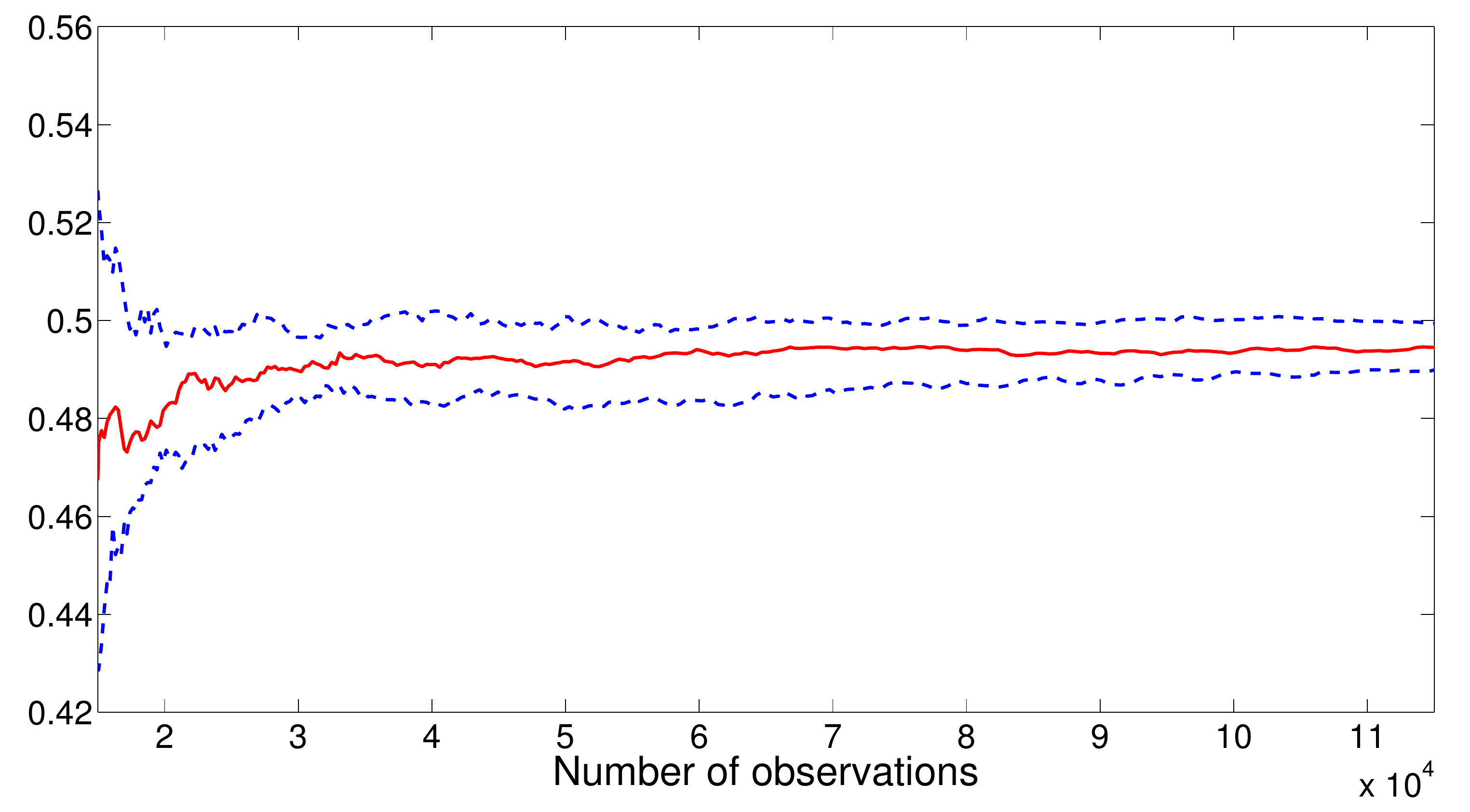}}
   \subfloat[The averaged RML algorithm.]{\includegraphics[width=0.45\textwidth]{./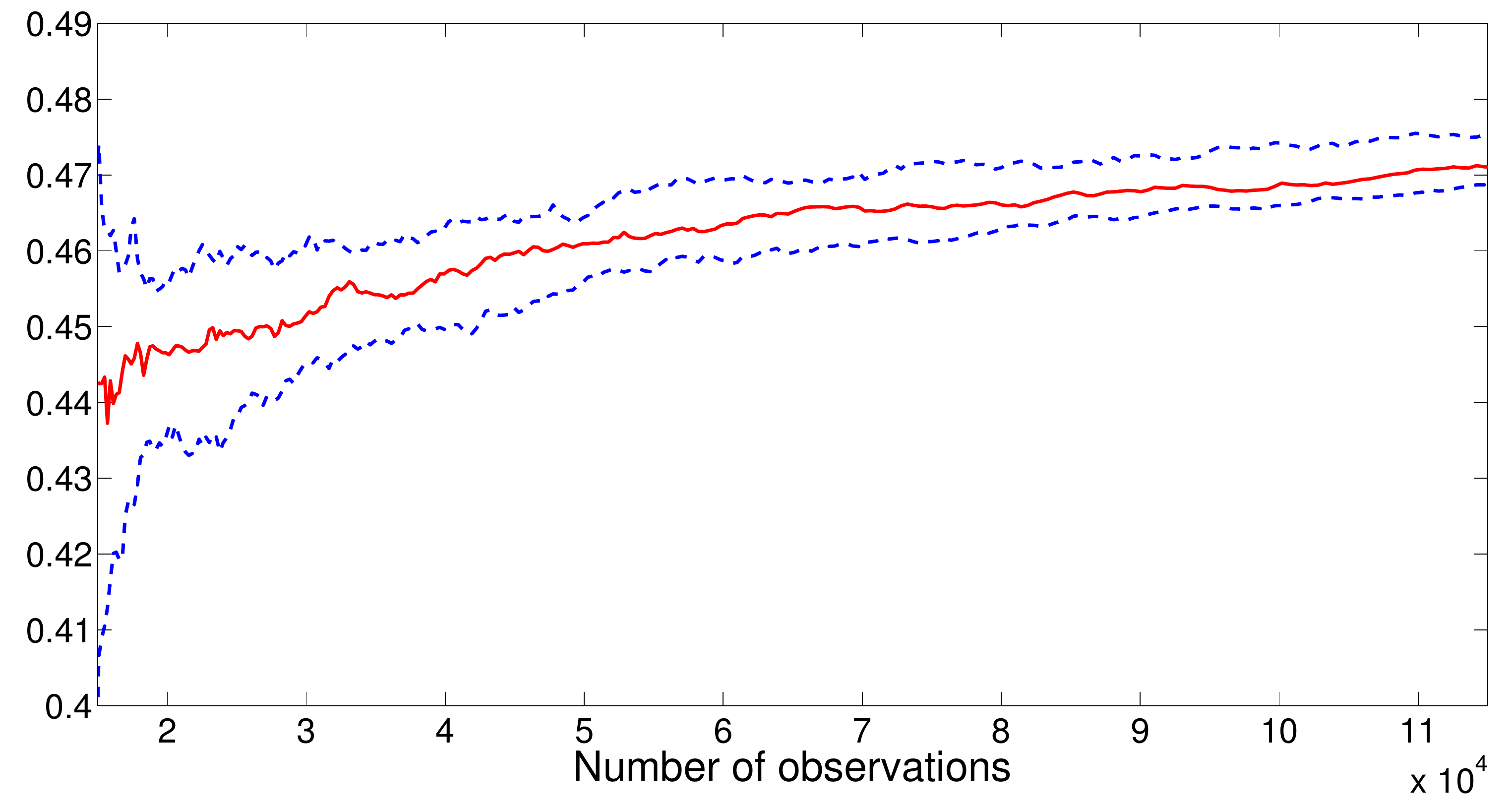}}
   \caption{Empirical median (bold line) and first and last quartiles (dotted line) for the estimation of $m(1,1)$ using the averaged RML algorithm (right)  and the averaged BOEM algorithm (left). The true values is $m(1,1)=0.5$ and the averaging procedure is starter after $10000$ observations. The first $10000$ observations are not displayed for a better clarity.}
   \label{BOEM:fig:boxion}
 \end{figure}

\section{Convergence of the Block Online EM algorithms}
\label{BOEM:sec:convergence}

\subsection{Assumptions}
Consider the following assumptions.
\begin{hypA}
\label{BOEM:assum:exp}
  \begin{enumerate}[(a)]
  \item \label{BOEM:assum:exp:decomp}There exist continuous functions $\phi :
    \paramset \to \Rset$, $\psi : \paramset \to \Rset^d$ and $S: \Xset \times
    \Xset \times \Yset \to \Rset^d$ s.t.
\begin{equation*}
  \label{BOEM:eq:exponential:family}
  \log m_\param(x,x') + \log g_\param(x',y) = \phi(\param) +
  \pscal{S(x,x,',y)}{\psi(\param)} \eqsp,
\end{equation*}
where $\pscal{\cdot}{\cdot}$ denotes the scalar product on $\Rset^d$.
\item \label{BOEM:assum:exp:convex} There exists an open subset $\Sset$ of $\Rset^d$
  that contains the convex hull of $S(\Xset \times \Xset \times \Yset)$.
\item \label{BOEM:assum:exp:max}There exists a continuous function $\bar \param :
  \Sset \to \paramset$ s.t. for any $s \in \Sset$,
\[
\bar \param(s) = \mathrm{argmax}_{\param \in \paramset} \; \left\{ \phi(\param) +
  \pscal{s}{\psi(\param)} \right\} \eqsp.
\]
  \end{enumerate}
\end{hypA}

\label{BOEM:sec:assum}
\begin{hypA}\label{BOEM:assum:strong}
  There exist $\sigma_{-}$ and $\sigma_{+}$ s.t. for any
  $\left(x,x^{\prime}\right)\in\Xset^2$ and any $\param \in \paramset$, $0
  <\sigma_{-} \leq m_\param(x,x^{\prime})\leq \sigma_{+}$.  Set $\rho \eqdef 1 -
  (\sigma_-/\sigma_+)\eqsp.$
\end{hypA}
A\ref{BOEM:assum:strong}, often referred to as the strong mixing condition, is commonly used to prove the forgetting property of the
initial condition of the filter, see e.g.
\cite{delmoral:guionnet:1998,delmoral:ledoux:miclo:2003}. This assumption holds
for example if $\Xset$ is finite or for linear state-spaces with truncated gaussian state and measurement noises.
More generally, this condition holds when $\Xset$ is compact. Note in addition that by \cite[Theorem~$16.0.2$]{meyn:tweedie:1993}, A\ref{BOEM:assum:strong} implies that the Markov kernel $M_\param$ has a unique invariant distribution which guarantees the existence of the unique invariant distribution $\pi_\param$ for $K_\param$.

We now introduce assumptions on the observation process $\bfY\eqdef\{Y_k\}_{k\in\Zset}$. It is defined on some probability space $(\Omega,\mathcal{F},\PPim)$. We stress that this process is not necessarily the observation of an HMM. Let
\begin{equation}
\label{BOEM:eq:Zfield}
\mathcal{F}_{k}^{\bfY} \eqdef \sigma\left(\{Y_{u}\}_{u\leq k}\right)\quad \mbox{and}\quad \mathcal{G}_{k}^{\bfY} \eqdef \sigma\left(\{Y_{u}\}_{u\geq k}\right)
\end{equation}
be $\sigma$-fields associated to $\bfY$. We also define the $\beta$-mixing coefficients
by, see \cite{davidson:1994},
\begin{equation}
\label{BOEM:eq:def:mixing}
\mix^{\bfY}(n)=\underset{u\in\Zset}{\sup}\,\underset{B\in\mathcal{G}_{u+n}^{\bfY}}{\sup}\,\CE[]{}{}{|\PP(B| \mathcal{F}_{u}^{\bfY}) - \PP(B)|}\eqsp, \forall\; n\geq0\eqsp.
\end{equation}

\begin{hypA}\label{BOEM:assum:moment:sup}
\hspace{-0.15cm}-($p$)  $\CE[]{}{}{\sup_{x,x' \in \Xset^2} \, |S(x,x',Y_{0})|^{p}}<+\infty$.
\end{hypA}
\begin{hypA}\label{BOEM:assum:obs}
\begin{enumerate}[(a)]
\item \label{BOEM:assum:obs:mix}$\bfY$ is a $\beta$-mixing stationary sequence such that there exist $C\in[0,1)$ and $\mix\in(0,1)$ satisfying, for any $n\geq 0$,
  $\mix^{\bfY}(n) \leq C\mix^{n}$, where $\mix^{\bfY}$ is defined in
  \eqref{BOEM:eq:def:mixing}.
  \item \label{BOEM:assum:obs:b+:b-} $\CE[]{}{}{|\log b_-(Y_0)| +|\log b_+(Y_0)| }<+\infty$ where
    \begin{eqnarray*}
      b_-(y) & \eqdef  \inf_{\param \in \paramset} \int g_\param(x,y) \lambda(\rmd x)  \eqsp, \\
  b_+(y) & \eqdef  \sup_{\param \in \paramset} \int g_\param(x,y) \lambda(\rmd x) \eqsp.
    \end{eqnarray*}
    \end{enumerate}
\end{hypA}
Upon noting that, for all $n\geq 0$, $\mix^{\bfY}(n)\leq \mix^{(\bfX,\bfY)}(n)$, we can prove that A\ref{BOEM:assum:obs}\eqref{BOEM:assum:obs:mix} holds when $\bfY$ is the observation process of a an HMM under classical geometric ergodicity conditions \cite[Chapter~$15$]{meyn:tweedie:1993} and \cite[Chapter~$14$]{cappe:moulines:ryden:2005}.

\begin{hypA}\label{BOEM:assum:size-block}
There exists $c>0$ and $a>1$ such that for all $n\ge 1$, $\tau_{n} = \lfloor cn^{a}\rfloor$.
\end{hypA}
For $p>0$ and $Z$ a random variable measurable w.r.t. the $\sigma$-algebra
$\sigma\left( Y_n, n\in \Zset\right)$, set $\lpnorm{Z}{p}\eqdef
\left(\CE{}{}{ |Z|^p}\right)^{1/p}$.
\begin{hypA}\hspace{-0.1cm}-($p$)\label{BOEM:assum:SMCapprox}
There exists $b\ge (a+1)/2a$ (where $a$ is defined in A\ref{BOEM:assum:size-block}) such that, for any $n\ge 0$,
  \[
  \lpnorm{S_{n} - \widetilde S_{n}}{p} = O(\tau_{n+1}^{-b})\eqsp,
  \]
  where $ \widetilde S_{n}$ is the Monte Carlo approximation of $S_{n}$ which is defined by \eqref{BOEM:eq:Bonem:recursion}.
\end{hypA}
A\ref{BOEM:assum:SMCapprox} gives a $\rmL_{p}$ control of the Monte Carlo error on each block. In \cite[Theorem~$1$]{dubarry:lecorff:2012}, such bounds are given for Sequential Monte Carlo algorithms. Practical conditions to ensure A\ref{BOEM:assum:SMCapprox} are given in \cite{lecorff:fort:2012} in the case of Sequential Monte Carlo methods.

\subsection{The limiting EM algorithm}
\label{BOEM:sec:BonEM:EM}
In the sequel, $\mathcal{M}(\Xset)$ denotes the set of all probability distributions on $(\Xset,\sigmaX)$.
\begin{theorem}\label{BOEM:th:LGN}
  Let $\bar p>2$. Assume that A\ref{BOEM:assum:exp}-\ref{BOEM:assum:strong}, A\ref{BOEM:assum:moment:sup}-($\bar p$) and
  A\ref{BOEM:assum:obs} hold.
  \begin{enumerate}[i)]
  \item \label{BOEM:th:LGN:phi}For any $\param\in\paramset$, there
  exists a r.v. $\limE{\param}{\bfY}$ s.t. 
  \begin{multline}
  \label{BOEM:th:LGN:smooth}
  \underset{\param\in\paramset,\,\chi\in\mathcal{M}(\Xset)}{\sup}\left|\CE[Y_{-\tau:\tau}]{\chi,-\tau-1}{\param}{S(X_{-1},X_{0},Y_{0})} - \limE{\param}{\bfY}\right| \\
  \leq C\rho^{\tau}\underset{(x,x')\in\Xset^{2}}{\sup}\left|S(x,x',Y_{0})\right|\eqsp,\quad
  \ps{\PPim}\eqsp,
   \end{multline}
   where $C$ is a finite constant.
    Define  for all $\param\in\paramset$,
\begin{equation}
\label{BOEM:eq:mapS}
\mapS(\param) \eqdef \CE[]{}{}{\limE{\param}{\bfY}}\eqsp.
\end{equation}
\item \label{BOEM:th:LGN:continuous}$\param \mapsto \mapS(\param)$ is continuous on $\paramset$ and for any $T>0$,
   \begin{equation}
   \label{BOEM:th:LGN:ergodic}
  \bar S_{\tau}^{\chi,T}(\param, \bfY)
    \underset{\tau\rightarrow
      +\infty}{\longrightarrow} \mapS(\param)\eqsp,\quad\ps{\PPim}\eqsp,
\end{equation}
where $\bar S_{\tau}^{\chi,T}(\param, \bfY)$ is defined by \eqref{BOEM:eq:rewrite:barS}.
\item  \label{BOEM:th:LGN:lp}Assume in addition that A\ref{BOEM:assum:SMCapprox}-($\bar p$) holds. For any $p\in(2,\bar p)$, there exists a constant $C$ s.t. for any $n\geq 1$,
\begin{equation*}
  \lpnorm{\widetilde S_{n}-\mapS(\param_n)}{p}
\leq  \frac{C}{\sqrt{\tau_{n+1}}}  \eqsp,
\end{equation*}
where $\widetilde S_{n}$ is the Monte Carlo approximation of $S_{n}$ defined by \eqref{BOEM:eq:Bonem:recursion}.
\end{enumerate}
\end{theorem}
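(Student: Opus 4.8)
The plan is to treat the three assertions in turn, with the forgetting property of the fixed-interval smoother under A\ref{BOEM:assum:strong} as the common engine. For part~\ref{BOEM:th:LGN:phi}, I would fix a realisation of $\bfY$ and argue that the family $\{\CE[Y_{-\tau:\tau}]{\chi,-\tau-1}{\param}{S(X_{-1},X_0,Y_0)}\}_\tau$ is Cauchy with geometric rate, uniformly in $(\param,\chi)$. Comparing the windows $Y_{-\tau:\tau}$ initialised at $-\tau-1$ and $Y_{-\tau':\tau'}$ initialised at $-\tau'-1$ with $\tau'>\tau$, the extra past observations and the displaced initial law reach the marginal smoothing law of $(X_{-1},X_0)$ only through a chain of length at least $\tau$ on the left, and symmetrically on the right. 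Under A\ref{BOEM:assum:strong} the smoothing kernels contract in total variation with the uniform coefficient $\rho=1-\sigma_-/\sigma_+$ (the standard forgetting estimate, see e.g.\ \cite{delmoral:guionnet:1998}), so each contribution is at most $C\rho^{\tau}\sup_{(x,x')\in\Xset^2}|S(x,x',Y_0)|$. Since this envelope is $\ps{\PPim}$ finite (indeed integrable by A\ref{BOEM:assum:moment:sup}-($\bar p$), as $\bar p>1$), the sequence converges $\ps{\PPim}$; its limit defines $\limE{\param}{\bfY}$ and the same contraction yields \eqref{BOEM:th:LGN:smooth} with $C,\rho$ free of $\param,\chi$. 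As $|\limE{\param}{\bfY}|\le\sup_{x,x'}|S(x,x',Y_0)|$, the quantity $\mapS(\param)=\CE[]{}{}{\limE{\param}{\bfY}}$ is well defined.

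For part~\ref{BOEM:th:LGN:continuous}, continuity of $\mapS$ follows by noting that each finite-window conditional expectation is continuous in $\param$ — being a ratio of integrals of the continuous integrand $m_\param g_\param=\exp\{\phi(\param)+\pscal{S}{\psi(\param)}\}$ (A\ref{BOEM:assum:exp}) whose denominator is bounded below by A\ref{BOEM:assum:strong} — so the uniform limit $\param\mapsto\limE{\param}{\bfY}$ is continuous $\ps{\PPim}$, and dominated convergence with the envelope $\sup_{x,x'}|S(x,x',Y_0)|$ transfers continuity to $\mapS$. For the ergodic limit \eqref{BOEM:th:LGN:ergodic}, I would write $\bar S_{\tau}^{\chi,T}(\param,\bfY)=\frac1\tau\sum_{t=T+1}^{T+\tau}\limE{\param}{\shift^{t}\bfY}+R_\tau$, where $\shift$ is the shift and, by \eqref{BOEM:th:LGN:smooth} applied inside the block, the error at index $t$ is at most $C(\rho^{t-T-1}+\rho^{T+\tau-t})\sup_{x,x'}|S(x,x',Y_t)|$. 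Summable geometric weights together with $\max_{T<t\le T+\tau}\sup_{x,x'}|S(x,x',Y_t)|=o(\tau)$ $\ps{\PPim}$ (a Borel–Cantelli consequence of $\CE[]{}{}{\sup_{x,x'}|S(x,x',Y_0)|}<\infty$) force $R_\tau\to0$. As $\bfY$ is stationary $\beta$-mixing (A\ref{BOEM:assum:obs}), hence ergodic, Birkhoff's theorem gives $\frac1\tau\sum_t\limE{\param}{\shift^{t}\bfY}\to\CE[]{}{}{\limE{\param}{\bfY}}=\mapS(\param)$ $\ps{\PPim}$, which is \eqref{BOEM:th:LGN:ergodic}.

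For part~\ref{BOEM:th:LGN:lp}, I split $\widetilde S_n-\mapS(\param_n)=(\widetilde S_n-S_n)+(S_n-\mapS(\param_n))$. The first term is $O(\tau_{n+1}^{-b})$ by A\ref{BOEM:assum:SMCapprox}-($\bar p$), and since $b\ge(a+1)/(2a)>1/2$ this is $O(\tau_{n+1}^{-1/2})$. For the fluctuation $S_n-\mapS(\param_n)=\tau_{n+1}^{-1}\sum_{t=T_n+1}^{T_n+\tau_{n+1}}\{\Phi_t(\param_n)-\mapS(\param_n)\}$ with $\Phi_t$ the within-block smoother term, I would again use the forgetting bound to replace $\Phi_t(\param)$ by $\limE{\param}{\shift^t\bfY}$ at an $\rmL_p$ cost of order $\tau_{n+1}^{-1}$ (summable weights divided by $\tau_{n+1}$, the summands having finite $p$-th moment since $p<\bar p$). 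The core estimate is then an $\rmL_p$ bound, uniform in $\param$, for the centred average $\tau^{-1}\sum_t\{\limE{\param}{\shift^t\bfY}-\mapS(\param)\}$: because $\bfY$ is geometrically $\beta$-mixing and the summands have finite $\bar p$-th moment with $\bar p>p>2$, a Rosenthal/Marcinkiewicz-type moment inequality for partial sums of mixing sequences (see \cite{davidson:1994}) yields the rate $\tau^{-1/2}$.

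The delicate point — and the step I expect to be the main obstacle — is that $\param_n$ (and the initial law $\chi_n$) are random and $\mathcal{F}_{T_n}^{\bfY}$-measurable, whereas the block $Y_{T_n+1:T_n+\tau_{n+1}}$ driving $S_n$ is only asymptotically independent of the past. Since $T_n$ is deterministic, I would bypass plugging a random parameter into a stochastic bound by passing to the uniform estimate $\lpnorm{S_n-\mapS(\param_n)}{p}\le\lpnorm{\sup_{\param\in\paramset}|\bar S_{\tau_{n+1}}^{\chi_n,T_n}(\param,\bfY)-\mapS(\param)|}{p}$, the dependence on $\chi_n$ being removed uniformly by forgetting. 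Proving the partial-sum moment inequality \emph{simultaneously} over the compact $\paramset$ — via the continuity from part~\ref{BOEM:th:LGN:continuous}, the $\param$-uniform forgetting constant $\rho$, and a covering/chaining argument — while preserving the $\tau_{n+1}^{-1/2}$ rate and a constant independent of $n$, is where the real work lies; the forgetting and ergodic arguments of the earlier parts are comparatively routine.
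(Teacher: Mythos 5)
Your parts i) and ii) are correct and essentially follow the paper's route: the $(\param,\chi)$-uniform geometric forgetting bound (the paper's Proposition~\ref{BOEM:prop:expforget}\eqref{BOEM:item:forget-limit}) defines $\limE{\param}{\bfY}$ as a Cauchy limit; continuity of the finite-window smoothers in $\param$ (the paper's Lemma~\ref{BOEM:lem:continuite:theta:Phi}, via the exponential-family representation and dominated convergence) plus uniform convergence and the envelope $\sup_{x,x'}|S(x,x',Y_0)|$ give continuity of $\mapS$; and Birkhoff's theorem handles the ergodic average. Your treatment of the remainder via $\max_{t\le\tau}\osc(S_{t+T})=o(\tau)$ by Borel--Cantelli is a legitimate, slightly more elementary variant of the paper's Abel-transform argument, which instead applies the ergodic theorem to the Ces\`aro averages $Z_t=t^{-1}\sum_{s\le t}\osc(S_{s+T})$; both are fine.

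Part iii) has a genuine gap, and it is precisely the step you flag as the ``main obstacle'' and then leave open. First, a technical slip: you replace the within-block smoothers by the infinite-window limits $\limE{\param}{\shift^t\bfY}$ and then invoke a Rosenthal-type moment inequality for mixing sequences; but $\limE{\param}{\shift^t\bfY}$ is measurable only with respect to the entire trajectory, so the $\beta$-mixing coefficients of $\bfY$ do not transfer to these summands without first re-truncating to windows of width $2m$ at geometric cost $\rho^m$. This is why the paper works throughout with the finite-window statistics $\smoothfunc{\chi,t-m_n}{\param,t,t+m_n}(S,\cdot)$, a blocking scheme $\tau_{n+1}=2v_nm_n+r_n$ with $m_n\asymp\log\tau_{n+1}$, and only then applies Rio's $\rmL_p$ inequality (Lemmas~\ref{BOEM:lem:lpcontrol:dec} and~\ref{BOEM:lem:lpcontrol:dec:rho}). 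Second, and more fundamentally, your proposed resolution of the random-parameter problem --- passing to $\sup_{\param\in\paramset}$ and proving a uniform maximal moment inequality by covering/chaining --- is not carried out, and it is doubtful it goes through under the stated assumptions: A\ref{BOEM:assum:exp} gives continuity in $\param$ but no quantitative modulus, and chaining would require Lipschitz-in-$\param$ controls on the window-$m_n$ smoothers with moment bounds on the Lipschitz constants, none of which are established. The paper avoids the supremum altogether via Berbee's coupling lemma (see \cite[Chapter~5]{rio:1999}): since $\param_n$ is $\widetilde{\mathcal{F}}_{T_n}$-measurable (note it also carries Monte Carlo randomness, which the paper shows leaves the relevant mixing coefficients unchanged) and $\bfY$ is geometrically $\beta$-mixing, one constructs a copy $\boldsymbol{\upsilon}^{\star}$ of $\param_n$ independent of the block's future $\sigma$-field with $\PP\left(\boldsymbol{\upsilon}^{\star}\neq\param_n\right)\leq C\mix^{m_n}$; conditionally on the copy the parameter is frozen, the fixed-$\param$ moment bound applies pointwise, and the coupling error contributes a term $\mix^{m_n\Delta p}$ absorbed by the choice $m_n\asymp\log\tau_{n+1}$ (Lemma~\ref{BOEM:lem:lpcontrol}). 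Without this device --- or a fully worked-out uniform chaining bound, which would be substantially harder --- your proof of part iii) is incomplete; the splitting off of the Monte Carlo error via A\ref{BOEM:assum:SMCapprox}-($\bar p$) and $b\geq 1/2$ is, by contrast, exactly as in the paper.
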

Theorem~\ref{BOEM:th:LGN}  allows to introduce the limiting EM algorithm, defined as the
deterministic iterative algorithm $\check{\param}_n =
\limEMmapt(\check{\param}_{n-1})$ where
\begin{equation}
  \label{BOEM:eq:EM:algorithm}
\limEMmapt(\param) \eqdef  \bar \param \left( \mapS(\param)\right) \eqsp.
\end{equation}
The limiting EM can be seen as an EM algorithm applied as if the whole trajectory $\bfY$ was observed instead of $Y_{0:T}$. For this limiting EM, the so-called sufficient statistics depend on the observations only through the mean $\CE[]{}{}{\limE{\param}{\bfY}}$.
The stationary points of the limiting EM are defined as
\begin{equation}
\label{BOEM:eq:def:staset}
\Staset\eqdef \left\{\param\in\paramset;\; \limEMmapt(\param)=
  \param\right\}\eqsp.
  \end{equation}
We show that there exists a Lyapunov function $\lyap$ w.r.t. to the map $\limEMmapt$ and the set $\Staset$ {\em i.e.}, a continuous function $W$ satisfying the two conditions:
 \begin{enumerate}[(i)]
 \item \label{BOEM:prop:lyap:claim1}for all $\param\in\paramset$, $\lyap\circ\limEMmapt(\param)-\lyap(\param)\geq 0\eqsp,$
 \item \label{BOEM:prop:lyap:claim2}for all compact set $\mathcal{K}\subset
   \paramset\setminus\Staset$,
   $\inf_{\param\in\mathcal{K}}\left\{\lyap\circ\limEMmapt(\param)-\lyap(\param)\right\}>0\eqsp.$
\end{enumerate}
For such a function, the sequence $\{\lyap(\check\param_k)\}_{k\ge 0}$ is nondecreasing and $\{\check\param_k\}_{k\ge 0}$ converges to $\Staset$.

Define, for any $m\ge 0$, $\param\in\paramset$ and probability distribution $\chi$ on $(\Xset,\sigmaX)$,
\begin{multline*}
p_{\param}^{\chi}\left(Y_{1}\middle| Y_{-m:0}\right)\\
\eqdef\frac{\int \chi(\rmd x_{-m})g_{\param}(x_{-m},Y_{m})\prod_{i=-m+1}^{1}\left\{m_{\param}(x_{i-1},x_{i})g_{\param}(x_{i},Y_{i})\right\}\lambda(\rmd x_{-m+1:1})}{\int \chi(\rmd x_{-m})g_{\param}(x_{-m},Y_{m})\prod_{i=-m+1}^{0}\left\{m_{\param}(x_{i-1},x_{i})g_{\param}(x_{i},Y_{i})\right\}\lambda(\rmd x_{-m+1:0})}\eqsp.
\end{multline*}
By \cite[Lemma~$2$ and Proposition~$1$]{douc:moulines:ryden:2004}, under A\ref{BOEM:assum:exp}-\ref{BOEM:assum:obs}, for any $\param\in\paramset$, there exists a random variable $\log p_{\param}\left(Y_{1}\middle| Y_{-\infty:0}\right)$, such that for any probability distribution $\chi$ on $(\Xset,\sigmaX)$, $\log p_{\param}\left(Y_{1}\middle| Y_{-\infty:0}\right)$ is the a.s. limit of $\log p_{\param}^{\chi}\left(Y_{1}\middle| Y_{-m:0}\right)$ as $m\to +\infty$ and
\begin{equation}
\label{BOEM:eq:limitlik}
T^{-1}\ell_{\param,T}^{\chi}(\bfY)\underset{T\to+\infty}{\longrightarrow}\ell(\param)\eqdef\CE[]{}{}{\log p_{\param}\left(Y_{1}\middle| Y_{-\infty:0}\right)}\eqsp,\; \ps{\PPim}\eqsp,
\end{equation}
where $\ell_{\param,T}^{\chi}(\bfY)$ is the log-likelihood defined by \eqref{BOEM:eq:loglik}. The function $\param\mapsto \ell(\param)$ may be interpreted as the limiting log-likelihood. We consider the function $W$, given, for all $\param\in\paramset$, by
\begin{equation}
\label{BOEM:def:lyap}
\lyap(\param) \eqdef \exp\left\{\ell(\param)\right\}\eqsp.
\end{equation}
To identify the stationary points of the limiting EM algorithm as the stationary points of $\ell$, we introduce an additional assumption.
\begin{hypA}
\label{BOEM:assum:regularity-grad}
\begin{enumerate}[(a)]
\item \label{assum:regularity:diff} For any $y\in\Yset$ and for all $(x,x^{\prime})\in\Xset^{2}$,
  $\param\mapsto g_{\param}(x,y)$  and $\param\mapsto m_{\param}(x,x^{\prime})$ are continuously differentiable on
  $\paramset$.
\item  \label{BOEMsupp:assum:regularity:phi} $\mathbb{E}\left[\phi(\bfY_{0})\right]<+\infty$ where
\begin{equation*}
\phi(y) \eqdef \underset{\param\in\paramset}{\sup}\;\underset{(x,x^{\prime})\in\Xset^{2}}{\sup}\left|\nabla_{\param}\log m_{\param}(x,x^{\prime})+ \nabla_{\param}\log g_{\param}(x^{\prime},y)\right|\eqsp.
\end{equation*}
\end{enumerate}
\end{hypA}

\begin{proposition}\label{BOEM:prop:lyap}
  Assume that A\ref{BOEM:assum:exp}-\ref{BOEM:assum:strong}, A\ref{BOEM:assum:moment:sup}-$(1)$ and
  A\ref{BOEM:assum:obs} hold. Then, the function $\lyap$ given by \eqref{BOEM:def:lyap} is a Lyapunov function for $(\limEMmapt,\Staset)$. Assume in addition that A\ref{BOEM:assum:regularity-grad} holds. Then, $\param\mapsto \ell(\param)$ is continuously differentiable and
  \[
  \Staset =  \left\{\param\in\paramset;\; \limEMmapt(\param)=
  \param\right\} = \{\param\in\paramset;\;\nabla\ell(\param)=0\}\eqsp.
  \]
\end{proposition}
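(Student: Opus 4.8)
The plan is to transport the classical EM monotonicity argument to the limiting (infinite-horizon) objects. Write $h_{\param}(\param')\eqdef\phi(\param')+\pscal{\mapS(\param)}{\psi(\param')}$, so that by A\ref{BOEM:assum:exp}\eqref{BOEM:assum:exp:max} the map $\limEMmapt(\param)=\bar\param(\mapS(\param))$ is the unique maximiser of $\param'\mapsto h_{\param}(\param')$. The cornerstone is the \emph{limiting EM inequality}
\[
\ell(\param')-\ell(\param)\;\ge\;h_{\param}(\param')-h_{\param}(\param)\qquad\text{for all }\param,\param'\in\paramset,
\]
which I would obtain by passing to the limit in the classical finite-horizon EM inequality $T^{-1}\ell_{\param',T}^{\chi}(\bfY)-T^{-1}\ell_{\param,T}^{\chi}(\bfY)\ge T^{-1}[Q(\param',\param)-Q(\param,\param)]$, where $Q(\param',\param)\eqdef\CE[Y_{1:T}]{\chi}{\param}{\log p_{\param'}(X_{0:T},Y_{1:T})}$. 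The left-hand side converges $\ps{\PPim}$ to $\ell(\param')-\ell(\param)$ by \eqref{BOEM:eq:limitlik}. Using the exponential-family form A\ref{BOEM:assum:exp}\eqref{BOEM:assum:exp:decomp}, the normalised quantity $T^{-1}Q(\param',\param)$ equals $\phi(\param')+\pscal{T^{-1}\sum_{t=1}^{T}\CE[Y_{1:T}]{\chi}{\param}{S(X_{t-1},X_{t},Y_{t})}}{\psi(\param')}$ up to an $O(T^{-1})$ initialisation term; the Cesàro average of the smoothed statistics coincides with $\bar S_{T}^{\chi,0}(\param,\bfY)$ of \eqref{BOEM:eq:rewrite:barS} and converges $\ps{\PPim}$ to $\mapS(\param)$ by \eqref{BOEM:th:LGN:ergodic}. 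Letting $T\to+\infty$ yields the displayed inequality.

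Granting the limiting EM inequality, condition~\eqref{BOEM:prop:lyap:claim1} is immediate: taking $\param'=\limEMmapt(\param)$ gives $\ell(\limEMmapt(\param))-\ell(\param)\ge h_{\param}(\limEMmapt(\param))-h_{\param}(\param)\ge 0$ because $\limEMmapt(\param)$ maximises $h_{\param}$, whence $\lyap\circ\limEMmapt(\param)\ge\lyap(\param)$. For condition~\eqref{BOEM:prop:lyap:claim2}, set $\Delta(\param)\eqdef\lyap\circ\limEMmapt(\param)-\lyap(\param)\ge 0$. This map is continuous, since $\limEMmapt=\bar\param\circ\mapS$ is continuous by A\ref{BOEM:assum:exp}\eqref{BOEM:assum:exp:max} and the continuity of $\mapS$ from Theorem~\ref{BOEM:th:LGN}, while $\lyap=\exp\circ\,\ell$ with $\ell$ continuous (a by-product of the construction underlying \eqref{BOEM:eq:limitlik}). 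I would then show $\Delta(\param)=0$ if and only if $\param\in\Staset$: indeed $\Delta(\param)=0$ forces equality throughout $0=\ell(\limEMmapt(\param))-\ell(\param)\ge h_{\param}(\limEMmapt(\param))-h_{\param}(\param)\ge 0$, so $h_{\param}(\param)=\max_{\param'}h_{\param}(\param')$, and uniqueness of the maximiser in A\ref{BOEM:assum:exp}\eqref{BOEM:assum:exp:max} gives $\param=\bar\param(\mapS(\param))=\limEMmapt(\param)$, i.e. $\param\in\Staset$; the converse is clear. On a compact $\K\subset\paramset\setminus\Staset$ the continuous function $\Delta$ is then strictly positive, so its infimum is attained and positive, which is condition~\eqref{BOEM:prop:lyap:claim2}.

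For the second part, I would first prove under A\ref{BOEM:assum:regularity-grad} that $\param\mapsto\ell(\param)$ is continuously differentiable, together with Fisher's identity $\nabla\ell(\param)=\nabla_{\param'}h_{\param}(\param')|_{\param'=\param}$. The route is to differentiate the finite-memory predictors $\log p_{\param}^{\chi}(Y_{1}\mid Y_{-m:0})$ — whose $\param$-gradients are dominated, through the exponential-family score $\nabla_{\param}[\phi(\param)+\pscal{S}{\psi(\param)}]$, by the integrable majorant of A\ref{BOEM:assum:regularity-grad}\eqref{BOEMsupp:assum:regularity:phi} — and then to let $m\to+\infty$, interchanging limit, gradient and expectation. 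Once Fisher's identity holds, the set equality follows from first-order conditions: if $\param\in\Staset$ then $\param=\limEMmapt(\param)$ maximises $h_{\param}$, so at an interior optimum $\nabla_{\param'}h_{\param}(\param')|_{\param'=\param}=0$ and hence $\nabla\ell(\param)=0$; conversely $\nabla\ell(\param)=0$ yields $\nabla_{\param'}h_{\param}(\param')|_{\param'=\param}=0$, so $\param$ solves the stationarity equation of $h_{\param}$, and identifying $\bar\param(\mapS(\param))$ as its unique stationary point gives $\param=\limEMmapt(\param)$, i.e. $\param\in\Staset$.

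The step I expect to be the main obstacle is the rigorous proof that $\ell\in C^{1}$ with Fisher's identity. In contrast with the finite-horizon case, $\ell$ is the expectation of an infinite-memory log-predictor, so one must quantify the convergence of the gradients of $\log p_{\param}^{\chi}(Y_{1}\mid Y_{-m:0})$ \emph{uniformly} in $\param$ and justify differentiating under both the $m\to+\infty$ limit and the expectation; this rests on the uniform forgetting granted by A\ref{BOEM:assum:strong} (the Douc--Moulines--Ryd\'en machinery underlying \eqref{BOEM:eq:limitlik}) combined with the domination A\ref{BOEM:assum:regularity-grad}\eqref{BOEMsupp:assum:regularity:phi} and the moment bound A\ref{BOEM:assum:moment:sup}. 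A secondary delicate point is the inclusion $\{\nabla\ell=0\}\subseteq\Staset$, which requires $h_{\param}$ to possess $\bar\param(\mapS(\param))$ as its \emph{unique} stationary point — a structural consequence of the exponential-family M-step and interiority rather than of any concavity of $h_{\param}$.
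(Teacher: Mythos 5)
Your proposal follows essentially the same route as the paper's proof: the finite-horizon Jensen/EM inequality combined with the a.s.\ limits of Theorem~\ref{BOEM:th:LGN} and \eqref{BOEM:eq:limitlik} yields the limiting EM inequality and hence condition~\eqref{BOEM:prop:lyap:claim1}, the equality case plus uniqueness of the maximiser in A\ref{BOEM:assum:exp}\eqref{BOEM:assum:exp:max} gives condition~\eqref{BOEM:prop:lyap:claim2} via continuity and compactness, and the limiting Fisher identity $\nabla\ell(\param)=\nabla\phi(\param)+\nabla\psi'(\param)\mapS(\param)$ identifies $\Staset$ with the critical points of $\ell$. The only deviations are cosmetic --- you state the limiting inequality for arbitrary $\param'$ where the paper specialises to $\param'=\limEMmapt(\param)$, and you differentiate the $m$-memory predictors and let $m\to+\infty$ where the paper differentiates the finite-$T$ log-likelihood and delegates the limit interchange to its supplementary material --- and you even flag explicitly the tacit step the paper passes over silently, namely that $\bar\param(\mapS(\param))$ must be the unique \emph{stationary} point (not merely the unique maximiser, and with $\param$ interior) of $\tau\mapsto\phi(\tau)+\pscal{\mapS(\param)}{\psi(\tau)}$ for the inclusion $\{\nabla\ell=0\}\subseteq\Staset$.
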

Proposition~\ref{BOEM:prop:lyap} is proved in Section~\ref{BOEM:proof:prop:lyap}.

\begin{remark}
In the case where $\{Y_{k}\}_{k\ge 0}$ is the observation process of the stationary HMM $\{(X_{k},Y_{k})\}_{k\ge 0}$ parameterized by $\param_{\star}\in\paramset$, we can build a two-sided stationary extension of this process to obtain a sequence of observations $\{Y_{k}\}_{k\in\Zset}$. Following \cite[Proposition~$3$]{douc:moulines:ryden:2004}, the quantity $ \ell(\param)$ can be written as
\begin{align*}
\ell(\param) &= \mathbb{E}_{\param_{\star}}\left[\lim_{m\to +\infty}\log p_{\param}(Y_{1}|Y_{-m:0})\right]\\
&=\lim_{m\to +\infty}\mathbb{E}_{\param_{\star}}\left[\log p_{\param}(Y_{1}|Y_{-m:0})\right]\\
&=\lim_{m\to +\infty}\mathbb{E}_{\param_{\star}}\left[\mathbb{E}_{\param_{\star}}\left[\log p_{\param}(Y_{1}|Y_{-m:0})\middle|Y_{-m:0}\right]\right]\eqsp,
\end{align*}
where $p_{\param}(Y_{1}|Y_{-m:0})$ is the conditional distribution under the stationary distribution.
Since
\[
\mathbb{E}_{\param_{\star}}\left[\log p_{\param_{\star}}(Y_{1}|Y_{-m:0})\middle|Y_{-m:0}\right]-\mathbb{E}_{\param_{\star}}\left[\log p_{\param}(Y_{1}|Y_{-m:0})\middle|Y_{-m:0}\right]
\]
is the Kullback-Leibler divergence between $p_{\param_{\star}}(Y_{1}|Y_{-m:0})$ and $p_{\param}(Y_{1}|Y_{-m:0})$, for any $\param\in\paramset$, $\ell(\param_{\star}) - \ell(\param)\ge 0$ and $\param_{\star}$ is a maximizer of $\param\mapsto \ell(\param)$. If in addition $\param_{\star}$ lies in the interior of $\paramset$, then $\param_{\star}\in\Staset$.
\end{remark}

The following proposition gives sufficient conditions for the convergence of the limiting EM algorithm and the Monte Carlo BOEM algorithm to the set
$\Staset$.
\begin{theorem}\label{BOEM:th:bonem:conv}
Let $\bar p>2$. Assume that A\ref{BOEM:assum:exp}-\ref{BOEM:assum:strong}, A\ref{BOEM:assum:moment:sup}-($\bar p$) and
  A\ref{BOEM:assum:obs} hold. Assume in addition that $\lyap(\Staset)$ has an empty
   interior. For any initial value $\check{\param}_{0}\in\paramset$ , there exists $w_\star$ s.t. $\{\check{\param}_{k}\}_{k\geq 0}$ converges to $\{\param\in\Staset;\;\lyap(\param)=w_{\star}\}$.
   If in addition A\ref{BOEM:assum:size-block} and A\ref{BOEM:assum:SMCapprox}-($\bar p$) hold, then the sequence $\{\param_{n}\}_{n\geq 0}$ converges $\ps{\PPim}$ to the same stationary points.
\end{theorem}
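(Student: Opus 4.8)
The statement comprises a deterministic claim (convergence of the limiting EM recursion $\check\param_k=\limEMmapt(\check\param_{k-1})$) and a stochastic one (a.s. convergence of the Monte Carlo BOEM sequence $\{\param_n\}$), and the plan is to drive both by the Lyapunov function $\lyap=\exp\{\ell\}$ of Proposition~\ref{BOEM:prop:lyap}, treating BOEM as a vanishing perturbation of the limiting EM. For the deterministic claim, condition~\ref{BOEM:prop:lyap:claim1} makes $\{\lyap(\check\param_k)\}_k$ nondecreasing; since $\paramset$ is compact and $\lyap$ continuous, it is bounded and converges to some $w_\star$, so $\lyap\circ\limEMmapt(\check\param_k)-\lyap(\check\param_k)\to0$. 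For any limit point $\param_\infty=\lim_j\check\param_{k_j}$, continuity of $\lyap$ and $\limEMmapt$ gives $\lyap\circ\limEMmapt(\param_\infty)-\lyap(\param_\infty)=0$; were $\param_\infty\notin\Staset$, condition~\ref{BOEM:prop:lyap:claim2} applied to $\{\param_\infty\}\subset\paramset\setminus\Staset$ would force a strictly positive increment, a contradiction. Hence all limit points lie in $\Staset$ at level $w_\star$, and compactness of $\paramset$ yields $d(\check\param_k,\{\param\in\Staset:\lyap(\param)=w_\star\})\to0$.

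For the stochastic claim, write $\param_{n+1}=\bar\param(\widetilde S_n)$ and $\limEMmapt(\param_n)=\bar\param(\mapS(\param_n))$, so that the BOEM update is a perturbed limiting EM step with error $e_n\eqdef\widetilde S_n-\mapS(\param_n)$. The $\rmL_p$ bound of Theorem~\ref{BOEM:th:LGN} provides, for every $p\in(2,\bar p)$, a constant $C$ with $\lpnorm{e_n}{p}\le C\tau_{n+1}^{-1/2}$, so by Markov's inequality $\PPim(|e_n|>\epsilon)\le\epsilon^{-p}C^p\tau_{n+1}^{-p/2}$. Under A\ref{BOEM:assum:size-block}, $\tau_{n+1}^{-p/2}\asymp n^{-ap/2}$ with $a>1$ and $p>2$, whence $ap/2>1$ and $\sum_n\tau_{n+1}^{-p/2}<\infty$; Borel--Cantelli then gives $e_n\to0$, $\ps{\PPim}$. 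Since $\mapS(\param_n)$ and $\widetilde S_n$ remain in a fixed compact subset of the open set $\Sset$ (they are averages of values of $S$, up to the controlled Monte Carlo error), $\bar\param$ is uniformly continuous there and $d(\param_{n+1},\limEMmapt(\param_n))\le\omega_{\bar\param}(|e_n|)\to0$, $\ps{\PPim}$, with $\omega_{\bar\param}$ the modulus of continuity of $\bar\param$.

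What remains is a deterministic fact: any sequence $\{\param_n\}$ in the compact set $\paramset$ with $d(\param_{n+1},\limEMmapt(\param_n))\to0$, for the continuous map $\limEMmapt$ admitting the Lyapunov function $\lyap$ with stationary set $\Staset$ and $\lyap(\Staset)$ of empty interior, converges to a single level set $\{\param\in\Staset:\lyap(\param)=w_\star\}$. This is the heart of the matter and the expected main obstacle, because $\{\lyap(\param_n)\}$ is no longer monotone: one has $\lyap(\param_{n+1})-\lyap(\param_n)=\{\lyap\circ\limEMmapt(\param_n)-\lyap(\param_n)\}+r_n$ with a nonnegative principal term and an error $r_n\to0$ that need not be summable, so the Lyapunov values cannot be controlled by telescoping. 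The plan is to follow the perturbed-EM convergence scheme (in the spirit of the Monte Carlo EM analysis of Fort and Moulines): using $d(\param_{n+1},\limEMmapt(\param_n))\to0$ and the strict positivity of $\lyap\circ\limEMmapt-\lyap$ off $\Staset$, one first shows $d(\param_n,\Staset)\to0$ (ruling out persistent excursions away from $\Staset$ through the induced strict increase of $\lyap$ against its boundedness), which forces the increments $\lyap(\param_{n+1})-\lyap(\param_n)$ to vanish; the accumulation values of $\{\lyap(\param_n)\}$ then form an interval contained in $\lyap(\Staset)$, which the empty-interior assumption collapses to a single $w_\star$. Combining these yields $d(\param_n,\{\param\in\Staset:\lyap(\param)=w_\star\})\to0$, $\ps{\PPim}$, the same structural description of the limit as in the deterministic case.
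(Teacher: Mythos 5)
Your proposal follows essentially the same route as the paper: you treat the Monte Carlo BOEM sequence as a vanishing perturbation of the limiting EM map $\limEMmapt$, drive everything by the Lyapunov function of Proposition~\ref{BOEM:prop:lyap}, verify the vanishing-perturbation condition via the $\rmL_p$ bound of Theorem~\ref{BOEM:th:LGN}\ref{BOEM:th:LGN:lp} together with Markov's inequality and Borel--Cantelli under A\ref{BOEM:assum:size-block} (since $ap/2>1$) --- which is exactly the condition \eqref{BOEM:eq:fort:moulines:2003} that the paper checks in the supplement following \cite[Proposition~11]{fort:moulines:2003} --- and then appeal to the perturbed-Lyapunov stability result that the paper invokes as Proposition~\ref{BOEM:prop:fort:moulines:2003} (i.e.\ \cite[Proposition~9]{fort:moulines:2003}). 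The one caveat is that your inline sketch of that stability result is looser than its actual proof --- boundedness of $\lyap$ alone does not rule out persistent excursions from $\Staset$ when the negative perturbation increments are non-summable, so one cannot ``first show $d(\param_n,\Staset)\to 0$''; the crossing argument showing that the accumulation values of $\{\lyap(\param_n)\}$ form an interval whose interior lies in $\lyap(\Staset)$, collapsed by the empty-interior hypothesis, is what does the work --- but since you, like the paper, ultimately defer this lemma to Fort and Moulines, this does not affect the correctness of your argument.
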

Theorem~\ref{BOEM:th:bonem:conv} is a direct application of Proposition~\ref{BOEM:prop:fort:moulines:2003} for the limiting EM algorithm. The proof for the Monte Carlo BOEM algorithm is detailed in Section~\ref{BOEM:proof:th:bonem:conv}.
By Sard's theorem if $\lyap$ is at least $d_{\param}$ (where $\paramset\subset \Rset^{d_{\param}}$) continuously differentiable, then $\lyap(\Staset)$ has Lebesgue measure $0$ and hence has an empty interior.



\section{Rate of convergence of the Block Online EM algorithms}
\label{BOEM:sec:averaging}
We address the rate of convergence of the Monte Carlo BOEM algorithms to a point $\param_{\star}\in\Staset$. It is assumed that
\begin{hypA}\label{BOEM:assum:stable:fixpoint}
  \begin{enumerate}[(a)]
  \item \label{BOEM:assum:stable:fixpoint:C2}$\mapS$ and $\bar\param$ are twice continuously differentiable on $\paramset$ and $\Sset$.
  \item \label{BOEM:assum:stable:fixpoint:norm}There exists $0<\gamma<1$ s.t. the spectral radius of $\nabla_{s}(\mapS\circ\bar\param)_{s=\mapS(\param_{\star})}$ is lower than $\gamma$.
      \end{enumerate}
 \end{hypA}

Hereafter, for any sequence of random variables $\{Z_{n}\}_{n\geq 0}$, write
$Z_{n} = O_{\rmL_{p}}(1)$ if
$\sup_{n}\CE[]{}{}{|Z_{n}|^{p}}<\infty$ and $Z_{n} = O_{\mathrm{a.s}}(1)$ if $\sup_{n}|Z_{n}|<+\infty$ $\ps{\PPim}$

\begin{theorem}
\label{BOEM:th:rate:nonaverage}
Let $\bar p>2$. Assume that A\ref{BOEM:assum:strong}, A\ref{BOEM:assum:moment:sup}-($\bar p$),
A\ref{BOEM:assum:obs}-\ref{BOEM:assum:size-block}, A\ref{BOEM:assum:SMCapprox}-($\bar p$) and A\ref{BOEM:assum:stable:fixpoint} hold. Then, for any $p\in(2,\bar p)$,
\begin{equation}
  \label{BOEM:eq:rate:nonaverage}
\sqrt{\tau_n} \ \left[\param_{n} - \param_{\star}\right]\1_{\lim_{n}\param_{n}=\param_{\star}} = O_{\rmL_{p}}(1)+  \frac{1}{\sqrt{\tau_n}}O_{\rmL_{p/2}}(1)  O_{\mathrm{a.s}}\left(1\right)\eqsp.
\end{equation}
\end{theorem}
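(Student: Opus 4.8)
The plan is to transfer the problem from the parameter iterates $\{\param_n\}$ to the statistic iterates $\{\widetilde S_n\}$ and to analyze the latter as a noisy, locally contracting fixed-point recursion. Write $s_\star \eqdef \mapS(\param_\star)$; since $\param_\star\in\Staset$ we have $\limEMmapt(\param_\star)=\bar\param(\mapS(\param_\star))=\param_\star$, hence $(\mapS\circ\bar\param)(s_\star)=\mapS(\param_\star)=s_\star$, so $s_\star$ is a fixed point of $H\eqdef\mapS\circ\bar\param$. Because $\param_n=\bar\param(\widetilde S_{n-1})$ and $\bar\param$ is $C^2$ near $s_\star$ by A\ref{BOEM:assum:stable:fixpoint}\eqref{BOEM:assum:stable:fixpoint:C2}, it suffices to control $\Delta_n\eqdef\widetilde S_n-s_\star$: on the event $E_\star\eqdef\{\lim_n\param_n=\param_\star\}$ one has $|\param_n-\param_\star|\le L\,|\Delta_{n-1}|$ for $n$ large, so $\sqrt{\tau_n}\,[\param_n-\param_\star]\1_{E_\star}$ inherits the bound obtained for $\sqrt{\tau_{n-1}}\,\Delta_{n-1}$ (recall $\tau_n/\tau_{n-1}\to1$ under A\ref{BOEM:assum:size-block}).

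Next I would set up the perturbed recursion. Writing $e_n\eqdef\widetilde S_n-\mapS(\param_n)$ and using $\mapS(\param_n)=H(\widetilde S_{n-1})$, we get $\Delta_n=H(\widetilde S_{n-1})-H(s_\star)+e_n$. Theorem~\ref{BOEM:th:LGN}\eqref{BOEM:th:LGN:lp}, valid since A\ref{BOEM:assum:SMCapprox}-$(\bar p)$ holds, supplies the crucial noise bound $\lpnorm{e_n}{p}\le C\tau_{n+1}^{-1/2}$ for $p\in(2,\bar p)$. A $C^2$ Taylor expansion of $H$ at $s_\star$ yields $\Delta_n=A\Delta_{n-1}+R_n+e_n$, where $A\eqdef\nabla_s(\mapS\circ\bar\param)_{s=s_\star}$ has spectral radius $<\gamma<1$ by A\ref{BOEM:assum:stable:fixpoint}\eqref{BOEM:assum:stable:fixpoint:norm}, and the remainder satisfies $|R_n|\le L\,|\Delta_{n-1}|^2$ on a neighborhood of $s_\star$. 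Fixing a norm for which $\|A^j\|\le C\gamma^j$, I would localize on $E_\star$: there is an a.s.\ finite random index $N$ after which $\widetilde S_{n-1}$ stays in that neighborhood, and unroll from $N$,
\[
\Delta_n = A^{\,n-N}\Delta_N + \sum_{k=N+1}^n A^{\,n-k}(R_k + e_k)\eqsp.
\]

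The three pieces are then estimated separately. The initialization term $A^{\,n-N}\Delta_N$ decays geometrically and, since $\tau_n$ grows only polynomially, $\sqrt{\tau_n}\,\gamma^{n-N}\to0$; this feeds the $O_{\mathrm{a.s}}(1)$ contribution. For the linear noise term, Minkowski's inequality together with the bound on $\lpnorm{e_k}{p}$ gives $\lpnorm{\sum_k A^{\,n-k}e_k}{p}\le C\sum_k\gamma^{n-k}\tau_{k+1}^{-1/2}$; balancing the geometric decay against the polynomial block growth (A\ref{BOEM:assum:size-block}) shows the sum is $O(\tau_n^{-1/2})$, so $\sqrt{\tau_n}\sum_k A^{\,n-k}e_k=O_{\rmL_p}(1)$, the first term in \eqref{BOEM:eq:rate:nonaverage}. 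For the quadratic term I would bootstrap: the previous two estimates already show $\sqrt{\tau_k}\,|\Delta_{k-1}|$ is $O_{\rmL_p}(1)$ up to the a.s.\ part, so $|\Delta_{k-1}|^2$ is $\tau_k^{-1}O_{\rmL_{p/2}}(1)O_{\mathrm{a.s}}(1)$ (squaring an $\rmL_p$ quantity lands in $\rmL_{p/2}$, the a.s.\ factor absorbing the random localization constant); summing $\sum_k\gamma^{n-k}\tau_k^{-1}$ against the geometric weights gives $O(\tau_n^{-1})$, and multiplying by $\sqrt{\tau_n}$ produces the $\frac{1}{\sqrt{\tau_n}}O_{\rmL_{p/2}}(1)O_{\mathrm{a.s}}(1)$ term.

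The main obstacle is the interplay between the geometric contraction and the non-adapted event $E_\star$: the estimate $\|A^j\|\le C\gamma^j$ is only valid in a neighborhood of $s_\star$ that the iterates enter at a random time $N$ depending on the whole trajectory, so $E_\star$ is not measurable with respect to any finite $\sigma$-field, and extracting genuine $\rmL_p$ (rather than merely a.s.) bounds forces a careful localization combined with the self-improving argument above rather than a single appeal to a stochastic-approximation lemma. A second, related difficulty is that Theorem~\ref{BOEM:th:LGN}\eqref{BOEM:th:LGN:lp} only controls $e_n$ in $\rmL_p$ norm, with no martingale-difference or independence structure across blocks; consequently every summation must pass through Minkowski's inequality, and the final rate hinges entirely on the quantitative balance between the geometric factor $\gamma^{n-k}$ and the polynomial block-length growth $\tau_k\sim ck^a$.
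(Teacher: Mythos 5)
Your proposal is correct and follows essentially the same route as the paper: your decomposition $\Delta_n = A\Delta_{n-1} + e_n + R_n$ with $A=\nabla_s(\mapS\circ\bar\param)_{s=s_\star}$ is exactly the paper's split of $\widetilde S_n - s_\star$ into the linear noise recursion $\mu_n = \jacG\mu_{n-1}+e_n$ and the quadratic remainder $\rho_n$ (Proposition~\ref{BOEM:prop:mu:rho:rate}), with the same key input $\lpnorm{e_n}{p}=O(\tau_{n+1}^{-1/2})$ from Theorem~\ref{BOEM:th:LGN}\eqref{BOEM:th:LGN:lp} and the same localization on $\{\lim_n\param_n=\param_\star\}$. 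The only cosmetic differences are that you derive the geometric-versus-polynomial convolution bound $\sum_k\gamma^{n-k}\tau_{k+1}^{-1/2}=O(\tau_n^{-1/2})$ directly where the paper invokes P\'olya's Result~178 and the scheme of \cite[Theorem~6]{fort:moulines:2003}, and that you transfer back to $\param_n-\param_\star$ via a local Lipschitz bound on $\bar\param$ rather than the paper's linearization $\Upsilon=\nabla\bar\param(s_\star)$ plus Taylor remainder.
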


In (\ref{BOEM:eq:rate:nonaverage}), the rate is a function of the number of updates (i.e. the number of iterations of the algorithm). Theorem~\ref{BOEM:th:rate:averaged} shows that the averaging procedure reduces the influence of the block-size schedule: the rate of convergence is proportional to $T_{n}^{1/2}$ i.e.  to the inverse of the square root of the
total number of observations up to iteration $n$.

\begin{theorem}
\label{BOEM:th:rate:averaged}
Let $\bar p>2$. Assume that A\ref{BOEM:assum:strong}, A\ref{BOEM:assum:moment:sup}-($\bar p$),
A\ref{BOEM:assum:obs}-\ref{BOEM:assum:size-block}, A\ref{BOEM:assum:SMCapprox}-($\bar p$) and A\ref{BOEM:assum:stable:fixpoint} hold. Then, for any $p\in(2,\bar p)$,
\begin{equation}
  \label{BOEM:eq:rate:averaged}
  \sqrt{T_n} \left[\widetilde\param_{n} - \param_{\star}\right]\1_{\lim_{n}\param_{n}=\param_{\star}} = O_{\rmL_{p}}(1)+ \frac{n}{\sqrt{T_n} } O_{\rmL_{p/2}}(1)O_{\mathrm{a.s}}\left(1\right)\eqsp.
\end{equation}
\end{theorem}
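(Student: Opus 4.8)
The plan is to reduce the averaged rate (\ref{BOEM:eq:rate:averaged}) to the non-averaged bound of Theorem~\ref{BOEM:th:rate:nonaverage} by analyzing the averaged statistic $\Sigma_n$ directly, rather than the averaged parameter. Since $\widetilde\param_n = \bar\param(\Sigma_n)$ and $\param_\star = \bar\param(\mapS(\param_\star))$, on the event $\{\lim_n\param_n=\param_\star\}$ a Taylor expansion of the twice continuously differentiable map $\bar\param$ (guaranteed by A\ref{BOEM:assum:stable:fixpoint}\eqref{BOEM:assum:stable:fixpoint:C2}) gives, writing $s_\star \eqdef \mapS(\param_\star)$,
\begin{equation*}
\widetilde\param_n - \param_\star = \nabla_s\bar\param(s_\star)\,\bigl[\Sigma_n - s_\star\bigr] + O(|\Sigma_n - s_\star|^2)\eqsp.
\end{equation*}
Thus it suffices to control $\sqrt{T_n}\,(\Sigma_n - s_\star)$ in $\rmL_p$. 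First I would write the centered average using the definition (\ref{BOEM:eq:Bonem:averaged}),
\begin{equation*}
\Sigma_n - s_\star = \frac{1}{T_n}\sum_{j=1}^n \tau_j\,\bigl(S_{j-1} - s_\star\bigr)\eqsp,
\end{equation*}
and then insert $\mapS(\param_{j-1})$ as an intermediate point, splitting each term as $(S_{j-1} - \mapS(\param_{j-1})) + (\mapS(\param_{j-1}) - s_\star)$.

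The first piece is the fluctuation term. Here I would appeal to Theorem~\ref{BOEM:th:LGN}\eqref{BOEM:th:LGN:lp}, which bounds $\lpnorm{\widetilde S_{j-1} - \mapS(\param_{j-1})}{p}$ by $C\tau_j^{-1/2}$, so that $\tau_j(S_{j-1} - \mapS(\param_{j-1}))$ has $\rmL_p$ norm of order $\tau_j^{1/2}$. The sum $\sum_{j=1}^n \tau_j(S_{j-1} - \mapS(\param_{j-1}))$ is a near-martingale (the increments being centered conditionally on the past up to a forgetting error controlled by the geometric mixing A\ref{BOEM:assum:obs}\eqref{BOEM:assum:obs:mix} and the exponential forgetting bound (\ref{BOEM:th:LGN:smooth})). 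A Burkholder-type inequality for such dependent sequences, as used to prove Theorem~\ref{BOEM:th:rate:nonaverage}, then yields $\lpnorm{\sum_{j=1}^n \tau_j(S_{j-1}-\mapS(\param_{j-1}))}{p} = O((\sum_j \tau_j)^{1/2}) = O(T_n^{1/2})$. After dividing by $T_n$ this gives the leading $O_{\rmL_p}(1)/\sqrt{T_n}$ contribution, matching the $O_{\rmL_p}(1)$ term in (\ref{BOEM:eq:rate:averaged}) once multiplied by $\sqrt{T_n}$.

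The second piece is the bias term $\sum_{j=1}^n \tau_j(\mapS(\param_{j-1}) - s_\star)$. The key observation is that the non-averaged result controls each $\param_{j-1} - \param_\star$, hence each $\mapS(\param_{j-1}) - s_\star$, at rate $\tau_j^{-1/2}$ on the convergence event; so $\tau_j(\mapS(\param_{j-1}) - s_\star)$ is of order $\tau_j^{1/2}$ in $\rmL_{p/2}$, and summing gives a term of order $\sum_j \tau_j^{1/2}$. Under A\ref{BOEM:assum:size-block} ($\tau_n \sim cn^a$) one has $\sum_{j=1}^n \tau_j^{1/2} \sim c' n^{(a+2)/2}$ while $T_n \sim c'' n^{a+1}$, so this bias, divided by $T_n$ and multiplied by $\sqrt{T_n}$, produces a term of order $n/\sqrt{T_n}$, which is exactly the second summand in (\ref{BOEM:eq:rate:averaged}); the $O_{\mathrm{a.s}}(1)$ factor absorbs the indicator/convergence-event constants. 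The main obstacle I anticipate is making the martingale decomposition of the fluctuation term rigorous: the summands $S_{j-1}-\mapS(\param_{j-1})$ are computed on disjoint blocks with \emph{data-dependent} parameters $\param_{j-1}$ and initializations $\chi_{j-1}$, so they are neither independent nor exactly centered. Controlling the conditional-centering error requires carefully combining the uniform (over $\param,\chi$) exponential forgetting estimate (\ref{BOEM:th:LGN:smooth}) with the $\beta$-mixing decay to show the residual cross-block correlations are summable, which is precisely the delicate step that the non-averaged theorem's proof already handles and that I would reuse here.
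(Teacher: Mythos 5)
Your overall reduction is sound and partially matches the paper: the Taylor expansion of $\bar\param$ at $s_{\star}$, and the treatment of the fluctuation sum $\sum_{k}\tau_{k+1}e_{k}$ with $e_{k}=\widetilde S_{k}-\mapS(\param_{k})$ via Berbee coupling, $\beta$-mixing and a Rosenthal-type martingale inequality, is exactly the content of Lemma~\ref{BOEM:lem:mu:rho:average:rate}, which yields the $O_{\rmL_{p}}(1)$ summand. The gap is in your bias term. Bounding $\lpnorm{\mapS(\param_{j-1})-s_{\star}}{p/2}$ by $C\tau_{j}^{-1/2}$ and summing norms by the triangle inequality gives, under A\ref{BOEM:assum:size-block},
\begin{equation*}
\frac{1}{\sqrt{T_n}}\sum_{j=1}^{n}\tau_{j}\cdot\tau_{j}^{-1/2}
\asymp \frac{n^{(a+2)/2}}{n^{(a+1)/2}} = n^{1/2}\longrightarrow +\infty\eqsp,
\end{equation*}
whereas the second summand of \eqref{BOEM:eq:rate:averaged} is $n/\sqrt{T_n}\asymp n^{(1-a)/2}\to 0$ for $a>1$. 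Your claim that this computation ``produces a term of order $n/\sqrt{T_n}$'' is an arithmetic error: no argument that sums the $\rmL_{p/2}$ norms of $\tau_{j}(\mapS(\param_{j-1})-s_{\star})$ can do better than $n^{1/2}$, so your route does not prove the theorem, and in particular cannot exhibit the variance reduction that averaging is supposed to deliver.

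The missing idea is that the \emph{linear} part of the bias must cancel in the average rather than be bounded in norm. Since $\mapS(\param_{k})-s_{\star}=\limEMmap(\widetilde S_{k-1})-\limEMmap(s_{\star})=\jacG(\widetilde S_{k-1}-s_{\star})+O(|\widetilde S_{k-1}-s_{\star}|^{2})$ with $\jacG=\nabla\limEMmap(s_{\star})$, the deviation at block $k$ feeds back linearly into the deviation at block $k+1$; the paper captures this by splitting $\widetilde S_{k}-s_{\star}=\mu_{k}+\rho_{k}$ with $\mu_{k}=\jacG\mu_{k-1}+e_{k}$ as in \eqref{BOEM:eq:def:mu}. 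In the average, the recursion telescopes: writing $A\eqdef I-\jacG$ (invertible precisely by A\ref{BOEM:assum:stable:fixpoint}\eqref{BOEM:assum:stable:fixpoint:norm}), Abel summation gives
\begin{equation*}
A\sqrt{T_{n}}\,\bar\mu_{n} = -\frac{\tau_{n+1}\mu_{n}}{\sqrt{T_{n}}}
+\frac{1}{\sqrt{T_{n}}}\sum_{k=1}^{n}\tau_{k+1}e_{k}
+\frac{1}{\sqrt{T_{n}}}\sum_{k=1}^{n}\tau_{k}\left(\frac{\tau_{k+1}}{\tau_{k}}-1\right)\jacG\mu_{k-1}\eqsp,
\end{equation*}
so that $\sqrt{T_{n}}\,\bar\mu_{n}=O_{\rmL_{p}}(1)$ by Lemma~\ref{BOEM:lem:mu:rho:average:rate}, Proposition~\ref{BOEM:prop:mu:rho:rate} and A\ref{BOEM:assum:size-block} (Proposition~\ref{BOEM:prop:mu:rho:average:rate}): the averaged linear part inherits the full martingale rate $\sqrt{T_n}$ instead of the naive $\sum_{j}\tau_{j}^{1/2}$. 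Only the genuinely quadratic remainder $\rho_{k}$ is summed termwise, and by Proposition~\ref{BOEM:prop:mu:rho:rate} it is of order $\tau_{k}^{-1}$ (not $\tau_{k}^{-1/2}$) on the event $\{\lim_{n}\param_{n}=\param_{\star}\}$, so $\sqrt{T_{n}}\,\bar\rho_{n}\1_{\lim_{n}\param_{n}=\param_{\star}}=\frac{n}{\sqrt{T_{n}}}O_{\rmL_{p/2}}(1)O_{\mathrm{a.s}}(1)$, which is where the second summand of \eqref{BOEM:eq:rate:averaged} actually comes from. Without the $\mu$/$\rho$ decomposition and the $(I-\jacG)$ telescoping, your proof cannot close.
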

Theorems~\ref{BOEM:th:rate:nonaverage} and~\ref{BOEM:th:rate:averaged} give the rates of convergence as a function of the number of updates but they can also be studied as a function of the number of observations. Let $\{\param^{\text{int}}_k\}_{k\ge 0}$ (resp. $\{\widetilde\param^{\text{int}}_k\}_{k\ge 0}$) be such that, for any $k\ge 0$, $\param^{\text{int}}_k$ (resp. $\widetilde\param^{\text{int}}_k$) is the value $\param_n$ (resp. $\widetilde\param_n$), where $n$ is the only integer such that $k\in[T_n+1,T_{n+1}]$. The sequences $\{\param^{\text{int}}_k\}_{k\ge 0}$ and $\{\widetilde\param^{\text{int}}_k\}_{k\ge 0}$ are piecewise constant and their values are updated at times $\{T_n\}_{n\ge 1}$.

By Theorem~\ref{BOEM:th:rate:nonaverage}, the rate of convergence of $\{\param^{\text{int}}_k\}_{k\ge 0}$ is given (up to a multiplicative constant) by $k^{-a/(2(a+1))}$, where $a$ is given by A\ref{BOEM:assum:size-block}. This rates is slower than $k^{-1/2}$ and depends on the block-size sequence (through $a$). On the contrary, by Theorem~\ref{BOEM:th:rate:averaged}, the rate of convergence of $\{\widetilde\param^{\text{int}}_k\}_{k\ge 0}$ is given (up to a multiplicative constant) by $k^{-1/2}$, for any value of $a$. Therefore, this rate of convergence does not depend on the block-size sequence.

\section{Proofs}
\label{BOEM:sec:proofs}
Define, for any initial density $\chi$ on $(\Xset,\sigmaX)$, any $\param \in\paramset$, any $\bfy\in\Yset^{\Zset}$ and  any $r < s \leq t$,
\begin{multline}
\label{BOEM:eq:define-Phi}
\smoothfunc{\chi,r}{\param,s,t}(h,\bfy)  \\
\eqdef \frac{\int \chi(x_r) \{ \prod_{i=r}^{t-1} m_\param(x_i,
  x_{i+1})g_{\param}(x_{i+1},y_{i+1}) \} \, h(x_{s-1},x_{s},y_{s}) \,
  \lambda(\rmd x_{r:t})}{\int \chi(x_r) \{\prod_{i=r}^{t-1} m_\param(x_i,
  x_{i+1})g_{\param}(x_{i+1},y_{i+1}) \}\, \lambda(\rmd x_{r:t})}\eqsp,
\end{multline}
for any bounded function $h$ on $\Xset^{2}\times\Yset$. Then, the intermediate quantity of the Block online EM algorithm is (see \eqref{BOEM:eq:rewrite:barS}),
\begin{equation}
  \label{BOEM:eq:rewrite:barS:phi}
  \bar S_{\tau}^{\chi,T}(\param, \bfY) \eqdef
\frac{1}{\tau} \sum_{t=T+1}^{T+\tau}\smoothfunc{\chi,T}{\param,t,T+\tau}(S,\bfY)\eqsp.
\end{equation}
\begin{lemma}
\label{BOEM:lem:continuite:theta:Phi}
Assume A\ref{BOEM:assum:exp}-\ref{BOEM:assum:strong}.  Let
$\bfy\in\Yset^{\Zset}$ s.t. $\sup_{x,x'}|S(x,x',y_i)| < +\infty$ for
any $i \in \Zset$.  Then for any $r >0$ and any distribution $\chi$
on $(\Xset,\sigmaX)$, $\param \mapsto
\smoothfunc{\chi,-r}{\param,0,r}(S,\bfy)$ is continuous on $\Theta$.
\end{lemma}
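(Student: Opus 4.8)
The quantity $\smoothfunc{\chi,-r}{\param,0,r}(S,\bfy)$ is a ratio of two integrals, each of the form $\int \chi(x_{-r}) \prod_{i=-r}^{r-1} m_\param(x_i,x_{i+1}) g_\param(x_{i+1},y_{i+1}) (\cdots) \lambda(\rmd x_{-r:r})$, with the numerator carrying an extra factor $S(x_{-1},x_0,y_0)$. Here $r$ is fixed (a finite number of time steps), so each integral is over a finite-dimensional product space $\Xset^{2r+1}$. The plan is to show the map $\param \mapsto \smoothfunc{\chi,-r}{\param,0,r}(S,\bfy)$ is continuous by establishing continuity of numerator and denominator separately and then invoking continuity of the quotient, after checking the denominator is bounded away from $0$.

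**The key steps I would carry out.**

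First I would fix a convergent sequence $\param_k \to \param$ in $\paramset$ and aim to show pointwise convergence of the integrands, then pass to the limit under the integral sign via dominated convergence. By A\ref{BOEM:assum:exp}\eqref{BOEM:assum:exp:decomp}, $\log m_\param(x,x') + \log g_\param(x',y) = \phi(\param) + \pscal{S(x,x',y)}{\psi(\param)}$ with $\phi$, $\psi$ continuous, so for each fixed $(x,x',y)$ the product $m_{\param_k}(x,x') g_{\param_k}(x',y) \to m_\param(x,x') g_\param(x',y)$. Hence the finite product $\prod_{i=-r}^{r-1} m_{\param_k}(x_i,x_{i+1}) g_{\param_k}(x_{i+1},y_{i+1})$ converges pointwise, and so does the integrand (the extra factor $S(x_{-1},x_0,y_0)$ is $\param$-free). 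For the domination, A\ref{BOEM:assum:strong} gives $m_\param(x,x') \le \sigma_+$ uniformly in $\param$; combined with the exponential-family form this yields a uniform bound on $g_\param(x',y_{i+1})$ in terms of the fixed observations $y_{i+1}$, and the hypothesis $\sup_{x,x'}|S(x,x',y_i)| < +\infty$ for each $i$ supplies an integrable (indeed bounded, since $\lambda$-integrals over the relevant factors are finite under A\ref{BOEM:assum:strong}) dominating function. Dominated convergence then gives continuity of both numerator and denominator in $\param$.

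**Handling the denominator and concluding.**

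The remaining point is that the denominator is bounded below uniformly, so that the ratio is continuous. Using A\ref{BOEM:assum:strong}, $m_\param(x,x') \ge \sigma_- > 0$ for all $\param$ and all $(x,x')$, which forces each transition factor to be strictly positive; integrating out and using that $g_\param$ integrates to a strictly positive quantity (controlled below by the fixed $y_i$) gives a strictly positive lower bound on the denominator that is uniform in $\param$ over the compact $\paramset$. Since numerator and denominator are both continuous in $\param$ and the denominator never vanishes, the quotient $\param \mapsto \smoothfunc{\chi,-r}{\param,0,r}(S,\bfy)$ is continuous.

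**The main obstacle.**

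The genuinely delicate part is the domination step for the numerator, because the factor $S(x_{-1},x_0,y_0)$ is not \emph{a priori} bounded as a function of $(x_{-1},x_0)$; it is only the observation-coordinate $\sup_{x,x'}|S(x,x',y_0)|$ that the hypothesis controls. I would handle this by writing the integrand as $|S(x_{-1},x_0,y_0)| \le \sup_{x,x'}|S(x,x',y_0)|$ \emph{before} integrating, absorbing this finite constant into the dominating function, so that what must be dominated is just the product of transition densities; A\ref{BOEM:assum:strong} then bounds these uniformly in $\param$. One must also be slightly careful that the dominating function is $\lambda^{\otimes(2r+1)}$-integrable, which follows because the upper bound $\sigma_+$ on $m_\param$ and the $\mu$-integrability built into $g_\param$ make each one-step factor integrate to something finite; I expect this to be routine once the uniform bounds of A\ref{BOEM:assum:strong} are in place.
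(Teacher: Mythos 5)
Your proposal is correct and takes essentially the same route as the paper's proof: continuity of numerator and denominator separately via dominated convergence on the fixed finite-dimensional integral, replacing $|S(x_{-1},x_{0},y_{0})|$ by its supremum, and obtaining a $\param$-uniform dominating function from the exponential-family representation together with compactness of $\paramset$, finiteness of $\lambda$, and $\sup_{x,x'}|S(x,x',y_i)|<+\infty$ (the paper bounds the whole product at once as $\exp\left(2r\phi(\param)+\pscal{\psi(\param)}{\sum_{i}S(x_i,x_{i+1},y_{i+1})}\right)$ rather than splitting $m_\param$ and $g_\param$, and leaves the positivity of the denominator implicit where you make it explicit). One minor slip worth noting: to bound $g_\param$ from above via the exponential form you divide by $m_\param\ge\sigma_-$ (and use $m_\param\le\sigma_+$ for the lower bound on $g_\param$), the opposite of what you wrote, but this is immaterial since bounding the full product directly suffices.
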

\begin{proof}
  Set $K_\param(x,x',y) \eqdef m_\param(x,x') g_\param(x',y)$.  Let $r>0$ and
  $\chi$ be a distribution on $(\Xset,\sigmaX)$. By
  definition of $\smoothfunc{\chi,-r}{\param,0,r}(S,\bfy)$
  (see~\eqref{BOEM:eq:define-Phi}) we have to prove that
\[
\param\mapsto \int \chi(\rmd x_{-r}) \left( \prod_{i=-r}^{r-1}   K_\param(x_i,
x_{i+1},y_{i+1}) \right) h(x_{-1},x_0,y_0) \ \rmd \lambda(x_{-r+1:r})
\]
is continuous for $h(x,x',y) =1$ and $h(x,x',y) = S(x,x',y)$.  By
A\ref{BOEM:assum:exp}\eqref{BOEM:assum:exp:decomp}, the function $\param \mapsto
\prod_{i=-r}^{r-1} K_\param(x_i, x_{i+1},y_{i+1})
\,h(x_{-1},x_{0},y_{0})$ is continuous. In addition, under A\ref{BOEM:assum:exp},
for any $\param\in\paramset$,
\begin{multline*}
  \left|\prod_{i=-r}^{r-1} K_\param(x_i,
    x_{i+1},y_{i+1}) \,h(x_{-1},x_{0},y_{0})\right| \\
  = |h(x_{-1},x_{0},y_{0})|\exp\left(2r \phi(\param)+
    \pscal{\psi(\param)}{\sum_{i=-r}^{r-1} S(x_i, x_{i+1},y_{i+1})}\right)\eqsp.
\end{multline*}
Since $\paramset$ is compact,  by A\ref{BOEM:assum:exp}, there exist
constants $C_{1}$ and $C_{2}$ s.t. the supremum in $\param\in\paramset$ of this expression is bounded above by
\[
C_{1}\sup_{x,x'}|h(x,x^{\prime},y_{0})|
  \exp\left(C_{2}\sum_{i=-r}^{r-1} \sup_{x,x'}|S(x,x',y_{i+1})|\right)\eqsp.
  \]
  Since $\chi$ is a distribution and $\lambda$ is a finite measure, the continuity follows from the dominated convergence theorem.
\end{proof}

Let us introduce the following shorthand $S_{s}(x,x') \eqdef S(x,x',Y_s)$. Define the shift
operator $\shift$ onto $\Yset^\Zset$ by $(\shift  \bfy)_k = \bfy_{k+1}$
for any $k \in \Zset$; and by induction, define the $s$-iterated shift operator $\shift^{s+1}\bfy   = \shift( \shift^{s} \bfy)$, with the convention that $\shift^0$ is the identity operator. For a function $h$, define $\osc(h) \eqdef \sup_{z,z'} | h(z)-h(z')|$.

\subsection{Proof of Theorem~\ref{BOEM:th:LGN}}
\label{BOEM:proof:th:LGN}
The proof of Theorem~\ref{BOEM:th:LGN} relies on auxiliary results about the
forgetting properties of HMM. Most of them are really close to published
results and their proof is provided in the supplementary material
\cite[Section~$3$]{lecorff:fort:2011-supp}. The main
novelty is the forgetting property of the bivariate smoothing distribution.

\textit{Proof of \ref{BOEM:th:LGN:phi})}
Note that under A\ref{BOEM:assum:moment:sup}-(1), $\CE[]{}{}{\osc(S_{0})} <
+\infty$.  Under A\ref{BOEM:assum:strong}, Proposition~\ref{BOEM:prop:expforget}\eqref{BOEM:item:forget-limit} implies that for any
$\param \in \paramset$, there exists a r.v. $\limE{\param}{\bfY}$ s.t. for any $r < s \leq T$,
\begin{equation}
  \label{BOEM:eq:th:LGN:tool1}
  \underset{\param\in\paramset}{\sup}\;\left|\smoothfunc{\chi,r}{\param,s,T}\left(S,\bfY\right)-\limE{\param}{\shift^s\bfY}\right|\leq
\left(\rho^{T-s}+\rho^{s-r-1}\right)\osc(S_{s})\eqsp.
\end{equation}
This concludes the proof of \eqref{BOEM:th:LGN:smooth}.

\textit{Proof of \ref{BOEM:th:LGN:continuous})} We introduce the following decomposition: for all
$T>0$,
 \begin{equation*}
   \bar S_{\tau}^{\chi,T}(\param, \bfY) =\frac{1}{\tau}\sum_{t=1}^\tau\left[\limE{\param}{\shift^{t+T}\bfY} + \left\{\smoothfunc{\chi,0}{\param,t,\tau}\left(S,\shift^{T}\bfY\right)-\limE{\param}{\shift^{t+T}\bfY}\right\}\right]\eqsp,
\end{equation*}
upon noting that by \eqref{BOEM:eq:rewrite:barS:phi}, $ \bar S_{\tau}^{\chi,T}(\param,
\bfY) = \tau^{-1}
\sum_{t=1}^\tau\smoothfunc{\chi,0}{\param,t,\tau}\left(S,\shift^{T}\bfY\right)$.
By (\ref{BOEM:eq:define-Phi}), (\ref{BOEM:eq:th:LGN:tool1}) and
A\ref{BOEM:assum:moment:sup}-(1)
$\CE[]{}{}{\left|\limE{\param}{\bfY}\right|}<+\infty$. Under A\ref{BOEM:assum:obs}, the ergodic
theorem (see e.g.~\cite[Theorem 24.1, p.314]{billingsley:1987}) states that, for any fixed $T$,
\[
\underset{\tau\to\infty}{\lim}\;\frac{1}{\tau}\sum_{t=1}^\tau\limE{\param}{\shift^{t+T}\bfY} = \CE[]{}{}{\limE{\param}{\bfY}}\eqsp,\quad\ps{\PPim}
\]
By (\ref{BOEM:eq:th:LGN:tool1}),
\begin{equation}
\label{BOEM:eq:th:LGN:tool2}
\frac{1}{\tau}\sum_{t=1}^\tau\left|\smoothfunc{\chi,0}{\param,t,\tau}\left(S,\shift^{T}\bfY\right)-\limE{\param}{\shift^{t+T}\bfY}\right|
\leq
\frac{1}{\tau}\sum_{t=1}^\tau\left(\rho^{\tau-t}+\rho^{t-1}\right)\osc(S_{t+T})
\eqsp.
\end{equation}
Set $Z_{t} \eqdef \frac{1}{t}\sum_{s=1}^t \osc(S_{s+T})$ and $Z_{0} \eqdef 0$.
Then, by an Abel transform,
\begin{equation}
\label{BOEM:eq:th:LGN:abel}
\frac{1}{\tau}\sum_{t=1}^\tau\rho^{t-1}\osc(S_{t+T}) = \rho^{\tau-1}Z_{\tau}+ \frac{1-\rho}{\tau}\sum_{t=1}^{\tau-1}t\rho^{t-1}Z_{t}\eqsp.
\end{equation}
By A\ref{BOEM:assum:moment:sup}-(1) and A\ref{BOEM:assum:obs}, the ergodic theorem implies that $\lim_{\tau \to
  \infty} Z_{\tau}= \CE[]{}{}{\osc(S_{0})}$, $\ps{\PPim}$ Therefore,
$\limsup_{\tau}Z_{\tau} <\infty$, $\ps{\PPim}$ Since $\sum_{t\geq 1}t\rho^{t-1}
<\infty$, this implies that
$\tau^{-1}\sum_{t=1}^\tau\rho^{t-1}\osc(S_{t+T})\underset{\tau \to
  +\infty}{\longrightarrow} 0$, $\ps{\PPim}$
Similarly,
\[
\frac{1}{\tau}\sum_{t=1}^\tau\rho^{\tau-t}\osc(S_{t+T}) = Z_{\tau} -
(1-\rho)\sum_{t=1}^{\tau-1}\rho^{\tau-t-1}Z_{t} +
\frac{1-\rho}{\tau}\sum_{t=1}^{\tau-1}t\rho^{t-1}Z_{\tau-t}\eqsp.
\]
Using the same arguments as for the second term in \eqref{BOEM:eq:th:LGN:abel}, we can state that $\lim_{\tau \to \infty} \tau^{-1}\sum_{t=1}^{\tau-1}t\rho^{t-1}Z_{\tau-t} =0$, $\ps{\PPim}$
Furthermore,
\begin{multline*}
\left|\sum_{t=1}^{\tau-1}\frac{\rho^{\tau-t-1}}{1-\rho}Z_{t} - \CE[]{}{}{\osc(S_{0})}\right| \leq \sum_{t=1}^{\tau-1}\frac{\rho^{\tau-t-1}}{1-\rho}\left|Z_{t}-\CE[]{}{}{\osc(S_{0})}\right|\\
+ \;\CE[]{}{}{\osc(S_{0})}\rho^{\tau-1}\eqsp.
\end{multline*}
Since, $\ps{\PPim}$, $Z_{\tau}\underset{\tau \to +\infty}{\longrightarrow}
\CE[]{}{}{\osc(S_{0})}$, the RHS converges $\ps{\PPim}$ to
$0$ and
\[
\underset{\tau\to +\infty}{\lim}\left|Z_{\tau}-(1-\rho)\sum_{t=1}^{\tau-1}\rho^{\tau-t-1}Z_{t}\right| = 0\eqsp,\quad \ps{\PPim}
\]
Hence, the RHS in \eqref{BOEM:eq:th:LGN:tool2} converges $\ps{\PPim}$ to $0$ and
this concludes the proof of (\ref{BOEM:th:LGN:ergodic}).
We now prove that the function
$\param\mapsto\CE[]{}{}{\limE{\param}{\bfY}} $ is
continuous by application of the dominated convergence theorem.  By Proposition~\ref{BOEM:prop:expforget}\eqref{BOEM:item:forget-limit}, for any $\bfy$ s.t.
$\osc(S_{0}) < \infty$,
\[
\lim_{r \to +\infty} \sup_{\param \in \paramset}
\left|\smoothfunc{\chi,-r}{\param,0,r}(S,\bfy) -
  \limE{\param}{\bfy} \right|=0\eqsp.
\]
Then, by Lemma~\ref{BOEM:lem:continuite:theta:Phi}, $\param \mapsto
\limE{\param}{\bfy}$ is continuous for any $\bfy$ such
that $\osc(S_{0}) < +\infty$.  In addition, $\sup_{\param \in
  \paramset} \left|\limE{\param}{\bfY}\right| \leq
\sup_{x,x'}|S(x,x',Y_0)|$.  We then conclude by A\ref{BOEM:assum:moment:sup}-(1).

\textit{Proof of \ref{BOEM:th:LGN:lp})} Let $m_{n}, v_{n}$ be positive integers s.t. $1\leq m_{n}\leq \tau_{n+1}$ and
  $\tau_{n+1} = 2v_{n}m_{n}+r_{n}$, where $0\leq r_{n}< 2m_{n}$.  Set $\Delta p
  \eqdef p^{-1} - \bar p^{-1}$.  By the Minkowski inequality combined with
  Lemmas~\ref{BOEM:lem:lpcontrol:dec:rho}, \ref{BOEM:lem:lpcontrol:dec} applied with
  $q_{n}\eqdef 2v_{n}m_{n}$, there exists a constant $C$ s.t.
\[
\lpnorm{S_{n}-\mapS(\param_n)}{p}\leq C\left[ \rho^{m_{n}} +
  \frac{m_{n}}{\tau_{n+1}} + \mix^{m_{n}\Delta
    p}+\frac{1}{\sqrt{\tau_{n+1}}}\right]\eqsp.
\]
The proof is concluded by choosing $m_{n} = \lfloor - \log \tau_{n+1}/\left(\log \rho \vee \Delta p\log  \mix\right) \rfloor$ and by A\ref{BOEM:assum:SMCapprox}-($\bar p$) (since $b$ in A\ref{BOEM:assum:SMCapprox}-($\bar p$) is such that $b\ge 1/2$).

\subsection{Proof of Proposition~\ref{BOEM:prop:lyap}}
\label{BOEM:proof:prop:lyap}
$\,$\\
\textit{(Continuity of $\limEMmapt$ and $\lyap$)} By
A\ref{BOEM:assum:exp}\eqref{BOEM:assum:exp:max} and Theorem~\ref{BOEM:th:LGN}, the function
$\limEMmapt$ is continuous.  Under A\ref{BOEM:assum:exp}-\ref{BOEM:assum:strong} and
A\ref{BOEM:assum:obs}, there exists a continuous function $ \ell$ on
$\paramset$ s.t. $\lim_T T^{-1}\loglik{\chi}{\param,T}(\bfY) =
\ell(\param)$ $\ps{\PPim}$ for any distribution $\chi$ on
$(\Xset,\sigmaX)$ and any $\param\in\paramset$, (see \cite[Lemma~$2$ and Propositions~$1$ and~$2$]{douc:moulines:ryden:2004}, see also
\cite[Theorem~$3.8$]{lecorff:fort:2011-supp}). Therefore, $\lyap$
is continuous.

\medskip

\textit{Proof of Proposition~\ref{BOEM:prop:lyap}~\eqref{BOEM:prop:lyap:claim1}}
Under Assumption A\ref{BOEM:assum:exp}\eqref{BOEM:assum:exp:decomp}
\[
\frac{1}{T}\log p_{\param}(x_{0:T},Y_{1:T})=\phi(\param) + \pscal{\left\{\frac{1}{T}\sum_{t=1}^{T}S(x_{t-1},x_{t},Y_{t})\right\}}{\psi(\param)}\eqsp,
\]
where $ p_{\param}(x_{0:T},Y_{1:T})$ is defined by \eqref{BOEM:eq:completelik}.
Upon noting that
\begin{multline*}
\int S(x_{t-1},x_{t},Y_{t})\frac{p_{\param}(x_{0:T},Y_{1:T})}{\int
  p_{\param}(z_{0:T},Y_{1:T})\lambda(\rmd z_{1:T})\chi(\rmd z_0)
}\lambda(\rmd x_{1:T}) \chi(\rmd x_0) \\
= \smoothfunc{\chi,0}{\param,t,T}(S,\bfY)\eqsp,
\end{multline*}
the Jensen inequality gives, $\ps{\PPim}$,
\begin{multline}
\label{BOEM:eq:loglik-diff}
\frac{1}{T}\loglik{\chi}{\limEMmapt(\param),T}(\bfY)- \frac{1}{T}\loglik{\chi}{\param,T}(\bfY) \geq \phi(\limEMmapt(\param)) + \pscal{\frac{1}{T}\sum_{t=1}^{T}\smoothfunc{\chi,0}{\param,t,T}(S,\bfY)}{\psi(\limEMmapt(\param))}\\
-\phi(\param) -
\pscal{\frac{1}{T}\sum_{t=1}^{T}\smoothfunc{\chi,0}{\param,t,T}(S,\bfY)}{\psi(\param)}\eqsp.
\end{multline}
Under A\ref{BOEM:assum:exp}-\ref{BOEM:assum:obs}, it holds by Theorem~\ref{BOEM:th:LGN} and
\cite[Lemma $2$ and Proposition $1$]{douc:moulines:ryden:2004} (see also
\cite[Theorem~$3.8$]{lecorff:fort:2011-supp}) that
for all $\param\in\paramset$, $\ps{\PPim}$,
\begin{equation*}
  \frac{1}{T}\sum_{t=1}^{T}\smoothfunc{\chi,0}{\param,t,T}(S,\bfY)\limT{T}\mapS(\param)\eqsp,  \qquad
  \frac{1}{T}\loglik{\chi}{\param,T}(\bfY)\limT{T}
 \ln  \lyap(\param)\label{BOEM:eq:limloglik1}\eqsp.
\end{equation*}
Therefore, when $T\to +\infty$, \eqref{BOEM:eq:loglik-diff} implies
\begin{equation}
  \label{BOEM:eq:lyap-diff}
  \ln \left(\lyap(\limEMmapt(\param))/ \lyap(\param)\right)\geq \phi(\limEMmapt(\param)) + \pscal{\mapS(\param)}{\psi(\limEMmapt(\param))}-\phi(\param) - \pscal{\mapS(\param)}{\psi(\param)}\eqsp.
\end{equation}
By definition of $\bar\param$ and $\limEMmapt$ (see
A\ref{BOEM:assum:exp}\eqref{BOEM:assum:exp:max} and (\ref{BOEM:eq:EM:algorithm})), the RHS is
non negative. This concludes the proof of
Proposition~\ref{BOEM:prop:lyap}\eqref{BOEM:prop:lyap:claim1}.

\medskip

\textit{Proof of Proposition~\ref{BOEM:prop:lyap}~\eqref{BOEM:prop:lyap:claim2}} We prove
that $\lyap\circ\limEMmapt(\param) - \lyap(\param) = 0$ if and only if
$\param\in\Staset$. Since $\lyap\circ\limEMmapt - \lyap$ is continuous, this
implies that
$\underset{\param\in\K}{\inf}\lyap\circ\limEMmapt(\param)-\lyap(\param)>0$ for
all compact set $\K\subset \paramset\setminus \Staset$.  Let
$\param\in\paramset$ be s.t.  $\lyap\circ\limEMmapt(\param)-\lyap(\param)= 0$.
Then, the RHS in \eqref{BOEM:eq:lyap-diff} is equal to zero.  By definition of
$\bar\param$, $\limEMmapt(\param) = \param$ and thus $\param\in\Staset$.  The
converse implication is immediate from the definition of $\Staset$.

\medskip

\textit{Stationary points} If in addition A\ref{BOEM:assum:regularity-grad} holds, \cite[Theorem~$3.12$]{lecorff:fort:2011-supp} proves that, for any initial distribution $\chi$ on $(\Xset,\sigmaX)$,
\[
\frac{1}{T}\nabla_{\param}  \ell^{\chi}_{\param,T}(\bfY) \underset{T\rightarrow +\infty}{\longrightarrow}\nabla_{\param}\ell(\param)\quad\ps{\PPim}
\]
Therefore,
\[
\frac{1}{T}\nabla_{\param}\ell^{\chi}_{\param,T}(\bfY)=\nabla_{\param}\phi(\param) + \nabla_{\param}\psi^{'}(\param)\left\{\frac{1}{T}\sum_{t=1}^{T}\smoothfunc{\chi,0}{\param,t,T}(S,\bfY)\right\}\eqsp,
\]
where $A'$ is the transpose matrix of $A$. Theorem~\ref{BOEM:th:LGN} yield, $\ps{\PPim}$,
\[
\nabla_{\param}\ell(\param) = \nabla_{\param}\phi(\param) + \nabla_{\param}\psi^{'}(\param)\bar S(\param)\eqsp.
\]

The proof follows upon noting that by definition of $\bar\param$, the unique solution to the equation $\nabla_{\param}\phi(\tau) + \nabla_{\param}\psi^{'}(\tau)\bar S(\param)=0$ is $\tau = \limEMmapt(\param)$.

\subsection{Proof of Theorem~\ref{BOEM:th:bonem:conv}}
\label{BOEM:proof:th:bonem:conv}
The proof of Theorem~\ref{BOEM:th:bonem:conv} relies on Proposition~\ref{BOEM:prop:fort:moulines:2003} applied with $T(\param) \eqdef \limEMmapt(\param)$ and with $\param_{n+1} = \bar\param\left(\widetilde S_{\tau_{n+1}}^{\chi_{n},T_{n}}(\param_{n}, \bfY)\right)$. The key ingredient for this proof is the
control of the $\rmL_{p}$-mean error between the Monte Carlo Block Online EM algorithm and
the limiting EM. The proof of this bound is derived in
Theorem~\ref{BOEM:th:LGN} and relies on preliminary lemmas given in Appendix~\ref{BOEM:sec:tech:res}. The proof of \eqref{BOEM:eq:fort:moulines:2003} is now close to the proof of \cite[Proposition~$11$]{fort:moulines:2003} and is postponed to the supplement paper \cite[Section~$2.1$]{lecorff:fort:2011-supp}.

\subsection{Proof of Theorem~\ref{BOEM:th:rate:nonaverage}}
\label{BOEM:sec:proof:averaging}
Define $s_{\star}\eqdef \mapS(\param_{\star})$ and write
\begin{equation}
\label{BOEM:eq:rate:param}
\bar\param(\widetilde S_{n}) -  \bar\param(s_{\star})=\Upsilon(\widetilde S_{n}-s_{\star}) + \bar\param(\widetilde S_{n}) -  \bar\param(s_{\star}) - \Upsilon(\widetilde S_{n}-s_{\star})\eqsp,
\end{equation}
where $\Upsilon\eqdef\nabla\bar\param(s_{\star})$. We now derive the rate of convergence of the quantity $\widetilde S_{n}-s_{\star}$. Set $\limEMmap(s) \eqdef \mapS\circ\bar\param(s)$. Note that under A\ref{BOEM:assum:stable:fixpoint}\eqref{BOEM:assum:stable:fixpoint:norm}, $\rho(\Gamma)\leq\gamma$, where $\Gamma\eqdef\nabla\limEMmap(s_{\star})$. Since $\limEMmap(s_{\star}) =s_\star$, we write
\[
\widetilde S_{n}-s_{\star} =\jacG\left(\widetilde S_{n-1}-s_{\star}\right) + \widetilde S_{n}
-\limEMmap(\widetilde S_{n-1}) + \limEMmap(\widetilde S_{n-1})-
\limEMmap(s_{\star})-\jacG\left(\widetilde S_{n-1}-s_{\star}\right) \eqsp.
\]
Define $\{\mu_{n}\}_{n\geq 0}$ and $\{\rho_{n}\}_{n\geq 0}$ s.t. $\mu_{0} = 0$,
$\rho_{0} = \widetilde S_{0}-s_{\star}$ and
\begin{equation}
  \mu_{n} \eqdef \jacG\mu_{n-1} + e_n \eqsp, \qquad \rho_{n} \eqdef
  \widetilde S_{n}-s_{\star} - \mu_{n} \label{BOEM:eq:def:mu} \eqsp,  \qquad n\geq 1 \eqsp,
\end{equation}
where,
\begin{equation}
\label{BOEM:eq:define:epsilon}
e_{n}\eqdef \widetilde S_{n}-\mapS(\param_n)\eqsp, \qquad  n\geq 1 \eqsp.
\end{equation}

\begin{proposition}
\label{BOEM:prop:mu:rho:rate}
Assume A\ref{BOEM:assum:strong}, A\ref{BOEM:assum:moment:sup}-($\bar p$),
A\ref{BOEM:assum:obs}-\ref{BOEM:assum:size-block}, A\ref{BOEM:assum:SMCapprox}-($\bar p$) and A\ref{BOEM:assum:stable:fixpoint} for some $\bar
p>2$. Then for any $p \in (2,\bar p)$,
\[
\sqrt{\tau_n} \mu_n = O_{\rmL_{p}}(1)\quad\mbox{and}\quad \tau_n \rho_n \1_{\lim_{n}\param_{n}=\param_{\star}} = O_{\rmL_{p/2}}(1)O_{\mathrm{a.s}}(1)\eqsp.
\]
\end{proposition}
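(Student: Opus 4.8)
The plan is to analyse separately the linear ``noise'' term $\mu_n$ and the ``remainder'' term $\rho_n$ defined in \eqref{BOEM:eq:def:mu}, exploiting the contraction carried by $\jacG\eqdef\nabla\limEMmap(s_{\star})$ together with the polynomial growth of $\{\tau_n\}$ (A\ref{BOEM:assum:size-block}). By A\ref{BOEM:assum:stable:fixpoint}\eqref{BOEM:assum:stable:fixpoint:norm} the spectral radius of $\jacG$ is $<1$, so, fixing $\bar\gamma\in(\gamma,1)$, Gelfand's formula provides a constant with $\|\jacG^{\,j}\|\le C\bar\gamma^{\,j}$ for all $j\ge 0$. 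First I would control $\mu_n$. Since $\mu_0=0$, iterating $\mu_n=\jacG\mu_{n-1}+e_n$ gives $\mu_n=\sum_{k=1}^n\jacG^{\,n-k}e_k$, and Theorem~\ref{BOEM:th:LGN}\eqref{BOEM:th:LGN:lp} yields $\lpnorm{e_k}{p}=\lpnorm{\widetilde S_k-\mapS(\param_k)}{p}\le C\tau_{k+1}^{-1/2}$. Minkowski's inequality then gives $\lpnorm{\mu_n}{p}\le C\sum_{k=1}^n\bar\gamma^{\,n-k}\tau_{k+1}^{-1/2}$, and since $\tau_n=\lfloor cn^a\rfloor$ one has $\sqrt{\tau_n}\sum_{k=1}^n\bar\gamma^{\,n-k}\tau_{k+1}^{-1/2}\le C\sum_{k=1}^n\bar\gamma^{\,n-k}(n/(k+1))^{a/2}$, which is bounded uniformly in $n$: splitting at $k=\lfloor n/2\rfloor$, the polynomial factor is $O(1)$ near $k=n$ and summable against the geometric weight, while for $k\le n/2$ the factor $\bar\gamma^{\,n-k}$ dominates the at most polynomial growth $n^{a/2}$. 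Hence $\sqrt{\tau_n}\,\mu_n=O_{\rmL_{p}}(1)$.

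Next I would set up the recursion for $\rho_n$. Inserting $\param_n=\bar\param(\widetilde S_{n-1})$, so that $\mapS(\param_n)=\limEMmap(\widetilde S_{n-1})$ and $e_n=\widetilde S_n-\limEMmap(\widetilde S_{n-1})$, into the decomposition of $\widetilde S_n-s_{\star}$ preceding \eqref{BOEM:eq:def:mu} yields $\widetilde S_n-s_{\star}=\jacG(\widetilde S_{n-1}-s_{\star})+e_n+r_n$ with the first-order Taylor remainder $r_n\eqdef\limEMmap(\widetilde S_{n-1})-\limEMmap(s_{\star})-\jacG(\widetilde S_{n-1}-s_{\star})$. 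Subtracting the recursion for $\mu_n$ gives the clean linear recursion $\rho_n=\jacG\rho_{n-1}+r_n$. Because $\limEMmap=\mapS\circ\bar\param$ is twice continuously differentiable by A\ref{BOEM:assum:stable:fixpoint}\eqref{BOEM:assum:stable:fixpoint:C2}, there are $\delta>0$ and $C$ with $|r_n|\le C|\widetilde S_{n-1}-s_{\star}|^2$ whenever $|\widetilde S_{n-1}-s_{\star}|\le\delta$.

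The localization is where $\1_{\lim_n\param_n=\param_{\star}}$ and the a.s. factor enter; write $A\eqdef\{\lim_n\param_n=\param_{\star}\}$. On $A$ I would first prove $\widetilde S_n\to s_{\star}$ a.s.: indeed $\mapS(\param_n)=\limEMmap(\widetilde S_{n-1})\to s_{\star}$ by continuity of $\mapS$ (Theorem~\ref{BOEM:th:LGN}\eqref{BOEM:th:LGN:continuous}), while $e_n\to 0$ and $\mu_n\to 0$ a.s. by Borel--Cantelli (from $\lpnorm{e_n}{p}+\lpnorm{\mu_n}{p}=O(n^{-a/2})$ and $ap/2>1$), whence $\widetilde S_n=\mapS(\param_n)+e_n\to s_{\star}$ and $\rho_n=\widetilde S_n-s_{\star}-\mu_n\to 0$, a.s. on $A$. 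Consequently there is an a.s. finite random index $N$ such that, on $A$ and for all $n\ge N$, both $|\widetilde S_{n-1}-s_{\star}|\le\delta$ and $2C|\rho_{n-1}|\le\varepsilon$ hold. Using $|\widetilde S_{n-1}-s_{\star}|^2\le 2|\mu_{n-1}|^2+2|\rho_{n-1}|^2$ and $2C|\rho_{n-1}|^2\le\varepsilon|\rho_{n-1}|$, working in an adapted norm with $\|\jacG\|\le\bar\gamma$ and $\varepsilon$ chosen so that $\bar\gamma+\varepsilon<1$, the recursion becomes $|\rho_n|\le(\bar\gamma+\varepsilon)|\rho_{n-1}|+2C|\mu_{n-1}|^2$ for $n\ge N$ on $A$. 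Iterating from $N$ and multiplying by $\tau_n$, the contribution $\tau_n(\bar\gamma+\varepsilon)^{n-N}|\rho_N|$ is a deterministic bounded sequence $\sup_n\tau_n(\bar\gamma+\varepsilon)^n<\infty$ times the a.s. finite random variable $(\bar\gamma+\varepsilon)^{-N}|\rho_N|$, hence $O_{\mathrm{a.s}}(1)$; the driven part $2C\tau_n\sum_{k\le n}(\bar\gamma+\varepsilon)^{n-k}|\mu_{k-1}|^2$ satisfies, by Minkowski in $\rmL_{p/2}$ and $\lpnorm{|\mu_{k-1}|^2}{p/2}=\lpnorm{\mu_{k-1}}{p}^2=O(\tau_{k-1}^{-1})$, the same summation estimate as in the first paragraph (now with exponent $a$ in place of $a/2$), giving $O_{\rmL_{p/2}}(1)$. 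Since any $\rmL_{p/2}$-bounded sequence and any a.s.-bounded sequence can each be written as an $O_{\rmL_{p/2}}(1)O_{\mathrm{a.s}}(1)$ product and this class is stable under addition, the sum of the two contributions is $O_{\rmL_{p/2}}(1)O_{\mathrm{a.s}}(1)$, which proves $\tau_n\rho_n\1_A=O_{\rmL_{p/2}}(1)O_{\mathrm{a.s}}(1)$.

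The main obstacle is precisely this localization. The quadratic Taylor bound on $r_n$ is valid only in a neighbourhood of $s_{\star}$, so no $\rmL_{p/2}$ bound on $\rho_n$ can hold on the whole probability space; multiplying by $\1_A$ and using the a.s. convergence $\widetilde S_n\to s_{\star}$ produces a \emph{random} entrance time $N$. Because $N$ is not a stopping time with respect to a filtration allowing one to take expectations freely, the prefactor $(\bar\gamma+\varepsilon)^{-N}|\rho_N|$ cannot be controlled in $\rmL_{p/2}$ and must be carried through as an $O_{\mathrm{a.s}}(1)$ factor --- this is exactly what forces the mixed $O_{\rmL_{p/2}}(1)O_{\mathrm{a.s}}(1)$ form rather than a pure $\rmL_{p/2}$ statement. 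A secondary technical point is the absorption of the quadratic self-term $|\rho_{n-1}|^2$ into the contraction, which is legitimate only after $\rho_{n-1}$ has been shown to vanish a.s. on $A$.
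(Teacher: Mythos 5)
Your proof is correct and follows essentially the same route as the paper's, which is only sketched in the main text and deferred to the supplement along the lines of \cite[Theorem~6]{fort:moulines:2003}: the same decomposition \eqref{BOEM:eq:def:mu}, with $\lpnorm{\mu_n}{p}$ controlled by convolving the bound $\lpnorm{e_k}{p}=O(\tau_{k+1}^{-1/2})$ from Theorem~\ref{BOEM:th:LGN}\ref{BOEM:th:LGN:lp} against the geometric weights coming from A\ref{BOEM:assum:stable:fixpoint}\eqref{BOEM:assum:stable:fixpoint:norm}, and $\rho_n$ handled through the quadratic Taylor remainder localized on $\{\lim_n\param_n=\param_{\star}\}$, which is exactly what produces the mixed $O_{\rmL_{p/2}}(1)O_{\mathrm{a.s}}(1)$ factor. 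The only cosmetic difference is that you verify the geometric-times-polynomial summation bound directly by splitting the sum at $\lfloor n/2\rfloor$, where the paper instead invokes \cite[Result~178, p.~39]{polya:1976}.
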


The proof of Proposition~\ref{BOEM:prop:mu:rho:rate} is on the same lines as the
proof of \cite[Theorem 6]{fort:moulines:2003}. The main ingredient is the
control of $\lpnorm{\mu_n}{p}$ which is a consequence of \cite[Result
178, p. 39]{polya:1976} and Theorem~\ref{BOEM:th:LGN}.  The detailed
proof is thus omitted and postponed to the supplementary
material~\cite[Section~$2.2$]{lecorff:fort:2011-supp}.

By Proposition~\ref{BOEM:prop:mu:rho:rate}, the first term in \eqref{BOEM:eq:rate:param} gives
\[
\sqrt{\tau_{n}}\Upsilon(S_{n}-s_{\star})\1_{\lim_{n} \param_{n} =\param_{\star}}  =   O_{\rmL_{p}}(1)+ \frac{1}{\sqrt{\tau_n} } O_{\rmL_{p/2}}(1)O_{\mathrm{a.s}}\left(1\right) \eqsp.
\]
A Taylor expansion with integral remainder term gives the rate of convergence of the second term. This concludes the proof of Theorem~\ref{BOEM:th:rate:nonaverage},  Eq.~(\ref{BOEM:eq:rate:nonaverage}).

\subsection{Proof of Theorem~\ref{BOEM:th:rate:averaged}}
In the sequel, for all function $\Xi$ on $\paramset\times\Yset^{\Zset}$ and all
$\upsilon\in\paramset$, we denote by
$\CE[]{}{}{\Xi(\param,\bfY)}_{\param=\upsilon}$ the function
$\param\mapsto \CE[]{}{}{\Xi(\param,\bfY)}$ evaluated at
$\param=\upsilon$. We preface the proof by the following lemma.
\begin{lemma}
\label{BOEM:lem:mu:rho:average:rate}
Assume A\ref{BOEM:assum:strong}, A\ref{BOEM:assum:moment:sup}-($\bar p$),
A\ref{BOEM:assum:obs}-\ref{BOEM:assum:size-block}, A\ref{BOEM:assum:SMCapprox}-($\bar p$) and A\ref{BOEM:assum:stable:fixpoint} for some $ \bar
p>2$. For any $p\in(2,\bar p)$,
\[
\underset{n\to
  +\infty}{\limsup}\; \frac{1}{\sqrt{T_{n+1}}}\;\lpnorm{\sum_{k=1}^{n}\tau_{k+1}e_{k}}{p}<\infty\eqsp,
\]
where $e_{n}$ is given by \eqref{BOEM:eq:define:epsilon}.
\end{lemma}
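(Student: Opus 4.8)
Recall $e_k = \widetilde S_k - \mapS(\param_k)$ from \eqref{BOEM:eq:define:epsilon}. The plan is to isolate, in $e_k = (\widetilde S_k - S_k) + (S_k - \mapS(\param_k))$, the Monte Carlo error from the genuine statistical fluctuation, and to show that only the latter requires exploiting cancellation. For the Monte Carlo part, the triangle inequality together with A\ref{BOEM:assum:SMCapprox}-($\bar p$), which gives $\lpnorm{\widetilde S_k - S_k}{p} = O(\tau_{k+1}^{-b})$, yields
\[
\lpnorm{\sum_{k=1}^n \tau_{k+1}(\widetilde S_k - S_k)}{p} \le C\sum_{k=1}^n \tau_{k+1}^{1-b}\eqsp.
\]
Under A\ref{BOEM:assum:size-block} one has $\tau_{k+1}\asymp c(k+1)^a$, and the constraint $b\ge(a+1)/(2a)$ forces $a(1-b)\le(a-1)/2$, whence $\sum_{k\le n}\tau_{k+1}^{1-b}=O(n^{(a+1)/2})=O(\sqrt{T_{n+1}})$; this is exactly the reason for the lower bound on $b$ in A\ref{BOEM:assum:SMCapprox}. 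It therefore remains to control $\sum_{k=1}^n V_k$, where $V_k \eqdef \tau_{k+1}(S_k-\mapS(\param_k))$.

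The key structural observation is that, by \eqref{BOEM:eq:rewrite:barS:phi}, $V_k = \sum_{t=T_k+1}^{T_{k+1}}[\smoothfunc{\chi_k,T_k}{\param_k,t,T_{k+1}}(S,\bfY)-\mapS(\param_k)]$ is built from \emph{block-local} smoothing functionals: by \eqref{BOEM:eq:define-Phi} each $\smoothfunc{\chi_k,T_k}{\param_k,t,T_{k+1}}(S,\bfY)$ depends only on $(\chi_k,\param_k)$ and on $Y_{T_k+1:T_{k+1}}$. Let $\mathcal{H}_k$ be the $\sigma$-field generated by $\{Y_u\}_{u\le T_k}$ and by the Monte Carlo randomness used through block $k$; then $\param_k$ and $\chi_k$ are $\mathcal{H}_k$-measurable while $V_k$ is $\mathcal{H}_{k+1}$-measurable. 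We may thus split $V_k = D_k + P_k$ with $D_k \eqdef V_k - \mathbb{E}[V_k\mid\mathcal{H}_k]$ and $P_k \eqdef \mathbb{E}[V_k\mid\mathcal{H}_k]$, so that $\{D_k\}$ is a martingale-difference sequence for $\{\mathcal{H}_{k+1}\}$.

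For the martingale part we apply Burkholder's inequality followed by Minkowski's inequality in $\rmL_{p/2}$,
\[
\lpnorm{\sum_{k=1}^n D_k}{p} \le C_p\left(\sum_{k=1}^n \lpnorm{D_k}{p}^2\right)^{1/2}\eqsp,
\]
and use $\lpnorm{D_k}{p}\le 2\lpnorm{V_k}{p} = 2\tau_{k+1}\lpnorm{S_k-\mapS(\param_k)}{p}\le C\sqrt{\tau_{k+1}}$, the last bound being the single-block estimate $\lpnorm{S_k-\mapS(\param_k)}{p}\le C/\sqrt{\tau_{k+1}}$ established in the proof of Theorem~\ref{BOEM:th:LGN}\ref{BOEM:th:LGN:lp}; this gives $\lpnorm{\sum_k D_k}{p} = O((\sum_k\tau_{k+1})^{1/2})=O(\sqrt{T_{n+1}})$. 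For the predictable part, $P_k = \sum_{t=T_k+1}^{T_{k+1}}\{\mathbb{E}[\smoothfunc{\chi_k,T_k}{\param_k,t,T_{k+1}}(S,\bfY)\mid\mathcal{H}_k]-\mapS(\param_k)\}$ is not zero, but each summand is small: replacing the smoothing functional by $\limE{\param_k}{\shift^t\bfY}$ through the forgetting bound \eqref{BOEM:eq:th:LGN:tool1} and recalling $\mapS(\param)=\mathbb{E}[\limE{\param}{\shift^t\bfY}]$ from \eqref{BOEM:eq:mapS}, the residual is the deviation of a conditional mean from the unconditional mean of a centred functional whose effective support lies at distance $t-T_k$ from $\mathcal{H}_k$. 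Combining the geometric forgetting with the geometric $\beta$-mixing of A\ref{BOEM:assum:obs}\eqref{BOEM:assum:obs:mix}, these residuals decay geometrically away from the block endpoints, and with A\ref{BOEM:assum:moment:sup}-($\bar p$) the geometric sums over $t\in(T_k,T_{k+1}]$ give $\lpnorm{P_k}{p}=O(1)$ per block. Hence $\lpnorm{\sum_k P_k}{p}=O(n)$; since $a>1$ implies $\sqrt{T_{n+1}}\asymp n^{(a+1)/2}$ with $(a+1)/2>1$, this is $o(\sqrt{T_{n+1}})$. Collecting the three contributions yields $\lpnorm{\sum_{k\le n}\tau_{k+1}e_k}{p}=O(\sqrt{T_{n+1}})$.

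The delicate point is the statistical fluctuation: the parameters $(\param_k,\chi_k)$ are chosen adaptively from the past, so the block summands are neither independent nor centred, and the naive triangle inequality — which would sum $\sqrt{\tau_{k+1}}$ over the $n$ blocks and diverge after normalisation by $\sqrt{T_{n+1}}$ — is useless. What makes the argument work is that the exact block statistic $S_k$ is a finite-horizon (hence $\mathcal{H}_{k+1}$-measurable) functional of the observations within its own block while $(\param_k,\chi_k)$ are $\mathcal{H}_k$-measurable: this yields a clean block-martingale structure, and the only price is the non-vanishing conditional bias $P_k$, which the uniform geometric forgetting of the bivariate smoothing distribution together with the $\beta$-mixing of $\bfY$ controls at the negligible order $n$.
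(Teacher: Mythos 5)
Your proposal is correct, and its skeleton coincides with the paper's own proof: both first strip off the Monte Carlo error using A\ref{BOEM:assum:SMCapprox}-($\bar p$) and A\ref{BOEM:assum:size-block} (your computation $\sum_{k\le n}\tau_{k+1}^{1-b}=O(\sqrt{T_{n+1}})$ is precisely why the bound $b\ge (a+1)/2a$ is imposed), and both then split the exact-statistic term into a martingale part plus a conditional bias relative to the block filtration $\widetilde{\mathcal{F}}_{T_k}$ (your $\mathcal{H}_k$), resting on the same structural fact that $S_k$ is a functional of $(\param_k,\chi_k,Y_{T_k+1:T_{k+1}})$ with $(\param_k,\chi_k)$ measurable with respect to the past. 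The differences are in implementation, and your variants are legitimate. For the martingale part, the paper uses Rosenthal's inequality and re-derives the per-block moments through the windowed functionals $F_{t,k}$ of Lemmas~\ref{BOEM:lem:lpcontrol:dec} and~\ref{BOEM:lem:lpcontrol:dec:rho} with window $m_k\sim\log\tau_{k+1}$; you instead use Burkholder plus Minkowski in $\rmL_{p/2}$ and recycle the single-block estimate $\lpnorm{S_k-\mapS(\param_k)}{p}\le C\tau_{k+1}^{-1/2}$ from the proof of part~\ref{BOEM:th:LGN:lp}) of Theorem~\ref{BOEM:th:LGN} --- more modular, and the dominant term $(\sum_k\tau_{k+1})^{1/2}\le\sqrt{T_{n+1}}$ is the same one the paper extracts from its $I_k^{(1)}$ and $I_n^{(2)}$. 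For the bias, the paper keeps the window fixed within each block and pays $\lpnorm{\zeta_k}{p}\le C\,\tau_{k+1}\mix^{m_k/pb}$, made negligible by the logarithmic choice of $m_k$; you adapt the truncation to the distance $t-T_k$ and obtain $\lpnorm{P_k}{p}=O(1)$ per block, hence $O(n)=o(\sqrt{T_{n+1}})$ in total since $a>1$ --- an equally valid accounting. The one step you leave implicit is the decoupling of the adaptive parameter $\param_k$ from the future observations: geometric $\beta$-mixing does not apply directly to conditional expectations evaluated at an $\mathcal{H}_k$-measurable random parameter, and the paper implements this via Berbee's coupling lemma (construction of $\bfY^{\star,(k)}$ satisfying \eqref{BOEM:eq:berbee:mix}) followed by H\"older's inequality, which, because the summands are only bounded in $\rmL_{\bar p}$, degrades the geometric rate by the exponent $\Delta p=1/p-1/\bar p$; your geometric-decay claim survives this degradation, so the argument closes, but that coupling step should be made explicit in a complete write-up.
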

\begin{proof}
  By A\ref{BOEM:assum:size-block} and A\ref{BOEM:assum:SMCapprox}-($\bar p$), we have
  \[
\underset{n\to
  +\infty}{\limsup}\; \frac{1}{\sqrt{T_{n+1}}}\;\sum_{k=1}^{n}\tau_{k+1}\lpnorm{\widetilde S_{k}-S_{k}}{p}<\infty\eqsp.
  \]
  Then, it is sufficient to prove that
  \[
\underset{n\to
  +\infty}{\limsup}\; \frac{1}{\sqrt{T_{n+1}}}\;\lpnorm{\sum_{k=1}^{n}\tau_{k+1}\left(\bar S(\param_{k})-S_{k}\right)}{p}<\infty\eqsp.
  \]
  Let $p\in(2,\bar p)$. In the sequel, $C$ is a constant independent on $n$
  and whose value may change upon each appearance. Let $1\leq m_{n}\leq
  \tau_{n+1}$ and set $v_{n}\eqdef
  \left\lfloor\frac{\tau_{n+1}}{2m_{n}}\right\rfloor$. By
  Lemma~\ref{BOEM:lem:lpcontrol:dec} applied with $q_{k} \eqdef 2v_{k}m_{k}$, we
  have,
\begin{multline*}
\lpnorm{\sum_{k=1}^{n}\tau_{k+1}\left(\bar S(\param_{k})-S_{k}\right)}{p}\\ \leq C \left(\sum_{k=1}^{n} \{
  \tau_{k+1}\rho^{m_{k}} + m_{k} \}
  +\lpnorm{\sum_{k=1}^{n}\{ \delta_k + \zeta_k \}}{p}\right)\eqsp,
\end{multline*}
where $\delta_{k}$ and $\zeta_{k}$ are defined by
\begin{align*}
  \delta_{k}&\eqdef\sum_{t=2m_k}^{2v_{k}m_{k}} \left\{
   F_{t,k}(\param_{k},\bfY)   -
    \CE[\widetilde{\mathcal{F}}_{T_{k}}^{\bfY}]{}{}{F_{t,k}(\param_{k},\bfY) }  \right\}\eqsp,\\
  \zeta_{k}&\eqdef\sum_{t=2m_k}^{2v_{k}m_{k}} \left\{
    \CE[\widetilde{\mathcal{F}}_{T_{k}}^{\bfY}]{}{}{F_{t,k}(\param_{k},\bfY) } -
    \CE[]{}{}{\smoothfunc{\chi,-m_{k}}{\param,0,m_{k}}(S,\bfY)}_{\param =
      \param_k} \right\}
\end{align*}
and where $F_{t,k}(\param_{k},\bfY) \eqdef
  \smoothfunc{\chi,t-m_k}{\param_{k},t,t+m_k}(S,\shift^{T_k}  \bfY)$ and $\widetilde{\mathcal{F}}_{T_{k}}^{\bfY}$ is given by (\ref{BOEM:eq:Zfield:tilde}). We will prove below that there exists $C$ s.t.
\begin{align}
  \label{BOEM:lem:mu:rho:average:rate:tool1}
& \lpnorm{\zeta_k}{p} \leq C \, \mix^{m_{k}/pb} \tau_{k+1} \eqsp,  \qquad  \forall k \geq 1\\
 \label{BOEM:lem:mu:rho:average:rate:tool2}
& \lpnorm{ \sum_{k=1}^n \delta_k}{p} \leq C \sqrt{T_{n+1}} + C
\sum_{k=1}^n \tau_{k+1}\beta^{m_k /pb}\eqsp,  \qquad  \forall n \geq 1
\end{align}
so that the proof is concluded by choosing $m_{k} = \lfloor \eta\log\tau_{k+1}
\rfloor$, $\eta\eqdef \left(-1/\log \rho \right)\vee \left(- pb / \log
  \beta\right)$ and by using A\ref{BOEM:assum:size-block}.

We turn to the proof of (\ref{BOEM:lem:mu:rho:average:rate:tool1}). By the Berbee
Lemma (see \cite[Chapter $5$]{rio:1999}) and A\ref{BOEM:assum:obs}, there
exist $C\in [0,1)$ and $\mix\in (0,1)$ s.t. for all $k\geq 1$, there exists a
random variable $Y^{\star,(k)}_{T_{k}+m_{k}:T_{k+1}+m_{k}}$ on
$(\Omega,\mathcal{F},\PPim)$ independent from $\widetilde{\mathcal{F}}_{T_{k}}^{\bfY}$ with
the same distribution as $Y_{T_{k}+m_{k}:T_{k+1}+m_{k}}$ and
\begin{equation}
\label{BOEM:eq:berbee:mix}
\PP_{}\left\{Y^{\star,(k)}_{T_{k}+m_{k}:T_{k+1}+m_{k}}\neq Y_{T_{k}+m_{k}:T_{k+1}+m_{k}}\right\} \leq C\mix^{m_{k}}\eqsp.
\end{equation}
 Upon
noting that $
\CE[\widetilde{\mathcal{F}}_{T_{k}}^{\bfY}]{}{}{F_{t,k}(\param_{k},\bfY^{\star,(k)})}
=\CE[]{}{}{F_{t,k}(\param,\bfY)}_{\param = \param_k}$, we have
\begin{equation}
  \label{BOEM:eq:lem:mu:rho:average:rate:tool3}
  \zeta_{k} = \sum_{t=2m_k}^{2v_{k}m_{k}} \left\{
  \CE[\widetilde{\mathcal{F}}_{T_{k}}^{\bfY}]{}{}{F_{t,k}(\param_{k},\bfY)} -
  \CE[\widetilde{\mathcal{F}}_{T_{k}}^{\bfY}]{}{}{F_{t,k}(\param_{k},\bfY^{\star,(k)})}
\right\} \eqsp.
\end{equation}
Therefore, by setting $\mathcal{A}_k \eqdef
\{Y^{\star,(k)}_{T_{k}+m_{k}:T_{k+1}+m_{k}}\neq Y_{T_{k}+m_{k}:T_{k+1}+m_{k}}\}$,
      \begin{align*}
        \left|\zeta_{k}\right|
        &\leq \sum_{t=2m_k}^{2v_{k}m_{k}}
        \CE[\widetilde{\mathcal{F}}_{T_{k}}^{\bfY}]{}{}{\underset{\param\in\paramset}{\sup}\left|F_{t,k}(\param,\bfY)-F_{t,k}(\param,\bfY^{\star,(k)})\right|\1_{
            \mathcal{A}_k }}\eqsp.
      \end{align*}
      Minkowski and Holder (with $a\eqdef
      \bar{p}/p$ and $b^{-1}\eqdef 1-a^{-1}$) inequalities, combined with
      \eqref{BOEM:eq:berbee:mix}, A\ref{BOEM:assum:obs}, Lemma~\ref{BOEM:lem:bound-phi} and A\ref{BOEM:assum:moment:sup}-($\bar p$)
      yield (\ref{BOEM:lem:mu:rho:average:rate:tool1}).

      We now prove (\ref{BOEM:lem:mu:rho:average:rate:tool2}).  Upon noting that
      $\delta_{k}$ is $\widetilde{\mathcal{F}}_{T_{k+1}}^{\bfY}$-measurable and
      $\delta_{k}$ is a martingale increment, the Rosenthal inequality
      (see~\cite[Theorem 2.12, p.23]{hall:heyde:1980}) states that
      $\lpnorm{\sum_{k=1}^{n}\delta_{k}}{p} \leq C \left(\sum_{k=1}^n
        I_k^{(1)} \right)^{1/p} + C I_n^{(2)} $ where
         \begin{equation*}
      I_k^{(1)}  \eqdef \CE[]{}{}{\left|\delta_{k}\right|^{p}}\quad\mbox{and}\quad I_n^{(2)}  \eqdef \lpnorm{\left(\sum_{k=1}^{n}\CE[\widetilde{\mathcal{F}}_{T_{k}}^{\bfY}]{}{}{\left|\delta_{k}\right|^{2}}\right)^{1/2}}{p}\eqsp.
      \end{equation*}
      Using again $\CE[\widetilde{\mathcal{F}}_{T_{k}}^{\bfY}]{}{}{F_{t,k}(\param_{k},\bfY^{\star,(k)})}
      =\CE[]{}{}{F_{t,k}(\param,\bfY)}_{\param = \param_k}$ and (\ref{BOEM:eq:lem:mu:rho:average:rate:tool3})
      \[
      I_k^{(1)} \leq
            C\;\lpnorm{\sum_{t=2m_k}^{2v_{k}m_{k}} \left\{F_{t,k}(\param_{k},\bfY) -
          \CE[]{}{}{F_{t,k}(\param,\bfY)}_{\param=\param_{k}}
        \right\}}{p}^p+
      C\;\lpnorm{\zeta_{k}}{p}^p\eqsp.
      \]
      By Lemma~\ref{BOEM:lem:lpcontrol:dec:rho} and
      (\ref{BOEM:lem:mu:rho:average:rate:tool1}), there exists $C$ s.t. for any $k
      \geq 1$
\begin{equation}
\label{BOEM:eq:lpnormterm1}
I_k^{(1)} \leq C   \left( \tau_{k+1}^{p/2} +  \tau_{k+1}^{p} \mix^{m_{k}/b} \right) \eqsp,
\end{equation}
and since $2/p <1$, convex inequalities yield $\left(\sum_{k=1}^n I_k^{(1)}
\right)^{1/p} \leq C \sqrt{T_{n+1}} + C \sum_{k=1}^n \tau_{k+1}\beta^{m_k
  /pb}$.
By the Minkowski and Jensen inequalities, it holds $I_n^{(2)} \leq \left(
  \sum_{k=1}^n \{I_k^{(1)} \}^{2/p} \right)^{1/2}$.
Hence, by (\ref{BOEM:eq:lpnormterm1}), $I_n^{(2)} \leq C \sqrt{T_{n+1}} + C
\sum_{k=1}^n \tau_{k+1}\beta^{m_k /pb}$. This concludes the proof of
(\ref{BOEM:lem:mu:rho:average:rate:tool2}).
\end{proof}
We write $\Sigma_{n}-s_{\star} = \bar \mu_{n}+\bar\rho_{n}$ with
\begin{equation}
\label{BOEM:eq:def:mu:rho:bar}
\bar \mu_{n} \eqdef \frac{1}{T_{n}}\sum_{k=1}^{n}\tau_{k}\mu_{k-1}\quad \mbox{and}\quad\bar \rho_{n} \eqdef \frac{1}{T_{n}}\sum_{k=1}^{n}\tau_{k}\rho_{k-1}\eqsp.
\end{equation}

\begin{proposition}
\label{BOEM:prop:mu:rho:average:rate}
Assume A\ref{BOEM:assum:strong}, A\ref{BOEM:assum:moment:sup}-($\bar p$),
A\ref{BOEM:assum:obs}-\ref{BOEM:assum:size-block}, A\ref{BOEM:assum:SMCapprox}-($\bar p$) and A\ref{BOEM:assum:stable:fixpoint} for some
$\bar p >2$. For any $p \in (2, \bar p)$,
\[
\sqrt{T_n} \bar{\mu}_n = O_{\rmL_{p}}(1) \eqsp, \qquad \frac{T_n}{n}
\, \bar\rho_{n}\1_{\lim_{n}\param_{n}=\param_{\star}}
=O_{\rmL_{p/2}}(1)O_{\mathrm{a.s}}(1) \eqsp.
\]
\end{proposition}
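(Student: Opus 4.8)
The plan is to establish the two assertions separately: the bound on $\bar\rho_{n}$ follows quite directly from Proposition~\ref{BOEM:prop:mu:rho:rate} and a Minkowski inequality, whereas the bound on $\bar\mu_{n}$ is the genuine Polyak--Ruppert averaging step and rests on combining Lemma~\ref{BOEM:lem:mu:rho:average:rate} with Proposition~\ref{BOEM:prop:mu:rho:rate}. Throughout I read $O_{\rmL_{p/2}}(1)O_{\mathrm{a.s}}(1)$ as the existence of a factorisation into an $\rmL_{p/2}$-bounded sequence times an a.s.-bounded sequence.

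For the remainder, I would start from $T_{n}\bar\rho_{n}=\sum_{k=1}^{n}\tau_{k}\rho_{k-1}$. Proposition~\ref{BOEM:prop:mu:rho:rate} gives $\tau_{k-1}\rho_{k-1}\1_{\lim_{n}\param_{n}=\param_{\star}}=O_{\rmL_{p/2}}(1)O_{\mathrm{a.s}}(1)$, and since $\tau_{k}/\tau_{k-1}\to 1$ under A\ref{BOEM:assum:size-block} the same holds for $\tau_{k}\rho_{k-1}\1_{\lim_{n}\param_{n}=\param_{\star}}$; write it as $A_{k}B_{k}$ with $\sup_{k}\lpnorm{A_{k}}{p/2}<\infty$ and $\sup_{k}|B_{k}|<\infty$ $\ps{\PPim}$. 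Factoring out $\sup_{k}|B_{k}|$ (an $O_{\mathrm{a.s}}(1)$ quantity) and applying the Minkowski inequality in $\rmL_{p/2}$ — licit since $p/2>1$ — to $n^{-1}\sum_{k=1}^{n}|A_{k}|$ yields $\frac{T_{n}}{n}\bar\rho_{n}\1_{\lim_{n}\param_{n}=\param_{\star}}=n^{-1}\sum_{k=1}^{n}\tau_{k}\rho_{k-1}\1_{\lim_{n}\param_{n}=\param_{\star}}=O_{\rmL_{p/2}}(1)O_{\mathrm{a.s}}(1)$, which is the second claim.

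For the linearised term I would use the resolvent identity coming from $\mu_{k}=\jacG\mu_{k-1}+e_{k}$: since A\ref{BOEM:assum:stable:fixpoint}\eqref{BOEM:assum:stable:fixpoint:norm} gives $\rho(\jacG)\le\gamma<1$, the matrix $I-\jacG$ is invertible and $\mu_{k-1}=(I-\jacG)^{-1}\left(e_{k}+\mu_{k-1}-\mu_{k}\right)$. Multiplying by $\tau_{k}$ and summing gives
\[
T_{n}\bar\mu_{n}=\sum_{k=1}^{n}\tau_{k}\mu_{k-1}=(I-\jacG)^{-1}\left(\sum_{k=1}^{n}\tau_{k}e_{k}+\sum_{k=1}^{n}\tau_{k}(\mu_{k-1}-\mu_{k})\right)\eqsp.
\]
The innovation sum $\sum_{k=1}^{n}\tau_{k}e_{k}$ is controlled by Lemma~\ref{BOEM:lem:mu:rho:average:rate}: the index shift between $\tau_{k}$ and $\tau_{k+1}$ is immaterial, the correction $\sum_{k}(\tau_{k+1}-\tau_{k})e_{k}$ being lower order thanks to the $\rmL_{p}$-bound $\lpnorm{e_{k}}{p}=O(\tau_{k+1}^{-1/2})$ furnished by Theorem~\ref{BOEM:th:LGN}; this gives $\lpnorm{\sum_{k=1}^{n}\tau_{k}e_{k}}{p}=O(\sqrt{T_{n}})$. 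The telescoping sum I would treat by an Abel transform, which (using $\mu_{0}=0$) reduces it to $\sum_{j=1}^{n-1}(\tau_{j+1}-\tau_{j})\mu_{j}-\tau_{n}\mu_{n}$; invoking $\lpnorm{\mu_{j}}{p}=O(\tau_{j}^{-1/2})$ from Proposition~\ref{BOEM:prop:mu:rho:rate} together with $\tau_{j+1}-\tau_{j}=O(j^{a-1})$ and $\tau_{j}\asymp j^{a}$ from A\ref{BOEM:assum:size-block}, each term is $O(j^{a/2-1})$ and the total is $O(n^{a/2})=o(\sqrt{T_{n}})$ because $T_{n}\asymp n^{a+1}$. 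Adding the two bounds gives $\lpnorm{T_{n}\bar\mu_{n}}{p}=O(\sqrt{T_{n}})$, i.e. $\sqrt{T_{n}}\bar\mu_{n}=O_{\rmL_{p}}(1)$.

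The main obstacle is the innovation sum. A crude bound $\sum_{k}\tau_{k}\lpnorm{e_{k}}{p}=O(n^{a/2+1})$ overshoots the target $\sqrt{T_{n}}\asymp n^{(a+1)/2}$, so the near-martingale cancellation encapsulated in Lemma~\ref{BOEM:lem:mu:rho:average:rate} — built on the Berbee coupling, the Rosenthal inequality and the $\beta$-mixing of $\bfY$ — is indispensable; everything else, namely the resolvent identity, the Abel transform and the Minkowski step for $\bar\rho_{n}$, is routine once that sharp bound is available.
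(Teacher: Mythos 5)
Your proposal is correct and follows essentially the same route as the paper: the paper's proof likewise inverts $I-\jacG$ through the recursion \eqref{BOEM:eq:def:mu}, writing $(I-\jacG)\sqrt{T_n}\,\bar\mu_{n} = -\tau_{n+1}\mu_{n}/\sqrt{T_n} + T_n^{-1/2}\sum_{k=1}^{n}\tau_{k+1}e_{k} + T_n^{-1/2}\sum_{k=1}^{n}(\tau_{k+1}-\tau_{k})\jacG\mu_{k-1}$, an identity algebraically equivalent to your resolvent-plus-Abel decomposition (your shift correction $\sum_{k}(\tau_{k+1}-\tau_{k})e_{k}$, bounded via Theorem~\ref{BOEM:th:LGN}, merely redistributes terms between the innovation and remainder sums), and then concludes from Lemma~\ref{BOEM:lem:mu:rho:average:rate}, Proposition~\ref{BOEM:prop:mu:rho:rate} and A\ref{BOEM:assum:size-block}, exactly the three inputs you invoke. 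The bound on $\bar\rho_{n}$ is treated in the paper precisely as you do, directly from \eqref{BOEM:eq:def:mu:rho:bar} and Proposition~\ref{BOEM:prop:mu:rho:rate}.
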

\begin{proof}
  Set $A \eqdef \left(I-\jacG\right)$. Under A\ref{BOEM:assum:stable:fixpoint},
  $A^{-1}$ exists.  By \eqref{BOEM:eq:def:mu}
  and \eqref{BOEM:eq:def:mu:rho:bar},
\[
A \sqrt{T_{n}}\bar\mu_{n} = -\frac{\tau_{n+1}\mu_{n}}{\sqrt{T_{n}}} +
\frac{1}{\sqrt{T_{n}}}\sum_{k=1}^{n} \tau_{k+1}e_{k} +
\frac{1}{\sqrt{T_{n}}}\sum_{k=1}^{n}\tau_{k}\left(\frac{\tau_{k+1}}{\tau_{k}}-1\right)
\jacG\mu_{k-1}\eqsp.
\]
The result now follows from Proposition~\ref{BOEM:prop:mu:rho:rate},
Lemma~\ref{BOEM:lem:mu:rho:average:rate} and A\ref{BOEM:assum:size-block}.
The proof of the second assertion follows from \eqref{BOEM:eq:def:mu:rho:bar} and
Proposition~\ref{BOEM:prop:mu:rho:rate}.
\end{proof}
Upon noting that $\param_{\star} = \bar\param(s_{\star})$, we may write, for the averaged sequence,
\begin{equation*}
\widetilde{\param}_{n} - \param_{\star} = \Upsilon(\Sigma_{n}-s_{\star}) + \bar\param(\Sigma_{n}) -  \bar\param(s_{\star}) - \Upsilon(\Sigma_{n}-s_{\star})\eqsp.
\end{equation*}
The first term in this decomposition gives
\[
\sqrt{T_{n}}\Upsilon(\Sigma_{n}-s_{\star})\1_{\lim_{n} \param_{n} =\param_{\star}}  =  O_{\rmL_{p}}(1)+ \frac{n}{\sqrt{T_n} }O_{\rmL_{p/2}}(1)O_{\mathrm{a.s}}\left(1\right) \eqsp.
\]
By A\ref{BOEM:assum:stable:fixpoint}\eqref{BOEM:assum:stable:fixpoint:norm}, as for the non averaged sequence, a Taylor expansion with integral remainder term gives the result for the second term. This concludes the proof of Theorem~\ref{BOEM:th:rate:averaged}, Eq.(\ref{BOEM:eq:rate:averaged}).

\section{Acknowledgments}
The authors are grateful to Eric Moulines and Olivier Capp\'e for their
fruitful remarks.

\appendix

\section{Technical results}
\label{BOEM:sec:tech:res}
Proposition~\ref{BOEM:prop:fort:moulines:2003} is exactly \cite[Proposition~$9$]{fort:moulines:2003} applied with a compact set $\paramset$.
\begin{proposition}
\label{BOEM:prop:fort:moulines:2003}
Let $T:\paramset\to\paramset$ and $\lyap$ be a continuous Lyapunov function relatively to $T$ and to $\Staset\subset\paramset$. Assume $\lyap(\Staset)$ has an empty interior and that $\{\param_{n}\}_{n\ge 0}$ is a sequence lying in $\paramset$ such that
\begin{equation}
\label{BOEM:eq:fort:moulines:2003}
\underset{n\to +\infty}{\lim}\left|\lyap(\param_{n+1})-\lyap\circ T(\param_{n})\right| = 0\eqsp.
\end{equation}
Then, there exists $w_{\star}$ such that $\{\param_{n}\}_{n\ge 0}$ converges to $\{\param\in\Staset;\; \lyap(\param)=w_{\star}\}$.
\end{proposition}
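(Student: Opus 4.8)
The plan is to follow the classical stability argument for perturbed deterministic recursions: first show that the Lyapunov values $\lyap(\param_n)$ converge to a single limit $w_\star$, then show that every limit point of $\{\param_n\}$ lies in $\Staset$ and carries $\lyap$-value $w_\star$, and finally invoke compactness to upgrade this into convergence of the whole sequence to the level set $\{\param\in\Staset;\lyap(\param)=w_\star\}$. Throughout I would write $v_n\eqdef\lyap(\param_n)$, $a_n\eqdef\lyap(\param_{n+1})-\lyap\circ T(\param_n)$ and $b_n\eqdef\lyap\circ T(\param_n)-\lyap(\param_n)$, so that $v_{n+1}-v_n=a_n+b_n$, where $a_n\to0$ by \eqref{BOEM:eq:fort:moulines:2003} and $b_n\geq0$ by the first Lyapunov property. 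Since $\paramset$ is compact and $\lyap$ is continuous, $\{v_n\}$ is bounded; set $w^-\eqdef\liminf_n v_n$ and $w^+\eqdef\limsup_n v_n$.

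The decisive step, which I expect to be the main obstacle, is to prove $w^-=w^+$; this is exactly where the two hypotheses on $\Staset$ enter. I would argue by contradiction, assuming $w^-<w^+$. Because $\Staset$ is the fixed-point set of the continuous map $T$ on the compact set $\paramset$, it is closed, so $\lyap(\Staset)$ is compact; the empty-interior hypothesis then makes $\lyap(\Staset)$ nowhere dense, with open and dense complement, and hence there is a nondegenerate interval $[\alpha,\beta]\subseteq(w^-,w^+)$ with $[\alpha,\beta]\cap\lyap(\Staset)=\emptyset$. Then $\K\eqdef\{\param\in\paramset;\alpha\leq\lyap(\param)\leq\beta\}$ is a compact subset of $\paramset\setminus\Staset$, so the second Lyapunov property yields $\eta\eqdef\inf_{\param\in\K}\{\lyap\circ T(\param)-\lyap(\param)\}>0$; that is, $b_n\geq\eta$ whenever $v_n\in[\alpha,\beta]$. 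I would then fix $N$ with $|a_n|<\min(\eta,\beta-\alpha)$ for $n\geq N$ and show that the region $\{v_n\geq\alpha\}$ is absorbing past $N$: if $v_n\in[\alpha,\beta]$ then $v_{n+1}\geq v_n+b_n-|a_n|>\alpha$, while if $v_n>\beta$ then $v_{n+1}\geq v_n-|a_n|>\alpha$. Since $w^+>\beta$ forces $v_{n_0}\geq\alpha$ for some $n_0\geq N$, induction gives $v_n\geq\alpha$ for all $n\geq n_0$, whence $\liminf_n v_n\geq\alpha>w^-$, a contradiction. Thus $w^-=w^+\eqdef w_\star$ and $\lyap(\param_n)\to w_\star$.

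With convergence of $\{v_n\}$ in hand, the remaining steps are routine. Let $L$ be the set of limit points of $\{\param_n\}$, which is nonempty and compact. Continuity of $\lyap$ gives $\lyap(z)=w_\star$ for every $z\in L$, so $L\subseteq\lyap^{-1}(\{w_\star\})$. To see $L\subseteq\Staset$, suppose some $z\in L$ were not in $\Staset$; applying the second Lyapunov property to the compact singleton $\{z\}$ gives $\lyap\circ T(z)-\lyap(z)>0$, so along a subsequence $\param_{n_k}\to z$ the continuity of $T$ and $\lyap$ yields $b_{n_k}\to\lyap\circ T(z)-\lyap(z)>0$, whence $v_{n_k+1}-v_{n_k}=a_{n_k}+b_{n_k}$ tends to a strictly positive limit, contradicting $v_n\to w_\star$. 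Therefore $L\subseteq\{\param\in\Staset;\lyap(\param)=w_\star\}\eqdef A$.

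Finally, in the compact metric space $\paramset$ a sequence always converges to its set of limit points, so $d(\param_n,L)\to0$; since $L\subseteq A$ this gives $d(\param_n,A)\to0$, which is precisely the asserted convergence. I would close by remarking that, as the statement is quoted essentially verbatim from \cite[Proposition~9]{fort:moulines:2003} specialized to a compact parameter set, one may simply cite that reference; the sketch above is included to record why each of the two Lyapunov conditions and the empty-interior assumption on $\lyap(\Staset)$ is genuinely used.
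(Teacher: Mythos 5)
Your proposal is correct, but note that the paper contains no proof of this proposition to compare against: it is dispatched with the remark that it ``is exactly \cite[Proposition~9]{fort:moulines:2003} applied with a compact set $\paramset$'', so what you have written is a self-contained reconstruction of the cited result. Your reconstruction is sound and follows the standard scheme: the absorption argument (pick $[\alpha,\beta]\subset(w^-,w^+)$ disjoint from $\lyap(\Staset)$, get $\eta>0$ from the second Lyapunov property on $\K=\{\param\in\paramset;\,\alpha\leq\lyap(\param)\leq\beta\}$, and show $\{v_n\geq\alpha\}$ is absorbing once $|a_n|<\min(\eta,\beta-\alpha)$) correctly forces $\liminf_n v_n\geq\alpha>w^-$, and the limit-set step and the compactness upgrade $d(\param_n,L)\to 0$ are both fine. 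The one caveat worth flagging is that you invoke two hypotheses that are not literally in the statement as quoted: closedness of $\Staset$ (needed so that $\lyap(\Staset)$ is compact, hence nowhere dense, which is what lets you extract $[\alpha,\beta]$ from the empty-interior assumption) and continuity of $T$ (used both to justify that closedness and to get $b_{n_k}\to\lyap\circ T(z)-\lyap(z)$ at a limit point $z$). Both hold in the paper's application --- $T=\limEMmapt=\bar\param\circ\mapS$ is continuous by A\ref{BOEM:assum:exp}\eqref{BOEM:assum:exp:max} and Theorem~\ref{BOEM:th:LGN}, and $\Staset$ is its fixed-point set --- and they are part of the framework of \cite{fort:moulines:2003}, so this is a matter of stating standing assumptions rather than a gap. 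If you wish, you can eliminate the use of continuity of $T$ in the limit-point step: since $\Staset$ is closed, a point $z\in L\setminus\Staset$ admits a closed ball $\bar{B}(z,\epsilon)$ contained in $\paramset\setminus\Staset$; applying the second Lyapunov property to this compact set gives $b_{n_k}\geq\eta'>0$ for all large $k$ along the subsequence $\param_{n_k}\to z$, which yields the same contradiction with $v_n\to w_\star$ and makes the argument rely on continuity of $\lyap$ alone.
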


The proof of Proposition~\ref{BOEM:prop:expforget} is given in \cite[Proposition~$3.3$]{lecorff:fort:2011-supp}

\begin{proposition}\label{BOEM:prop:expforget}
  Assume A\ref{BOEM:assum:strong}. Let $\chi$, $\widetilde{\chi}$ be two
  distributions on $\left(\Xset,\sigmaX\right)$.  For any measurable
  function $h: \Xset^{2}\times \Yset \to \Rset^d$ and any $\bfy \in
  \Yset^\Zset$ such that $\sup_{x,x'}|h(x,x^{\prime},y_s)| < +\infty$ for any $s
  \in \Zset$
\begin{enumerate}[(i)]
\item \label{BOEM:item:forget-nu-pv} For any $r < s\leq t$ and any $\ell_{1},
  \ell_2\geq 1$,
\begin{equation}\label{BOEM:eq:forget-nu-pv}
\underset{\param\in\paramset}{\sup}\left | \smoothfunc{\widetilde{\chi},r}{\param,s,t}\left(h,\bfy\right) - \smoothfunc{\chi,r-\ell_{1}}{\param,s,t+\ell_{2}}\left(h,\bfy\right) \right | \leq \left (\rho^{s-1-r} + \rho^{t-s}\right )\osc(h_{s})\eqsp.
\end{equation}
\item \label{BOEM:item:forget-limit} For any $\param\in\paramset$, there exists a
  function $\bfy \mapsto \Phi_{\param}(h,\bfy)$ s.t. for any distribution
  $\chi$ on $(\Xset,\sigmaX)$ and any $r<s\leq t$
\begin{equation}\label{BOEM:eq:forget-limit}
\underset{\param\in\paramset}{\sup}\left|\smoothfunc{\chi,r}{\param,s,t}\left(h,\bfy\right) - \smoothfunc{}{\param}\left(h,\shift^s\bfy\right)\right|\leq \left (\rho^{s-1-r} + \rho^{t-s}\right )\osc(h_{s})\eqsp.
\end{equation}
\end{enumerate}
\end{proposition}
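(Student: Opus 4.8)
The plan is to exploit the forward–backward factorisation of the bivariate smoothing functional $\smoothfunc{\chi,r}{\param,s,t}(h,\bfy)$ and to isolate its two sources of dependence: the initial distribution $\chi$ at the left endpoint $r$, and the observation window through the right endpoint $t$. Integrating out the coordinates $x_{r:s-2}$ and $x_{s+1:t}$ in \eqref{BOEM:eq:define-Phi} yields
\[
\smoothfunc{\chi,r}{\param,s,t}(h,\bfy)=\frac{\iint \varphi^{\chi,r}_{s-1}(\rmd x_{s-1})\,m_\param(x_{s-1},x_s)\,g_\param(x_s,y_s)\,\beta^{t}_{s}(x_s)\,h(x_{s-1},x_s,y_s)\,\lambda(\rmd x_s)}{\iint \varphi^{\chi,r}_{s-1}(\rmd x_{s-1})\,m_\param(x_{s-1},x_s)\,g_\param(x_s,y_s)\,\beta^{t}_{s}(x_s)\,\lambda(\rmd x_s)}\eqsp,
\]
where $\varphi^{\chi,r}_{s-1}$ is the normalised filter at time $s-1$ (carrying all the dependence on $\chi$ and $r$) and $\beta^{t}_{s}$ is the normalised backward function (carrying all the dependence on $t$).

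The key structural observation is that this joint law admits two complementary factorisations: the conditional of $x_s$ given $x_{s-1}$ is proportional to $m_\param(x_{s-1},x_s)g_\param(x_s,y_s)\beta^{t}_{s}(x_s)$ and hence does not depend on $(\chi,r)$, while the conditional of $x_{s-1}$ given $x_s$ is proportional to $\varphi^{\chi,r}_{s-1}(\rmd x_{s-1})m_\param(x_{s-1},x_s)$ and hence does not depend on $t$. To prove \eqref{BOEM:eq:forget-nu-pv} I would insert the intermediate term $\smoothfunc{\chi,r-\ell_1}{\param,s,t}$ and apply the triangle inequality, so that only the left endpoint changes in the first difference (window $[\cdot,t]$ frozen) and only the right endpoint changes in the second (left data $[r-\ell_1,\cdot]$ frozen). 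For the first difference the common conditional of $x_s$ given $x_{s-1}$ makes the bivariate total variation distance equal to that of the $x_{s-1}$-marginals, which are exactly the marginal smoothing distributions of $X_{s-1}$; for the second difference the common conditional of $x_{s-1}$ given $x_s$ reduces it to the $x_s$-marginals, the marginal smoothing distributions of $X_s$.

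Each of these one-dimensional differences is then controlled by the elementary inequality $|\mu(f)-\nu(f)|\le \osc(f)\,\normTV{\mu-\nu}$ (with the total variation normalised to $[0,1]$), after centring $h$ so that $\osc(h_s)$ replaces the sup-norm, and after noting that the function of a single coordinate induced by integrating the frozen conditional has oscillation at most $\osc(h_s)$. The total variation distances are bounded by the classical forward–backward contraction estimates under A\ref{BOEM:assum:strong}, as in \cite{delmoral:guionnet:1998,delmoral:ledoux:miclo:2003}: the bound $\sigma_-\le m_\param\le\sigma_+$ gives a Dobrushin factor $\rho=1-\sigma_-/\sigma_+$ per step, so the marginal smoothing law of $X_{s-1}$ forgets its left initial condition at rate $\rho^{s-1-r}$ (the binding exponent coming from the later endpoint $r$) and the marginal smoothing law of $X_s$ forgets its right terminal condition at rate $\rho^{t-s}$ (the binding exponent coming from the earlier endpoint $t$). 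Summing the two contributions and taking the supremum over $\param\in\paramset$ (the rate $\rho$ being independent of $\param$ under A\ref{BOEM:assum:strong}) gives \eqref{BOEM:eq:forget-nu-pv}.

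Part \eqref{BOEM:item:forget-limit} then follows from \eqref{BOEM:eq:forget-nu-pv} combined with the shift invariance $\smoothfunc{\chi,r}{\param,s,t}(h,\bfy)=\smoothfunc{\chi,r-s}{\param,0,t-s}(h,\shift^{s}\bfy)$, which is read off directly from \eqref{BOEM:eq:define-Phi}. By \eqref{BOEM:eq:forget-nu-pv} the family $\smoothfunc{\chi,r}{\param,s,t}(h,\bfy)$ is Cauchy as $r\to-\infty$ and $t\to+\infty$ and its limit is independent of $\chi$; via the shift identity this limit is a function of $\shift^{s}\bfy$ alone, which we denote $\smoothfunc{}{\param}(h,\shift^{s}\bfy)$. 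Fixing $(\chi,r,s,t)$ and letting $\ell_1,\ell_2\to+\infty$ in \eqref{BOEM:eq:forget-nu-pv} applied with $\widetilde\chi=\chi$ passes the estimate to the limit and yields \eqref{BOEM:eq:forget-limit}. I expect the main obstacle to be the bivariate coupling proper, that is, retaining the clean $\osc(h_s)$ dependence and the two separate exponents while $h$ depends on the pair $(x_{s-1},x_s)$; the resolution is precisely the pair of complementary factorisations above, which freeze one conditional at a time and collapse each difference onto an already-understood marginal smoothing quantity, the genuinely new point compared with the marginal case being that the forward and backward perturbations decouple in this way.
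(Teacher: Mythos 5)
Your proof is correct and follows essentially the route the paper takes (the paper defers the details to \cite[Proposition~3.3]{lecorff:fort:2011-supp}, invoking the classical forgetting machinery of \cite{delmoral:guionnet:1998,delmoral:ledoux:miclo:2003} under A\ref{BOEM:assum:strong}): a forward--backward factorisation with uniform Doeblin minorisation $m_\param\ge(\sigma_-/\sigma_+)\sigma_+$ giving Dobrushin coefficient $\rho$ per step, the triangle inequality through the intermediate term $\smoothfunc{\chi,r-\ell_1}{\param,s,t}$ separating the two exponents, and the limit in \eqref{BOEM:item:forget-limit} obtained from the uniform Cauchy property together with shift invariance. Your reduction of the bivariate difference to the marginal smoothing laws by freezing, in turn, the conditional of $x_s$ given $x_{s-1}$ (independent of $(\chi,r)$) and of $x_{s-1}$ given $x_s$ (independent of $t$), with $\osc(\K h)\le\osc(h_s)$, is precisely the ``main novelty'' the paper flags for the bivariate case, and it is handled correctly.
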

\begin{remark}\label{BOEM:rem:lgn}
\begin{enumerate}[(a)]
\item If $\chi=\widetilde{\chi}$, $\ell_{1}=0$ and $\ell_{2}\geq 1$,
  \eqref{BOEM:eq:forget-nu-pv} becomes
\[
\underset{\param\in\paramset}{\sup}\left | \smoothfunc{\chi,r}{\param,s,t}\left(h,\bfy\right) - \smoothfunc{\chi,r}{\param,s,t+\ell_{2}}\left(h,\bfy\right) \right | \leq \rho^{t-s}\osc(h_{s})\eqsp.
\]
\item if $\ell_{2} = 0$ and $\ell_{1}\geq 1$, \eqref{BOEM:eq:forget-nu-pv} becomes
\[
\underset{\param\in\paramset}{\sup}\left | \smoothfunc{\widetilde{\chi},r}{\param,s,t}\left(h,\bfy\right) - \smoothfunc{\chi,r-\ell_{1}}{\param,s,t}\left(h,\bfy\right) \right | \leq \rho^{s-1-r}\osc(h_{s})\eqsp.
\]
\end{enumerate}
\end{remark}

Lemma~\ref{BOEM:lem:bound-phi} is a consequence of \eqref{BOEM:eq:define-Phi}
and of Proposition~\ref{BOEM:prop:expforget}\eqref{BOEM:item:forget-limit}.
\begin{lemma}
\label{BOEM:lem:bound-phi}
Assume A\ref{BOEM:assum:strong}.  Let $r<s\leq t$ be integers, $\param\in \paramset$
and $\bfy\in\Yset^\Zset$, and $h: \Xset^{2}\times \Yset \to \Rset^d$ s.t. for
any $s \in \Zset$, $\sup_{x,x'} |h(x,x^{\prime},y_s)|< \infty$. Then
\[
\left|\smoothfunc{\chi,r}{\param,s,t}\left(h,\bfy\right)\right|\leq
\underset{(x,x^{\prime})\in\Xset^2}{\sup}\left|h(x,x^\prime,y_s)\right|
\eqsp,\,
\left|\smoothfunc{}{\param}\left(h,\shift^s\bfy\right)\right|\leq\underset{(x,x^{\prime})\in\Xset^2}{\sup}\left|h(x,x^\prime,y_s)\right|\eqsp.
\]
\end{lemma}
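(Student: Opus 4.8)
The plan is to obtain the two bounds separately: the first directly from the defining formula \eqref{BOEM:eq:define-Phi}, and the second by passing to the limit via Proposition~\ref{BOEM:prop:expforget}\eqref{BOEM:item:forget-limit}, exactly as the preamble to the lemma suggests.

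First I would rewrite \eqref{BOEM:eq:define-Phi} as a weighted average. Setting $w(x_{r:t}) \eqdef \chi(x_r)\prod_{i=r}^{t-1} m_\param(x_i,x_{i+1}) g_\param(x_{i+1},y_{i+1})$, every factor is nonnegative and, under A\ref{BOEM:assum:strong}, $m_\param \geq \sigma_- > 0$; together with the finiteness of $\lambda$ this makes the denominator $\int w\,\lambda(\rmd x_{r:t})$ strictly positive and finite. Thus $\smoothfunc{\chi,r}{\param,s,t}(h,\bfy)$ is the integral of $h(x_{s-1},x_s,y_s)$ against the probability measure proportional to $w$. The first inequality then follows by moving the Euclidean norm inside the integral (triangle inequality for integrals) and bounding the integrand pointwise: since $|h(x_{s-1},x_s,y_s)| \leq \sup_{(x,x')\in\Xset^{2}}|h(x,x',y_s)|$ uniformly in $(x_{s-1},x_s)$, the numerator is at most $\sup_{(x,x')\in\Xset^{2}}|h(x,x',y_s)|$ times the denominator, and the two copies of $\int w\,\lambda(\rmd x_{r:t})$ cancel.

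For the second inequality I would invoke Proposition~\ref{BOEM:prop:expforget}\eqref{BOEM:item:forget-limit}, by which $\smoothfunc{}{\param}(h,\shift^s\bfy)$ is the pointwise limit of $\smoothfunc{\chi,r}{\param,s,t}(h,\bfy)$ as $r\to-\infty$ and $t\to+\infty$. Because the bound just established is uniform in $r$ and $t$ — the constant $\sup_{(x,x')\in\Xset^{2}}|h(x,x',y_s)|$ depends only on the time index $s$ — it is preserved in the limit, which gives the second claim.

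There is no genuine obstacle here: the argument is just the elementary fact that a conditional expectation cannot exceed the sup-norm of its integrand. The only points deserving a line of justification are the strict positivity and finiteness of the normalizing constant (supplied by A\ref{BOEM:assum:strong} and the finiteness of $\lambda$), and the bookkeeping that the zeroth coordinate of $\shift^s\bfy$ is $y_s$, so that the supremum on the right-hand side is correctly taken at time index $s$.
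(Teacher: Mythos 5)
Your proposal is correct and is essentially the paper's own argument: the paper proves this lemma in one line, noting it is a consequence of the definition \eqref{BOEM:eq:define-Phi} (the functional $\smoothfunc{\chi,r}{\param,s,t}$ is the expectation of $h(x_{s-1},x_s,y_s)$ under a probability measure, hence bounded by $\sup_{(x,x')\in\Xset^2}|h(x,x',y_s)|$) together with Proposition~\ref{BOEM:prop:expforget}\eqref{BOEM:item:forget-limit}, which transfers the uniform-in-$(r,t)$ bound to the limiting quantity $\smoothfunc{}{\param}(h,\shift^s\bfy)$, exactly as you do. One small overstatement, immaterial to the conclusion: strict positivity of the normalizing integral needs $\int g_\param(x,y_i)\lambda(\rmd x)>0$ in addition to $m_\param\geq\sigma_->0$, but the bound holds whenever the smoothing functional is well defined, so nothing in your argument breaks.
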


For any $L\geq 1$, $m\geq 1$ and any
distribution $\chi$ on $(\Xset,\sigmaX)$, define
\begin{equation}
\label{BOEM:eq:def:kappa}
\kappa_{L,m}^{\chi}(\boldsymbol{\param},\bfY) \eqdef \smoothfunc{\chi,L-m}{\boldsymbol{\param},L,L+m}(S,\bfY) -\CE[]{}{}{\smoothfunc{\chi,-m}{\upsilon,0,m}(S,\bfY)}_{\upsilon=\boldsymbol{\param}}\eqsp.
\end{equation}
 We introduce the $\sigma$-algebra $\widetilde{\mathcal{F}}_{T_{n}}$ defined by
 \begin{equation}
 \label{BOEM:eq:Zfield:tilde}
 \widetilde{\mathcal{F}}_{T_{n}}\eqdef\sigma\{\mathcal{F}_{T_{n}}^{\bfY},\mathcal{H}_{T_{n}}\}\eqsp,
 \end{equation}
 where $\mathcal{F}_{T_{n}}$ is given by \eqref{BOEM:eq:Zfield} and where $\mathcal{H}_{T_{n}}$ is independent from $\bfY$ (the $\sigma$-algebra $\mathcal{H}_{T_{n}}$ is generated by the random variables independent from the
observations $\bfY$ used to produce the Monte Carlo approximation of $\{S_{k-1}\}_{k=1}^{n}$).
Hence, for any positive integer $m$ and any $B\in\mathcal{G}_{T_{n}+m}^{\bfY}$,
since $\mathcal{H}_{T_{n}}$ is independent from $B$ and from
$\mathcal{F}_{T_{n}}^{\bfY}$,  $\PPim(B\vert \widetilde{\mathcal{F}}_{T_{n}})= \PPim(B\vert\mathcal{F}_{T_{n}}^{\bfY})$.
Hence, the mixing coefficients defined in \eqref{BOEM:eq:def:mixing} are such that
\[
\mix{}(\mathcal{G}_{T_n+m}^{\bfY},\widetilde{\mathcal{F}}_{T_n}) = \mix(\mathcal{G}_{T_n+m}^{\bfY},\mathcal{F}_{T_n^{\bfY}})\eqsp.
\]
Note that $\param_{n}$ is $\widetilde{\mathcal{F}}_{T_{n}}$- measurable and that $\widetilde{S}_{n}$ is $\widetilde{\mathcal{F}}_{T_{n+1}}$-measurable.
\begin{lemma}
\label{BOEM:lem:lpcontrol}
Assume A\ref{BOEM:assum:strong}, A\ref{BOEM:assum:moment:sup}-($\bar p$) and
A\ref{BOEM:assum:obs} for some
$\bar p >2$. Let $p\in(2,\bar p)$. There exists a constant $C$ s.t. for any
distribution $\chi$ on $(\Xset,\sigmaX)$, any $m\geq 1$,
$k, \ell\geq 0$ and any $\paramset$-valued $\widetilde{\mathcal{F}}_{0}^{\bfY}$-measurable
r.v. $\boldsymbol{\param}$,
\begin{equation*}
\lpnorm{\sum_{u=1}^{k}  \kappa_{2um+\ell,m}^{\chi}(\boldsymbol{\param},\bfY)}{p}\leq C\left[\sqrt{\frac{k}{m}}+k\mix^{m \, \Delta p}\right]\eqsp,
\end{equation*}
where $\Delta p \eqdef \frac{\bar p - p}{p\bar p}$ and $\mix$ is given by A\ref{BOEM:assum:obs}.
\end{lemma}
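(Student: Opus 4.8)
The plan is to follow the classical two-step scheme for $\rmL_p$-control of partial sums of a $\beta$-mixing sequence: first decouple the $k$ summands into genuinely independent blocks at the cost of a mixing-error term, then apply a Rosenthal-type moment inequality to the decoupled sum. This is the same scheme used later in the proof of Lemma~\ref{BOEM:lem:mu:rho:average:rate}, and here it is cleaner because $\boldsymbol\param$ is $\widetilde{\mathcal{F}}_{0}^{\bfY}$-measurable while every window involved sits at indices $\ge m+\ell+1>0$; thus $\boldsymbol\param$ may be frozen and treated as a deterministic parameter throughout.

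First I would record the features of $\kappa_{2um+\ell,m}^{\chi}(\boldsymbol\param,\bfY)$ that drive the argument. By the stationarity of $\bfY$ (A\ref{BOEM:assum:obs}\eqref{BOEM:assum:obs:mix}) the functional $\smoothfunc{\chi,L-m}{\boldsymbol\param,L,L+m}(S,\bfY)$ has, for $L=2um+\ell$ and conditionally on $\boldsymbol\param$, the same law as $\smoothfunc{\chi,-m}{\boldsymbol\param,0,m}(S,\bfY)$, so each summand is centered. By Lemma~\ref{BOEM:lem:bound-phi} it is bounded in modulus by $\sup_{x,x'}|S(x,x',Y_{2um+\ell})|$, which lies in $\rmL_{\bar p}$ by A\ref{BOEM:assum:moment:sup}-($\bar p$). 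Finally $\kappa_{2um+\ell,m}^{\chi}$ depends on $\bfY$ only through the window $Y_{(2u-1)m+\ell+1:(2u+1)m+\ell}$, and these windows are pairwise disjoint.

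Next comes the decoupling. Since consecutive windows are merely adjacent, I would first use the forgetting property (Proposition~\ref{BOEM:prop:expforget}\eqref{BOEM:item:forget-nu-pv}) to replace each summand by one that disregards the outermost observations of its window, at a cost $O(\rho^{m})$ per block, thereby creating an effective separation of order $m$. On the separated blocks I would apply Berbee's coupling lemma (\cite[Chapter~$5$]{rio:1999}) together with the geometric bound $\mix^{\bfY}(n)\le C\mix^{n}$ of A\ref{BOEM:assum:obs}\eqref{BOEM:assum:obs:mix}, producing for each $u$ an independent copy $\kappa_u^{\star}$ that coincides with $\kappa_u$ off an event of probability $\le C\mix^{m}$. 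Writing $\kappa_u=(\kappa_u-\kappa_u^{\star})+\kappa_u^{\star}$, the coupling remainder is controlled by H\"older's inequality with exponents $\bar p/p$ and its conjugate: each increment lies in $\rmL_{\bar p}$ by A\ref{BOEM:assum:moment:sup}-($\bar p$) and is carried by the mismatch event, which gives $\lpnorm{\kappa_u-\kappa_u^{\star}}{p}\le C\mix^{m\Delta p}$ with $\Delta p=(\bar p-p)/(p\bar p)$, and hence a total contribution $\le Ck\mix^{m\Delta p}$ --- the second term of the claimed bound.

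It remains to bound $\sum_{u=1}^{k}\kappa_u^{\star}$, a sum of centered, conditionally independent martingale increments, by Rosenthal's inequality (\cite[Theorem~2.12]{hall:heyde:1980}). This reduces the task to the predictable quadratic-variation term $(\sum_u \CE{}{}{(\kappa_u^{\star})^{2}})^{1/2}$, the companion term $(\sum_u\CE{}{}{|\kappa_u^{\star}|^{p}})^{1/p}$ being of smaller order since $p>2$. The heart of the matter --- and the step I expect to be the main obstacle --- is the second-moment estimate for a single block that turns this quadratic variation into the stated $\sqrt{k/m}$: it must combine the $\rmL_{\bar p}$-integrability of A\ref{BOEM:assum:moment:sup}-($\bar p$) with the exponential forgetting of Proposition~\ref{BOEM:prop:expforget} on the disjoint windows, and it has to be calibrated jointly with the choice of the coupling margin so that the mixing error $Ck\mix^{m\Delta p}$ does not overwhelm the main term. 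Collecting the two contributions, and noting that every bound is uniform in $\chi$, $\ell$ and in the frozen $\boldsymbol\param$, yields the lemma.
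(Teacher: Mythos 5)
There are two genuine gaps in your attempt, both at load-bearing points. First, you cannot ``freeze $\boldsymbol{\param}$ and treat it as a deterministic parameter throughout.'' Since $\boldsymbol{\param}$ is $\widetilde{\mathcal{F}}_{0}^{\bfY}$-measurable, it is correlated with the future windows $Y_{(2u-1)m+\ell+1:(2u+1)m+\ell}$ through the mixing of $\bfY$: conditionally on $\boldsymbol{\param}$ the law of these windows is \emph{not} the stationary law, so your opening claim that each summand is centered given $\boldsymbol{\param}$ is false --- the centering term $\CE[]{}{}{\smoothfunc{\chi,-m}{\upsilon,0,m}(S,\bfY)}_{\upsilon=\boldsymbol{\param}}$ is an \emph{unconditional} expectation evaluated at a random point, not a conditional expectation. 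The paper's first move is precisely to repair this: Berbee's lemma is applied to $\boldsymbol{\param}$ \emph{itself}, producing a copy $\boldsymbol{\upsilon}^{\star}$ independent of $\mathcal{G}_{m}^{\bfY}$ with mismatch probability $O(\mix^{m})$, and the $k\mix^{m\Delta p}$ term in the statement is exactly the H\"older cost of substituting $\boldsymbol{\upsilon}^{\star}$ for $\boldsymbol{\param}$ in the $k$ functionals and in the centering --- not, as in your accounting, the cost of coupling the observation blocks. Only after this substitution may one condition on the parameter and treat it as fixed.

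Second, and more seriously, your scheme cannot deliver the factor $\sqrt{k/m}$, and you leave precisely this step unresolved (you flag the ``second-moment estimate for a single block'' as the main obstacle --- correctly, for it is the heart of the lemma). A single $\kappa_{L,m}^{\chi}$ is \emph{one} smoothed statistic minus a mean, not an average of $m$ terms: its variance is $O(1)$ and does not shrink as $m$ grows (by Proposition~\ref{BOEM:prop:expforget} it converges to a nondegenerate limiting functional). Hence once you have coupled the blocks into genuinely independent copies, Rosenthal's inequality yields at best $\sqrt{k}\,\lpnorm{\kappa}{2}$, i.e.\ a $\sqrt{k}$ bound, and no amount of forgetting or $\rmL_{\bar p}$-integrability applied blockwise can manufacture the missing $m^{-1/2}$; the gain is not a per-block property. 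The paper obtains it by an entirely different mechanism: after the parameter decoupling it applies Rio's moment inequality for strongly mixing sequences (\cite[Theorem~2.5]{rio:1999}) \emph{directly} to $\sum_{u}\kappa_{L_{u},m}(\upsilon,\bfY)$ --- no block coupling, no Rosenthal --- expressing the bound through quantile functions and the counting function $N_{(m)}(t)$ of the mixing coefficients $\mix[a]^{\bfY}(2(i-1)m)$ of the sparse sequence, which is of order $\log(1/t)/m$; the $m^{-1/2}$ is thus extracted from the dependence structure of the whole sum. Two further points: your preliminary trimming via the forgetting property injects an extra $k\rho^{m}$-type term that is absent from the claimed bound and is never absorbed (it is not dominated by $k\mix^{m\Delta p}$ for arbitrary $\rho$ and $\mix$), whereas the paper needs no trimming since Rio's inequality tolerates adjacent windows; and your heuristic that windows at indices $\geq m+\ell+1$ make the parameter harmless is exactly what gets quantified, at price $k\mix^{m\Delta p}$, by the Berbee step you skipped.
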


\begin{proof}
  For ease of notation $\chi$ is dropped from the notation
  $\kappa_{2um,m}^{\chi}$. By the Berbee Lemma (see \cite[Chapter
  $5$]{rio:1999}), for any $m\geq 1$, there exists a $\paramset$-valued r.v.
  $\boldsymbol{\upsilon}^{\star}$ on $(\Omega,\mathcal{F},\PPim)$ independent
  from $\mathcal{G}_{m}^{\bfY}$ (see \eqref{BOEM:eq:Zfield}) s.t.
\begin{equation}
\label{BOEM:eq:berb}
\PP_{}\left\{\boldsymbol{\param}\neq\boldsymbol{\upsilon}^{\star}\right\} =\underset{B\in\mathcal{G}_{m}^{\bfY}}{\sup}\,|\PPim(B| \sigma(\boldsymbol{\param})) - \PPim(B)|\eqsp.
\end{equation}
Set $L_{u} \eqdef 2um+\ell$. We write
\begin{multline}
  \sum_{u=1}^{k} \kappa_{L_{u},m}(\boldsymbol{\param},\bfY)= \sum_{u=1}^{k}
  \left\{\smoothfunc{\chi,L_{u}-m}{\boldsymbol{\param},L_{u},L_{u}+m}(S,\bfY) -
    \smoothfunc{\chi,L_{u}-m}{\boldsymbol{\upsilon}^{\star},L_{u},L_{u}+m}(S,\bfY)\right\}
  \\+ \sum_{u=1}^{k} \kappa_{L_{u},m}(\boldsymbol{\upsilon}^{\star},\bfY) +
  k\left\{\CE[]{}{}{\smoothfunc{\chi,-m}{\upsilon,0,m}(S,\bfY)}_{\upsilon
      =
      \boldsymbol{\upsilon}^{\star}}-\CE[]{}{}{\smoothfunc{\chi,-m}{\upsilon,0,m}(S,\bfY)}_{\upsilon
      = \boldsymbol{\param}}\right\}\label{BOEM:eq:sumkappa}\eqsp.
\end{multline}
By the Holder's inequality with $a\eqdef \bar{p}/p$ and $b^{-1}\eqdef 1 - a^{-1}$,
\begin{multline*}
  \lpnorm{\smoothfunc{\chi,L-m}{\boldsymbol{\param},L,L+m}(S,\bfY) -
    \smoothfunc{\chi,L-m}{\boldsymbol{\upsilon}^{\star},L,L+m}(S,\bfY)}{p} \\
  \leq \lpnorm{\smoothfunc{\chi,L-m}{\boldsymbol{\param},L,L+m}(S,\shift^{T}
     \bfY) -
    \smoothfunc{\chi,L-m}{\boldsymbol{\upsilon}^{\star},L,L+m}(S,\bfY)}{\bar
    p}
  \PP_{}\left\{\boldsymbol{\param}\neq\boldsymbol{\upsilon}^{\star}\right\}^{\Delta
    p}\eqsp.
\end{multline*}
By A\ref{BOEM:assum:moment:sup}-($\bar p$), A\ref{BOEM:assum:obs}, \eqref{BOEM:eq:def:mixing} and \eqref{BOEM:eq:berb}, there exists a constant $C_{1}$ s.t. for any $m,L\geq
1$, any distribution $\chi$ and any $\paramset$-valued
$\widetilde{\mathcal{F}}_{0}^{\bfY}$-measurable r.v.  $\boldsymbol{\param}$,
\begin{equation*}
\lpnorm{\smoothfunc{\chi,L-m}{\boldsymbol{\param},L,L+m}(S,\bfY) - \smoothfunc{\chi,L-m}{\boldsymbol{\upsilon}^{\star},L,L+m}(S,\bfY)}{\bar p}
    \leq C_{1} \mix^{m \Delta p}\eqsp.
    \end{equation*}
    Similarly, there exists a constant $C_{2}$ s.t. for any $m\geq 1$, any
    distribution $\chi$ and any $\paramset$-valued
    $\widetilde{\mathcal{F}}_{0}^{\bfY}$-measurable r.v.  $\boldsymbol{\param}$,
\begin{equation*}
\lpnorm{\CE[]{}{}{\smoothfunc{\chi,-m}{\upsilon,0,m}(S,\bfY)}_{\upsilon =
      \boldsymbol{\upsilon}^{\star}}-\CE[]{}{}{\smoothfunc{\chi,-m}{\upsilon,0,m}(S,\bfY)}_{\upsilon =
      \boldsymbol{\param}}}{p} \leq C_{2}  \mix^{m \Delta p}\eqsp.
      \end{equation*}
      Let us consider the second term in \eqref{BOEM:eq:sumkappa}. For any $u\geq 1$
      and any $\upsilon\in\paramset$, the r.v.
      $\kappa_{L_{u},m}(\upsilon,\bfY)$ is a measurable function of $\bfY_{i}$
      for all $L_{u}-m+1\leq i\leq L_{u}+m$. Since $L_{u}\geq 2um$, for any
      $\upsilon\in\paramset$, $\sum_{u=1}^{k} \kappa_{L_{u},m}(\upsilon,\bfY)$
      is $\mathcal{G}_{m}^{\bfY}$-measurable. $\boldsymbol{\upsilon}^{\star}$ is
      independent from $\mathcal{G}_{m}^{\bfY}$ so that:
   \begin{equation*}
\lpnorm{\sum_{u=1}^{k} \kappa_{L_{u},m}(\boldsymbol{\upsilon}^{\star},\bfY)}{p}=\CE[]{}{}{\CE[]{}{}{\left|\sum_{u=1}^{k}  \kappa_{L_{u},m}(\upsilon,\bfY)\right|^{p}}_{\upsilon=\boldsymbol{\upsilon}^{\star}}}^{1/p}\eqsp.
\end{equation*}
Define the strong mixing coefficient (see \cite{davidson:1994})
\[
\mix[a]^{\bfY}(r)
\eqdef\underset{u\in\Zset}{\sup}\,\underset{(A,B)\in\mathcal{F}_{u}^{\bfY}\times\mathcal{G}_{u+r}^{\bfY}}{\sup}\,|\PPim(A\cap
B) - \PPim(A)\PPim(B)| \eqsp, r \geq 0 \eqsp.
\] Then,
\cite[Theorem $14.1$, p.210]{davidson:1994} implies that for any $m \geq 1$,
the strong mixing coefficients of the sequence ${\bf
  \kappa_{(m)}}\eqdef\{\kappa_{L_{u},m}(\upsilon,\bfY)\}_{u\geq 1}$ satisfies
$\mix[a]^{{\bf \kappa_{(m)}}}(i) \leq \mix[a]^{\bfY}(2(i-1)m)$. Furthermore, by
\cite[Theorem 2.5]{rio:1999},
\[
\lpnorm{\sum_{u=1}^{k} \kappa_{L_{u},m}(\upsilon,\bfY)}{p}\leq
(2kp)^{1/2} \left(\int_{0}^{1}\left[N_{(m)}(t)\wedge k\right]^{p/2}
  \mathcal{Q}_{\upsilon,m}^{p}(t)\rmd t \right)^{1/p}\eqsp,
\]
where $N_{(m)}(t)\eqdef \sum_{i\geq 1} \1_{\alpha^{{\bf \kappa_{(m)}}}(i)>t}$
and $\mathcal{Q}_{\upsilon,m}$ denotes the inverse of the tail function
$t\mapsto \PPim(|\kappa_{L_{u},m}(\upsilon,\bfY)|\geq t)$. The sequence $\bfY$
being stationary, this inverse function does not depend on $u$. By
A\ref{BOEM:assum:obs} and the inequality $\mix[a]^{\bfY}(r)\leq
\mix^{\bfY}(r)$ (see e.g. \cite[Chapter $13$]{davidson:1994}), there exist
$\mix\in[0,1)$ and $C\in(0,1)$ s.t. for any $u,m\geq 1$,
\[
N_{(m)}(u)\leq \sum_{i\geq 1} \1_{\mix[a]^{\bfY}(2(i-1)m)>u} \leq\sum_{i\geq 1}\1_{C\mix^{2(i-1)m}>u}\leq\left(\frac{\log u-\log C}{2m\log \mix}\right)\vee 0\eqsp.
\]
Let $U$ be a uniform r.v. on $[0,1]$. Observe that $C\mix^{2mk}<1$.  Then, by
the Holder inequality applied with $a\eqdef \bar{p}/p$ and $b^{-1}\eqdef 1 -
a^{-1}$,
\begin{align*}
  & \left\| \left[N_{(m)}(U)\wedge k\right]^{1/2} \mathcal{Q}_{\upsilon,m}(U) \right\|_p \eqdef  \left(\int_{0}^{1}\left[N_{(m)}(u)\wedge k\right]^{p/2} \mathcal{Q}_{\upsilon,m}^{p}(u)\rmd u \right)^{1/p}  \\
  &\leq  \left[\frac{-1}{2m\log \mix}\right]^{1/2 }\left\|\mathcal{Q}_{\upsilon,m}(U)\left(-\log \frac{U}{C}\right)^{1/2}\1_{(C\beta^{Cmk},C)}(U)\right\|_p\\
  &\hspace{7cm}+ k^{1/2} \left\| \mathcal{Q}_{\upsilon,m}(U) \1_{U\leq C\mix^{2mk}}  \right\|_p\eqsp,\\
  &\leq \left\{(C\mix^{2mk})^{\Delta p}k^{1/2} + \left[\frac{-1}{2m\log
        \mix}\right]^{1/2} \left\| \left(-\log \frac{U}{C}\right)^{1/2} \1_{(C\beta^{Cmk},C)}(U)
    \right\|_{p b}\right\}\\
    &\hspace{7cm}\times\left\| \mathcal{Q}_{\upsilon,m}(U)\right\|_{\bar p}\eqsp.
\end{align*}
Since $U$ is uniform on $[0,1]$, $\mathcal{Q}_{\upsilon,m}(U)$ and $|\kappa_{L_{u},m}(\upsilon,\bfY)|$ have
the same distribution, see \cite{rio:1999}. Then, by Lemma~\ref{BOEM:lem:bound-phi} and
A\ref{BOEM:assum:moment:sup}-($\bar p$), there exists a constant $C$ s.t. for any
$\upsilon\in\paramset$, any $m\geq1$,
\[
\underset{\upsilon\in\paramset}{\sup}\;  \left\|\mathcal{Q}_{\upsilon,m}(U)
\right\|_{\bar p}
\leq C\,\lpnorm{\sup_{x,x' \in \Xset^2} \,
  |S(x,x',\bfY_{0})}{\bar p}\eqsp,
\]
which concludes the proof.
\end{proof}

\begin{lemma}
\label{BOEM:lem:lpcontrol:dec:rho}
Assume A\ref{BOEM:assum:strong},  A\ref{BOEM:assum:moment:sup}-($\bar p$) and A\ref{BOEM:assum:obs} for some $\bar p>2$. Let $p\in(2,\bar p)$. There exists a constant $C$ s.t.  for any $n\geq 1$, any $1\leq m_{n}\leq \tau_{n+1}$ and any distribution $\chi$ on $(\Xset,\sigmaX)$,
\begin{equation*}
\lpnorm{\frac{1}{\tau_{n+1}}\sum_{t=2m_n}^{2v_{n}m_{n}} \kappa_{t,m_{n}}^{\chi}(\param_{n},\shift^{T_{n}}  \bfY)}{p}\leq C\left[\frac{1}{\sqrt{\tau_{n+1}}} + \mix^{m_{n}\Delta p}\right]\eqsp,
\end{equation*}
where $\kappa_{L,m}^{\chi}$ and $\mix$ are defined by \eqref{BOEM:eq:def:kappa} and
A\ref{BOEM:assum:obs}, $v_{n}\eqdef
\left\lfloor\frac{\tau_{n+1}}{2m_{n}}\right\rfloor$ and $\Delta p \eqdef
\frac{\bar p - p}{p\bar p}$.
\end{lemma}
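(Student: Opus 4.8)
The plan is to deduce the bound from Lemma~\ref{BOEM:lem:lpcontrol} by a blocking argument that converts the sum over \emph{consecutive} indices into finitely many sums whose $\kappa$-indices advance in steps of $2m_n$. Lemma~\ref{BOEM:lem:lpcontrol} controls $\lpnorm{\sum_{u=1}^{k}\kappa_{2um+\ell,m}^{\chi}(\boldsymbol{\param},\bfY)}{p}$, i.e. sums in which the first index of $\kappa$ jumps by $2m$, whereas here the index $t$ runs over all integers in $\{2m_n,\dots,2v_nm_n\}$. First I would write each such $t$ uniquely as $t=2um_n+\ell$ with $\ell\in\{0,\dots,2m_n-1\}$ and $u\geq 1$, and regroup
\[
\sum_{t=2m_n}^{2v_nm_n}\kappa_{t,m_n}^{\chi}(\param_n,\shift^{T_n}\bfY)=\sum_{\ell=0}^{2m_n-1}\sum_{u}\kappa_{2um_n+\ell,m_n}^{\chi}(\param_n,\shift^{T_n}\bfY)\eqsp,
\]
where, for each fixed $\ell$, the inner sum starts at $u=1$ (the smallest admissible index, since $t\geq 2m_n$) and has at most $v_n$ terms (since $t\leq 2v_nm_n$).

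Next, by the Minkowski inequality the $\rmL_p$-norm of the left-hand side is at most $\sum_{\ell=0}^{2m_n-1}$ of the $\rmL_p$-norms of the inner sums, and to each inner sum I would apply Lemma~\ref{BOEM:lem:lpcontrol}. The step requiring care is the presence of the shift $\shift^{T_n}$ together with the random parameter $\param_n$: since $\param_n$ is $\widetilde{\mathcal{F}}_{T_n}$-measurable, it is $\widetilde{\mathcal{F}}_0^{\bfY}$-measurable \emph{relative to the re-centred process} $\bfY'\eqdef\shift^{T_n}\bfY$, and by the stationarity of $\bfY$ (A\ref{BOEM:assum:obs}) the process $\bfY'$ has the same law as $\bfY$. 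Hence Lemma~\ref{BOEM:lem:lpcontrol} applies verbatim, with the same constant $C$, with $m=m_n$, the given residue $\ell$, and $k\leq v_n$, yielding for every $\ell$
\[
\lpnorm{\sum_{u}\kappa_{2um_n+\ell,m_n}^{\chi}(\param_n,\shift^{T_n}\bfY)}{p}\leq C\left[\sqrt{\frac{v_n}{m_n}}+v_n\,\mix^{m_n\Delta p}\right]\eqsp.
\]

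Finally I would sum these $2m_n$ identical estimates and use $v_n=\lfloor\tau_{n+1}/(2m_n)\rfloor$, so that $2m_nv_n\leq\tau_{n+1}$ and therefore $2m_n\sqrt{v_n/m_n}=2\sqrt{m_nv_n}\leq\sqrt{2\tau_{n+1}}$ for the first term and $2m_nv_n\,\mix^{m_n\Delta p}\leq\tau_{n+1}\,\mix^{m_n\Delta p}$ for the second. Dividing through by $\tau_{n+1}$ then gives exactly $C[\tau_{n+1}^{-1/2}+\mix^{m_n\Delta p}]$, as claimed. The only genuinely delicate point in this plan is the measurability-and-stationarity bookkeeping that legitimizes applying Lemma~\ref{BOEM:lem:lpcontrol} to the shifted process $\shift^{T_n}\bfY$ with $\param_n$ in the role of the $\widetilde{\mathcal{F}}_0^{\bfY}$-measurable random parameter; the remainder is a routine reindexing together with Minkowski's inequality and the arithmetic above.
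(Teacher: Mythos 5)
Your proposal is correct and follows essentially the same route as the paper's proof: the same reindexing $t=2um_n+\ell$ into $2m_n$ residue classes, Minkowski's inequality, an application of Lemma~\ref{BOEM:lem:lpcontrol} to each class (justified by the $\widetilde{\mathcal{F}}_{T_n}$-measurability of $\param_n$ relative to the shifted process), and the final arithmetic via $2m_nv_n\leq\tau_{n+1}$. Your explicit treatment of the shift/stationarity bookkeeping is a point the paper leaves implicit, but the argument is the same.
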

\begin{proof}
We write,
\[
\lpnorm{\sum_{t=2m_n}^{2v_{n}m_{n}}
  \kappa_{t,m_{n}}^{\chi}(\param_{n},\shift^{T_{n}} \bfY)}{p} \leq
\sum_{\ell=0}^{2m_{n}-1}\lpnorm{ \sum_{u=1}^{v_{n}-1}
  \kappa_{2um_{n}+\ell,m_{n}}^{\chi}(\param_{n},\shift^{T_{n}} \bfY)}{p}\eqsp.
\]
Observe that by definition $\param_{n}$ is $\widetilde{\mathcal{F}}_{T_{n}}^{\bfY}$-measurable. Then, by
Lemma~\ref{BOEM:lem:lpcontrol}, there exists a constant $C$ s.t. for any $m_{n}\geq
1$ and any $\ell\geq 0$,
\[
\lpnorm{ \sum_{u=1}^{v_{n}-1}
  \kappa_{2um_{n}+\ell,m_{n}}^{\chi}(\param_{n},\shift^{T_{n}} \bfY)}{p}\leq
C\left[\sqrt{\frac{v_{n}}{m_{n}}} + v_{n}\beta^{m_{n}\Delta p}\right]\eqsp.
\]
The proof is concluded upon noting that  $\tau_{n+1}\geq 2m_{n}v_{n}$.
\end{proof}

\begin{lemma}
\label{BOEM:lem:lpcontrol:dec}
Assume A\ref{BOEM:assum:strong}, A\ref{BOEM:assum:moment:sup}-($\bar p$) and
A\ref{BOEM:assum:obs} for some $\bar p >2$. For any $p \in (2,
\bar p]$, there exists a constant $C$ s.t. for any $n\geq 1$, any $1\leq
m_{n}\leq q_{n}\leq \tau_{n+1}$ and any distribution $\chi$ on
$(\Xset,\sigmaX)$,
\begin{equation*}
\lpnorm{\bar S_{\tau_{n+1}}^{\chi,T_{n}}(\param_{n}, \bfY)-\mapS(\param_n)-\widetilde{\rho}_{n}}{p}
\leq C\left[ \rho^{m_{n}} + \frac{m_{n}}{\tau_{n+1}}+\frac{\tau_{n+1}-q_{n}}{\tau_{n+1}}\right]\eqsp,
\end{equation*}
where $\widetilde{\rho}_{n} \eqdef \tau_{n+1}^{-1}\sum_{t=2m_n}^{q_{n}} \kappa_{t,m_{n}}^{\chi}(\param_{n},\shift^{T_{n}}\bfY)$
and $\kappa_{L,m}^{\chi}$ is defined by \eqref{BOEM:eq:def:kappa}.
\end{lemma}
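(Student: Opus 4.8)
The plan is to turn the left-hand side into a four-term decomposition and control each piece by combining the forgetting property with moment bounds. I would start from the shift representation $\bar S_{\tau_{n+1}}^{\chi,T_{n}}(\param_{n},\bfY)=\tau_{n+1}^{-1}\sum_{t=1}^{\tau_{n+1}}\smoothfunc{\chi,0}{\param_{n},t,\tau_{n+1}}(S,\shift^{T_{n}}\bfY)$, obtained from \eqref{BOEM:eq:rewrite:barS:phi} exactly as in the proof of Theorem~\ref{BOEM:th:LGN}\ref{BOEM:th:LGN:continuous}, and expand $\widetilde\rho_n$ through the definition \eqref{BOEM:eq:def:kappa}, writing $g(\param)\eqdef\CE[]{}{}{\smoothfunc{\chi,-m_{n}}{\upsilon,0,m_{n}}(S,\bfY)}_{\upsilon=\param}$ for the centering term. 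Splitting the sum defining $\bar S$ into the \emph{bulk} range $t\in\{2m_n,\dots,q_n\}$ (which matches the summation range of $\widetilde\rho_n$) and the \emph{boundary} range $\{1,\dots,2m_n-1\}\cup\{q_n+1,\dots,\tau_{n+1}\}$, one gets $\bar S_{\tau_{n+1}}^{\chi,T_{n}}(\param_{n},\bfY)-\mapS(\param_n)-\widetilde\rho_n=I+II+III+IV$, where $I=\tau_{n+1}^{-1}\sum_{t=2m_n}^{q_n}\{\smoothfunc{\chi,0}{\param_n,t,\tau_{n+1}}(S,\shift^{T_n}\bfY)-\smoothfunc{\chi,t-m_n}{\param_n,t,t+m_n}(S,\shift^{T_n}\bfY)\}$ is the forgetting error, $II$ is the boundary sum of the full-interval smoothers, $III=(N/\tau_{n+1}-1)\mapS(\param_n)$ and $IV=(N/\tau_{n+1})\{g(\param_n)-\mapS(\param_n)\}$, with $N\eqdef q_n-2m_n+1$ the number of bulk terms.

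The three easy terms are $II$, $III$, $IV$. For $II$, Lemma~\ref{BOEM:lem:bound-phi} bounds each summand pointwise by $\sup_{x,x'}|S(x,x',Y_{T_n+t})|$, so stationarity and A\ref{BOEM:assum:moment:sup}-($\bar p$) (which ensures $\lpnorm{\sup_{x,x'}|S(x,x',Y_0)|}{p}<\infty$ for $p\le\bar p$) give $\lpnorm{II}{p}\le C\,(2m_n-1+\tau_{n+1}-q_n)/\tau_{n+1}$. For $III$, I would use that $|\mapS(\param)|\le\CE[]{}{}{\sup_{x,x'}|S(x,x',Y_0)|}$ uniformly in $\param$ (from \eqref{BOEM:eq:mapS} and $\sup_\param|\limE{\param}{\bfY}|\le\sup_{x,x'}|S(x,x',Y_0)|$) together with $|N/\tau_{n+1}-1|\le(2m_n-1+\tau_{n+1}-q_n)/\tau_{n+1}$, giving the same order. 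For $IV$, the key is a uniform deterministic bound on $g(\param)-\mapS(\param)$: applying Proposition~\ref{BOEM:prop:expforget}\ref{BOEM:item:forget-limit} with $h=S$, $s=0$, $r=-m_n$, $t=m_n$ yields $\sup_\param|\smoothfunc{\chi,-m_{n}}{\param,0,m_{n}}(S,\bfY)-\smoothfunc{}{\param}(S,\bfY)|\le(\rho^{m_n-1}+\rho^{m_n})\osc(S_0)$; taking expectations and recalling $\mapS(\param)=\CE[]{}{}{\smoothfunc{}{\param}(S,\bfY)}$ gives $\sup_\param|g(\param)-\mapS(\param)|\le C\rho^{m_n}$, hence $\lpnorm{IV}{p}\le C\rho^{m_n}$ since $N/\tau_{n+1}\le1$.

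The main term is $I$. For an index $t$ in the bulk satisfying additionally $t+m_n\le\tau_{n+1}$, the short window $[t-m_n,t+m_n]$ sits inside $[0,\tau_{n+1}]$, so Proposition~\ref{BOEM:prop:expforget}\ref{BOEM:item:forget-nu-pv} applied with $\widetilde\chi=\chi$, $\ell_1=t-m_n\ge1$ and $\ell_2=\tau_{n+1}-t-m_n\ge0$ (using the $\ell_2=0$ case recorded in Remark~\ref{BOEM:rem:lgn}) bounds the summand pointwise by $(\rho^{m_n-1}+\rho^{m_n})\osc(S_t)\le C\rho^{m_n}\osc(S_t)$. Summing, dividing by $\tau_{n+1}$, and using that there are at most $\tau_{n+1}$ such terms together with $\lpnorm{\osc(S_t)}{p}=\lpnorm{\osc(S_0)}{p}<\infty$ gives a contribution of order $\rho^{m_n}$.

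The hard part is precisely that the nesting $t+m_n\le\tau_{n+1}$ may fail: the hypothesis only provides $q_n\le\tau_{n+1}$, so for $t\in(\tau_{n+1}-m_n,q_n]$ the short window pokes past $\tau_{n+1}$ and the forgetting bound is genuinely unavailable (there the relevant factor $\rho^{\tau_{n+1}-t}$ is $O(1)$, not small). These offending indices number at most $m_n$, and I would bound each corresponding summand crudely: by Lemma~\ref{BOEM:lem:bound-phi} both smoothers are dominated pointwise by $\sup_{x,x'}|S(x,x',Y_{T_n+t})|$, so each contributes at most $2\lpnorm{\sup_{x,x'}|S(x,x',Y_0)|}{p}$ in $\rmL_p$, and the whole block contributes at most $C\,m_n/\tau_{n+1}$. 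Collecting $I$, $II$, $III$, $IV$ then yields $\lpnorm{\bar S_{\tau_{n+1}}^{\chi,T_{n}}(\param_{n},\bfY)-\mapS(\param_n)-\widetilde\rho_n}{p}\le C[\rho^{m_n}+m_n/\tau_{n+1}+(\tau_{n+1}-q_n)/\tau_{n+1}]$, the claimed estimate.
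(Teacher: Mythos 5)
Your proof is correct and follows essentially the same route as the paper's: the paper uses the same four-ingredient decomposition (its $g_{1,n}$--$g_{4,n}$: replacement of the full-window smoothers by short windows via Proposition~\ref{BOEM:prop:expforget} and Remark~\ref{BOEM:rem:lgn}, boundary blocks of length $2m_n-1$ and $\tau_{n+1}-q_n$ bounded termwise through Lemma~\ref{BOEM:lem:bound-phi}, stationarity and A\ref{BOEM:assum:moment:sup}-($\bar p$), and the bias $\CE[]{}{}{\smoothfunc{\chi,-m_{n}}{\param,0,m_{n}}(S,\bfY)}_{\param=\param_n}-\mapS(\param_n)$ controlled by $C\rho^{m_n}$), differing from yours only in bookkeeping: its boundary terms carry the short-window smoothers already centered at $\CE[]{}{}{\smoothfunc{\chi,-m_{n}}{\param,0,m_{n}}(S,\bfY)}_{\param=\param_n}$, so your counting term $III$ is absorbed. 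One minor inaccuracy, harmless to your argument: for $t\in(\tau_{n+1}-m_n,q_n]$ the forgetting estimate is not \emph{genuinely unavailable} --- the paper passes through the intermediate window $[t-m_n,\tau_{n+1}]$ and applies both parts of Remark~\ref{BOEM:rem:lgn} to obtain the factor $\rho^{m_n-1}+\rho^{\tau_{n+1}-t}$, whose geometric tail sums to $O(1)$ and hence contributes only $O(\tau_{n+1}^{-1})$ --- whereas your crude termwise bound on those at most $m_n$ indices gives the slightly weaker $O(m_n/\tau_{n+1})$, which still lands inside the stated estimate since the term $m_n/\tau_{n+1}$ is already present in the right-hand side.
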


\begin{proof}
  By  \eqref{BOEM:eq:rewrite:barS} and \eqref{BOEM:eq:define-Phi}, $ \bar
  S_{\tau_{n+1}}^{\chi,T_{n}}(\param_{n}, \bfY)-\mapS(\param_n)
  -\widetilde{\rho}_{n} = \sum_{i=1}^4 g_{i,n}$
where
\begin{align*}
  g_{1,n} & \eqdef
  \frac{1}{\tau_{n+1}}\sum_{t=1}^{\tau_{n+1}}\left(\smoothfunc{\chi,0}{\param_{n},t,\tau_{n+1}}(S,\shift^{T_n}
   \bfY)
  -\smoothfunc{\chi,t-m_n}{\param_{n},t,t+m_n}(S,\shift^{T_n}
   \bfY)\right)   \eqsp, \\
  g_{2,n} &\eqdef \frac{1}{\tau_{n+1}}\sum_{t=1}^{2m_n-1} \left(
    \smoothfunc{\chi,t-m_n}{\param_{n},t,t+m_n}(S,\shift^{T_n}  \bfY) -
    \CE[]{}{}{\smoothfunc{\chi,-m_{n}}{\param,0,m_{n}}(S,\bfY)}_{\param =
      \param_n} \right)\eqsp,\\
  g_{3,n} &\eqdef
  \frac{1}{\tau_{n+1}}\sum_{t=q_{n}+1}^{\tau_{n+1}}\left(\smoothfunc{\chi,t-m_n}{\param_{n},t,t+m_n}(S,\shift^{T_n}
     \bfY) -
    \CE[]{}{}{\smoothfunc{\chi,-m_{n}}{\param,0,m_{n}}(S,\bfY)}_{\param =
      \param_n}\right)\eqsp, \\
  g_{4,n} & \eqdef
  \CE[]{}{}{\smoothfunc{\chi,-m_{n}}{\param,0,m_{n}}(S,\bfY)}_{\param =
    \param_n} - \mapS(\param_n) \eqsp.
\end{align*}
In the case $\tau_{n+1} > 2 m_n$, it holds
\begin{align*}
  & \tau_{n+1} \left| g_{1,n} \right| \leq
  \sum_{t=\tau_{n+1}-m_n+1}^{\tau_{n+1}} \left( \rho^{m_n-1} +
    \rho^{\tau_{n+1}-t}\right) \osc(S_{t+T_n}) \\
  &\hspace{2cm} + \sum_{t=1}^{m_n} \left(\rho^{m_n} + \rho^{t-1} \right)
  \osc(S_{t+T_n}) +\;2 \rho^{m_n-1}
  \sum_{t=m_n+1}^{\tau_{n+1}-m_n}
  \osc(S_{t+T_n})\eqsp,
\end{align*}
where we used Proposition~\ref{BOEM:prop:expforget}\eqref{BOEM:item:forget-nu-pv} and
Remark~\ref{BOEM:rem:lgn}  in the last inequality. By A\ref{BOEM:assum:moment:sup}-($\bar
p$) and A\ref{BOEM:assum:obs}, there exists $C$ s.t.
$\lpnorm{g_{1,n}}{p} \leq C \left( \rho^{m_n} + \tau_{n+1}^{-1}\right)$. The same bound hold in the case $\tau_{n+1} \leq 2 m_n$.  For $g_{2,n}$ and $g_{3,n}$, we use the bounds
\begin{align*}
  \left|\smoothfunc{\chi,t-m_n}{\param_{n},t,t+m_n}(S,\shift^{T_n}  \bfY)
    - \CE[]{}{}{\smoothfunc{\chi,-m_{n}}{\param,0,m_{n}}(S,\bfY)}_{\param
      =
      \param_n} \right|&\\
  &\hspace{-6cm}\leq \underset{(x,x')\in\Xset^{2}}{\sup}\left|S(x,x',Y_{T_{n}+t})\right|+
  \CE[]{}{}{\underset{(x,x')\in\Xset^{2}}{\sup}\left|S(x,x',Y_{0})\right|}\eqsp.
\end{align*}Then, by
A\ref{BOEM:assum:obs},

\begin{multline*}
  \lpnorm{\smoothfunc{\chi,t-m_n}{\param_{n},t,t+m_n}(S,\shift^{T_n}
    \bfY) -
    \CE[]{}{}{\smoothfunc{\chi,-m_{n}}{\param,0,m_{n}}(S,\bfY)}_{\param =
      \param_n}}{p}\\
  \leq 2 \lpnorm{\underset{(x,x')\in\Xset^{2}}{\sup}\left|S(x,x',Y_{0})
    \right|}{p}\eqsp,
\end{multline*}
and the RHS is finite under A\ref{BOEM:assum:moment:sup}-($\bar p$). Finally,
\begin{equation*}
  \left|g_{4,n} \right| \leq 2 \rho^{m_n-1}\CE[]{}{}{\osc(S_{0})} \eqsp,
\end{equation*}
where we used Theorem~\ref{BOEM:th:LGN}. This concludes the
proof.
\end{proof}

\bibliographystyle{plain}
\bibliography{./onlineblock}

\end{document}